\newtheorem{thm}{Theorem}[section]
\newtheorem{lem}[thm]{Lemma}
\newtheorem{prop}[thm]{Proposition}
\newtheorem{defn}[thm]{Definition}
\newtheorem{rem}[thm]{Remark}
\newcommand{\norm}[1]{\left\Vert#1\right\Vert}
\newcommand{\abs}[1]{\left\vert#1\right\vert}
\newcommand{\set}[1]{\left\{#1\right\}}
\newcommand{\Real}{\mathbb R}
\newcommand{\pfrac}[2]{\frac{\partial #1}{\partial #2}}
\definecolor{darkgreen}{rgb}{0,0.5,0}
\definecolor{darkred}{rgb}{0.7,0,0}
\begin{document}

\title{Schauder estimates on smooth and singular spaces}

\author{Yaoting Gui and Hao Yin}

\begin{abstract}
In this paper, we present a proof of Schauder estimate on Euclidean space and use it to generalize Donaldson's Schauder estimate on space with conical singularities in the following two directions. The first is that we allow the total cone angle to be larger than 2$\pi$ and the second is that we discuss higher order estimates.
\end{abstract}
\address{Hao Yin,  School of Mathematical Sciences,
University of Science and Technology of China, Hefei, China}
\email{haoyin@ustc.edu.cn }
\address{Yaoting Gui,  School of Mathematical Sciences,
University of Science and Technology of China, Hefei, China}
\email{vigney@mail.ustc.edu.cn }
\maketitle
\tableofcontents

\section{Introduction}
In this paper, we discuss the classical Schauder estimate on Euclidean space $\Real^n$ and on some singular space with conical type singularities. The discussion contained in this paper should apply with minor modification to a class of conical type singular spaces, however, for simplicity, we restrict ourselves to a special case, namely, $\Real^{2}\times \Real^{n-2}$ with a singular metric
\begin{equation}\label{eqn:metric1}
	g_\beta= \abs{z}^{2\beta-2} dz^2 + d\xi^2, \quad \beta>0.
\end{equation}
where $z$ is in $\mathbb C$ (identified with $\Real^2$) and $\xi$ is in $\Real^{n-2}$. The geometry is nothing but the product of a 2-dimensional cone with cone angle $2\pi \beta$ and $\Real^{n-2}$. Our attention is drawn to this space because of the recent study of conical K\"ahler geometry proposed by \cite{tian1996kahler} and \cite{donaldson2012}. In the rest of this paper, we denote this space together with the metric by $X_\beta$.

We shall restrict ourselves to the interior estimates only and hopefully, the boundary value problem will be discussed in the future. Hence by Schauder estimate in $\Real^n$, we mean the inequality
\begin{equation*}
	\norm{u}_{C^{2,\alpha}(B_{1/2})}\leq C(n,\alpha) (\norm{f}_{C^\alpha(B_1)}+ \norm{u}_{C^0(B_1)})
\end{equation*}
if $\triangle u =f$ on $B_1$. If $f$ is in $C^{k,\alpha}$ for $k\in \mathbb N$, we can bound $\norm{u}_{C^{k+2,\alpha}}$ by successively taking derivatives and applying the above $C^{2,\alpha}$ estimate.

Besides the classical proof of potential theory, there are many different proofs by Campanato \cite{campanato}, Peetre \cite{peetre}, Trudinger \cite{trudinger}, Simon \cite{simon}, Safonov \cite{safonov1, safonov2}, Caffarelli \cite{caffarelli,CC} and Wang \cite{wang2006}. We refer the readers to \cite{wang2006} for brief comments on these proofs. Many of the above proofs have important applications to the study of nonlinear (or even fully nonlinear) elliptic and parabolic equations. The proof given below is motivated by the study of regularity problem on spaces with conical singularity. The ideas used here are related to the above mentioned proofs, for example, we shall use a characterization of H\"older continuous function known to Campanato and we shall compare the solution to polynomials as Caffarelli did in \cite{caffarelli,CC}. Moreover, the idea of pointwise Schauder estimate, due to Han \cite{han1998,han2000}, is particularly useful and effective for conical singularities. In the first part of this paper, we give a proof of the Schauder estimate on $\Real^n$. The proof is by far not as simple as the above mentioned ones. We need it, first to illustrate the basic idea of this paper, and second to prove some theorem that will be needed for the proof of Schauder estimate on $X_\beta$.

We start with an equivalent formulation of the H\"older space and the H\"older norm on $\Real^n$. Thanks to the Taylor expansion theorem, if $u$ is $C^{k,\alpha}$ in a neighborhood of $x$, then there exists a polynomial $P_x$ of degree $k$ such that
\begin{equation*}
	u(x+h)= P_x(h)+ O_x(\abs{h}^{k+\alpha}),\qquad \text{for} \quad \abs{h}<\delta_x.
\end{equation*}
It is natural to ask about the reverse: if a function $u$ has the above expansion around each point $x$ in an open set $\Omega$, is it true that $u\in C^{k,\alpha}(\Omega)$? As shown by the function
\begin{equation*}
	u(x)= x^2 \sin \frac{1}{x}, \qquad x\in \Real,
\end{equation*}
which is not $C^1$, we can not expect a positive answer without putting more restrictions to the expansion. It turns out that we need to ask the expansion to be uniform in the following sense: for some positive constants $\Lambda$ and $\delta$ independent of $x\in \Omega$,
\begin{itemize}
	\item the coefficients of the polynomial $P_x$  are bounded by $\Lambda$;
	\item the constant in the definition of $O_x(\abs{h}^{k+\alpha})$ is bounded by $\Lambda$;
	\item $\delta_x>\delta.$
\end{itemize}
The function $u$ that satisfies the above assumption is then said to have {\bf uniformly bounded expansion}, or UBE for simplicity.
It will be shown in Section \ref{sec:space} that the set of UBE functions is the same as $C^{k,\alpha}$ functions if one does not mind shrinking the domain a little, which is not a problem since we are only concerned with interior estimate in this paper. This allows us to translate the classical Schauder estimate on $\Real^n$ into a theorem about UBE functions.

A feature of the UBE characterization is that it seems to be a pointwise property. The proof of the Schauder estimate then reduces to showing that if $\triangle u =f$ and $f$ has an expansion of order $k+\alpha$ at $0$ bounded by $\Lambda$ in the above sense, then $u$ has an expansion at $0$ up to order $k+\alpha+2$ bounded by a constant multiple of $\Lambda$ and its own $C^0$ norm. This is exactly what we do in Section \ref{sec:another}.


Similar to what Han did in \cite{han1998,han2000}, an important step (Lemma \ref{lem:key}) is the following: let $f$ be $O(\abs{x}^{k+\alpha})$, then there exists $u$ that is $O(\abs{x}^{k+2+\alpha})$ satisfying
\begin{equation*}
	\triangle u =f \qquad \text{on} \quad B
\end{equation*}
and
\begin{equation*}
	\sup_{B\setminus \set{0}} \frac{\abs{u}}{\abs{x}^{k+2+\alpha}} \leq C \sup_{B\setminus \set{0}} \frac{\abs{f}}{\abs{x}^{k+\alpha}}.
\end{equation*}
This was proved by using the potential in \cite{han1998,han2000} and it is our intention to avoid using the potential, because the analysis of Green's function on $X_\beta$ could be complicated. Hence, we provide a proof of Lemma \ref{lem:key} using only the fact that harmonic functions are polynomials. This argument generalizes well on $X_\beta$.

\vskip 1cm

We then move on to the discussion of the singular space $X_\beta$. If $n$ is even and we identify $\Real^2\times \Real^{n-2}$ with $\mathbb C \times \mathbb C^{\frac{n-2}{2}}$, $X_\beta$ together with $g_\beta$ is also a (noncomplete) K\"ahler manifold. For $\beta\in (0,1)$ and $\alpha\in (0,\min\set{1,\frac{1}{\beta}-1})$, one can define $C^\alpha$ function, using the Riemannian distance as usual. Donaldson \cite{donaldson2012} observed that if one defines $C^{2,\alpha}_\beta$ space by requiring the function $u$, its gradient (in Riemannian geometric sense), its complex Hessian (in the above mentioned K\"ahler structure) to be $C^\alpha$, there is still a Schauder estimate. This estimate plays an important role in the study of conical K\"ahler geometry. Its original proof due to Donaldson is by potential theory and recently, there is another proof (without potential theory) of the same estimate by Guo and Song \cite{guo2016schauder}. Moreover, there is also a parabolic version of Donaldson's estimate due to Chen and Wang \cite{chen2015bessel}.

It is the main purpose of this paper to generalize the above Schauder estimate due to Donaldson in two directions (with a different proof). First, we allow any $\beta>0$ instead of $\beta\in (0,1)$. Second, we study regularity beyond second order derivatives. There are indeed situations in the study of conical K\"ahler geometry in which higher order regularity is necessary. See \cite{li2016uniqueness,zheng2017geodesics} for example. The case $\beta>1$ is useful, because the induced metric of some algebraic variety from its ambient space {\bf can be} conic with integer $\beta>1$. Applications along this line will be pursued in a separate paper.

This generalization is achieved by defining a new function space $\mathcal U^q$ on $X_\beta$(in Section \ref{sec:Holder}). Here $q$ is some positive number, taking the place of $k+\alpha$ for the usual $C^{k,\alpha}$ space. Briefly speaking, the definition is a combination of two ideas: first we require the function to be $C^{k,\alpha}$ for $q=k+\alpha$ away from the singularities; second, we use the idea in the first part of this paper, namely, we use {\bf uniformly bounded expansion} up to order $q$ to describe the regularity of $f$ at the singular points; finally, we need to take care of the transition between the two point of views. See (H1-H3) in Definition \ref{defn:big} for details.

We need to be clear about the type of expansion that is used near a singular point of $X_\beta$, because the Taylor expansion is not available here. This is the topic of Section \ref{subsec:formal}. On one hand, we need the expansion to be general so that it can be used to describe the regularity of the solution that we care; on the other hand, we want the expansion to be very special so that it contains as much information as possible. A choice of the expansion is a balance of the above two considerations. Our previous experience on the regularity issue of PDE's on conical spaces \cite{yin2016analysis,yin2016expansion} suggests that the good choice depends both on the parameter $\beta$ and on the type of equations that we are interested in. In order not to distract the attention of the readers, we give one particular choice in Section \ref{subsec:formal} by defining the $\mathcal T$-polynomial. This choice is sufficient to present the idea of our proof and it is general enough to have Donaldson's Schauder estimate as a special case. 
\begin{rem}
For future applications, we list a family of properties (P1-P4). As long as the definition of $\mathcal T$-polynomial satisfies (P1-P4), the Schauder estimate holds.
\end{rem}

Our choice is justified by the following theorems. They are the main results of this paper. The first is the Schauder estimate. Here $\mathcal D$ is a countable and discrete set of positive numbers (see Section \ref{sec:Holder}), $\hat{B}_r$ is the metric ball in $X_\beta$ with the origin as its center and $\mathcal U^q$ is the new space of functions given by Definition \ref{defn:big}.
\begin{thm}\label{thm:main1}
	Let $q>0$ and $q, q+2\notin \mathcal D$. Suppose $f\in \mathcal U^{q}(\hat{B}_2)$ and $u$ is a bounded weak solution to
	\begin{equation*}
		\triangle_\beta u =f.
	\end{equation*}
	Then $u$ is in ${\mathcal U}^{q+2}(\hat{B}_1)$ and
	\begin{equation*}
		\norm{u}_{{\mathcal U}^{q+2}(\hat{B}_1)}\leq C\left( \norm{u}_{C^0(\hat{B}_2)} + \norm{f}_{{\mathcal U}^{q}(\hat{B}_2)} \right).
	\end{equation*}
\end{thm}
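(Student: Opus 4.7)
The plan is to follow the pointwise Schauder strategy developed in the first part of the paper and adapt it to the conical geometry of $X_\beta$. Verifying $u\in \mathcal{U}^{q+2}(\hat{B}_1)$ reduces, by Definition \ref{defn:big}, to establishing the three conditions (H1)--(H3) separately, and then showing that the three estimates combine with a single constant. Correspondingly, the proof naturally splits into analyzing points on the singular stratum $\Sigma=\set{0}\times \Real^{n-2}$, smooth points far from $\Sigma$, and the transition annulus in between. The hypothesis $q,q+2\notin \mathcal{D}$ is what ensures that no resonance occurs in the inductive construction of expansions on $X_\beta$, analogous to the role played by $k+\alpha\notin \mathbb{N}$ in the Euclidean argument.

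For (H2), the pointwise regularity at each $p\in \Sigma$, I would prove a conical analogue of Lemma \ref{lem:key}: given $f=O(r^{q})$ near $p$ (with $r$ the intrinsic distance in $X_\beta$ from $p$), produce $v$ with $\triangle_\beta v=f$ and $v=O(r^{q+2})$, the sup of $\abs{v}/r^{q+2}$ controlled linearly by the sup of $\abs{f}/r^q$. As stressed in the introduction, the Euclidean proof of Lemma \ref{lem:key} avoids Green's function and uses only the fact that harmonic functions are polynomials; the same idea transplants to $X_\beta$ provided $\triangle_\beta$-harmonic functions split into $\mathcal{T}$-polynomials, which is exactly the content of properties (P1-P4) flagged in the remark after Definition \ref{defn:big}. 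With this conical key lemma, an iteration on the degree peels off the $\mathcal{T}$-polynomial expansion of $f$ at $p$ term by term and constructs the $\mathcal{T}$-polynomial expansion of $u$ at $p$ of total degree $q+2$, with coefficients bounded by a constant multiple of $\norm{f}_{\mathcal{U}^q}+\norm{u}_{C^0}$.

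For (H1), I would use the scaling properties of $g_\beta$ at a smooth point. At any $x$ with $r=\mathrm{dist}(x,\Sigma)>0$, the ball of radius $r/2$ around $x$ in $X_\beta$ is, after the dilation $y\mapsto y/r$, an ordinary Euclidean ball carrying a smooth metric whose $C^{k,\alpha}$ geometry is uniformly bounded independently of $x$. Applying the Euclidean Schauder estimate proved in the first half of the paper (via the UBE characterization) on this rescaled ball and then undoing the dilation yields $C^{k+2,\alpha}$-control of $u$ near $x$ by $\norm{f}_{C^{k,\alpha}}$ and $\norm{u}_{C^0}$, with the precise $r$-dependence demanded by (H1).

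The main obstacle is (H3), which quantitatively couples the regular and singular estimates and matches constants across scales. The idea is to use the expansion at the nearest singular point $p\in\Sigma$ from the (H2) step to write $u=P_p^{(q+2)}+R$ with $R=O(r_p^{q+2})$, and then compare this with the Schauder estimate at the smooth point $x$ from the (H1) step evaluated at scale $r=r_p$. The Campanato-type characterization of H\"older continuity (already used in Section \ref{sec:space}) then translates the bound on $R$ into bounds on the local H\"older seminorms of $u-P_p^{(q+2)}$ on balls centered at $x$, matching what (H3) requires. The delicate issue here is that the $\mathcal{T}$-polynomial $P_p^{(q+2)}$ is not itself a polynomial in the smooth coordinates near $x$, so one has to track how it and its derivatives decompose under the change of viewpoint; this is where the fine structure of the $\mathcal{T}$-polynomials (properties P1--P4) enters most essentially. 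Once (H3) is in place, combining it with (H1) and (H2) yields the stated estimate.
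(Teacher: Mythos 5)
Your overall architecture matches the paper's: a conical analogue of Lemma \ref{lem:key} proved by the annulus decomposition, (H1) handled by the usual interior Schauder estimate away from $\mathcal S$, and (H3) obtained by rescaling to unit size at a point $x$ with $\rho(x)$ small and applying the Euclidean Schauder estimate to $S_x(u-P_{\tilde x}(\cdot-\tilde x))$, whose Laplacian is controlled by (H3) for $f$. That part of the plan is sound and is essentially what Section \ref{sec:schauder} does.

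There is, however, a genuine gap at the heart of the argument. You write that the Euclidean proof of Lemma \ref{lem:key} ``transplants to $X_\beta$ provided $\triangle_\beta$-harmonic functions split into $\mathcal T$-polynomials, which is exactly the content of properties (P1--P4).'' It is not: (P1)--(P4) are elementary statements about $\mathcal T$-monomials themselves (decay under the scaling maps $S_x$, equivalence of norms on the finite-dimensional space of $\mathcal T$-polynomials of bounded degree, and solvability of $\triangle_\beta u=f$ when $f$ is a $\mathcal T$-polynomial). They say nothing about the structure of a general bounded $\triangle_\beta$-harmonic function. The fact you actually need --- that a bounded harmonic function on $\hat B_1\subset X_\beta$ admits an expansion in $X_\beta$-monomials to any order $q$, with all coefficients and the remainder controlled by $\|u\|_{C^0}$, and moreover that the truncated $X_\beta$-polynomial is itself $\triangle_\beta$-harmonic --- is Propositions \ref{prop:stronghf} and \ref{prop:truncation}, and proving them occupies all of Section \ref{sec:harmonic} (it requires combining the two-dimensional cone analysis in the $(\rho,\theta)$ variables with smoothness in $\xi$, and a separate argument that the remainder's Laplacian loses exactly two orders). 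Without these two facts your key lemma collapses: in the annulus decomposition you must subtract from each $w_l$ its ``polynomial part'' $P_l$ on $\hat B_{2^{-l-1}}$ with coefficient bounds of the form $|a^l_{j,k,\sigma}|\le C\Lambda_f 2^{-l(q+2-\deg)}$ to make the series $\sum_l(w_l-P_l)$ converge with the weight $d(x)^{q+2}$, and you must know $P_l$ is globally harmonic so that $\triangle_\beta(w_l-P_l)=f_l$; likewise in the (H2) step the bounded harmonic remainder $v=u-\tilde P_u-e_u$ must be expanded to order $q+2$. Your proposal never establishes, or even identifies, this ingredient, so as written the proof cannot be completed. (A smaller point: your ``iteration on the degree'' for (H2) is unnecessary --- (P4) solves the whole polynomial part of $f$ at once and the key lemma handles the remainder --- and (H1) requires no $r$-dependent rescaling since it only concerns the region $\Omega_\delta$ at fixed distance $\delta$ from $\mathcal S$; the scale-dependent estimate is (H3).)
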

The second is a comparison between the newly defined space $\mathcal U^q$ and the Donaldson's $C^{2,\alpha}_\beta$ space, whose definition we recall in Section \ref{subsec:compare}.
\begin{thm}\label{thm:main2} Suppose $0<\beta<1$ and $0<\alpha<\min \set{1,\frac{1}{\beta}-1}$. If we write $\mathcal X$ for $C^\alpha$ ($\mathcal U^\alpha$, $C^{2,\alpha}_\beta$ and $\mathcal U^{2+\alpha}$ respectively) and $\mathcal Y$ for $\mathcal U^\alpha$ ($C^\alpha$, $\mathcal U^{2+\alpha}$ and $C^{2,\alpha}_\beta$ respectively), then $u\in \mathcal X(\hat{B}_2)$  implies that $u\in \mathcal Y(\hat{B}_1)$ and
	\begin{equation*}
		\norm{u}_{\mathcal Y(\hat{B}_1)} \leq C \norm{u}_{\mathcal X(\hat{B}_2)}.
	\end{equation*}
\end{thm}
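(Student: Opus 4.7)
The plan is to prove each of the four claimed inclusions separately. They split into two cases of quite different difficulty: the pair $(C^\alpha, \mathcal U^\alpha)$ at order $\alpha<1$, which is essentially bookkeeping, and the pair $(C^{2,\alpha}_\beta, \mathcal U^{2+\alpha})$ at order $2+\alpha$, which uses genuine information about the second-order structure.

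For the low-order case, observe that for $q=\alpha<1$ the only admissible $\mathcal T$-polynomial at any point has degree zero, i.e., is the constant $P_p=u(p)$. Given $u\in\mathcal U^\alpha$, property (H1) supplies the $C^\alpha$ bound away from the singular axis $\Sigma=\{z=0\}$, while (H2) at points of $\Sigma$ reads $\abs{u(x)-u(p)}\le \Lambda\, d(p,x)^\alpha$; a standard triangle-inequality argument, connecting two arbitrary points through their nearest points on $\Sigma$ and balancing the two regimes at the scale of the distance to $\Sigma$, yields $u\in C^\alpha(\hat B_1)$ with the quantitative bound. Conversely, given $u\in C^\alpha$ we simply take $P_p\equiv u(p)$ everywhere, and the three properties (H1--H3) follow directly from the single inequality $\abs{u(x)-u(y)}\le [u]_{C^\alpha}\,d(x,y)^\alpha$.

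The second-order case is more substantial. For $C^{2,\alpha}_\beta\subset\mathcal U^{2+\alpha}$, at a point $p$ away from $\Sigma$ we take $P_p$ to be the ordinary Taylor polynomial of degree two; the $C^{2,\alpha}_\beta$ norm, which by Donaldson's definition controls $u$, the Riemannian gradient $\nabla u$, and the complex Hessian $\partial\bar\partial u$ uniformly up to $\Sigma$, produces coefficients that have exactly the singular behavior allowed by $\mathcal T$-polynomials and a $C^\alpha$-controlled remainder. At a point $p\in\Sigma$ we build $P_p$ directly from $u(p)$, $\nabla_\xi u(p)$ and the (distinguished) components of $\partial\bar\partial u(p)$, and recover the expansion $u(p+h)=P_p(h)+O(\abs{h}^{2+\alpha})$ by integrating $\partial\bar\partial u$ along holomorphic radial paths and using Donaldson's $C^\alpha$ control on the Hessian. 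The reverse inclusion $\mathcal U^{2+\alpha}\subset C^{2,\alpha}_\beta$ runs in the opposite direction: from the UBE data one reads off the Riemannian gradient and complex Hessian of $u$ as the appropriate coefficients (or derivatives) of $P_p$, and the uniformity in (H3) combined with the remainder estimate (H2) translates into $C^\alpha$ regularity of these derivatives in Donaldson's sense.

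The main obstacle will be the implication $C^{2,\alpha}_\beta\subset\mathcal U^{2+\alpha}$ at points of $\Sigma$: one must verify that the specific basis of $\mathcal T$-polynomials chosen in Section \ref{subsec:formal} is rich enough to absorb every contribution of $\partial\bar\partial u$ while keeping the remainder of order $\abs{h}^{2+\alpha}$. The delicate terms are those involving fractional powers of $\abs{z}$ produced by the singular factor $\abs{z}^{2\beta-2}$ in the metric; the restriction $\alpha<\tfrac{1}{\beta}-1$ is what guarantees that such terms either fall outside the relevant degree window or are themselves $C^\alpha$-controlled by Donaldson's norm. Matching these terms with the chosen polynomial basis, and doing so with constants uniform in $p$, is where the bulk of the technical work will lie.
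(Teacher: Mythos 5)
Your plan for the pair $(C^\alpha,\mathcal U^\alpha)$ and for the direction $\mathcal U^{2+\alpha}\subset C^{2,\alpha}_\beta$ is essentially the paper's (Lemmas \ref{lem:c1} and \ref{lem:c2}): constant $\mathcal T$-polynomials at order $\alpha$, and at order $2+\alpha$ reading the Donaldson derivatives off $P_{\tilde x}$ plus the (H2)/(H3)-controlled remainder. One caution even here: in the direction $\mathcal U^\alpha\subset C^\alpha$ you attribute the "two points much closer to each other than to $\mathcal S$" regime to (H1), but (H1) only controls the H\"older seminorm on the fixed set $\Omega_\delta$; for points at distance $\rho\ll\delta$ from $\mathcal S$ and mutual distance $\ll\rho$ you genuinely need the scale-invariant bound (H3), $\norm{S_x(u-u(\tilde x))}_{C^{\alpha}(\tilde B_{1/2})}\leq \Lambda\rho(x)^\alpha$, which after undoing the scaling gives exactly the missing H\"older estimate. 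The paper's proof of Lemma \ref{lem:c1}(ii) splits into the cases $d(x_1,x_2)\gtrless c_\beta\rho(x_1)/2$ for precisely this reason.

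The substantive gap is the inclusion $C^{2,\alpha}_\beta\subset\mathcal U^{2+\alpha}$. You propose a direct construction of $P_p$ at $p\in\mathcal S$ from $u(p)$, $\nabla_\xi u(p)$ and the complex Hessian, but this list cannot produce the coefficients of the monomials $\rho^{1/\beta}\cos\theta$ and $\rho^{1/\beta}\sin\theta$ in \eqref{eqn:monic}: these are linear in $z$, are annihilated by $\partial\bar\partial$ and by $\partial_\xi$, and have vanishing $\partial_\rho$ at $\rho=0$ (since $1/\beta>1$), so they are invisible to every quantity you name. They must instead be extracted from the conical gradient components $(\partial_\rho u,\tfrac{1}{\beta\rho}\partial_\theta u)$, and showing that Donaldson's $C^\alpha$ control of these components yields a well-defined limit at $\mathcal S$ and a remainder of order $d^{2+\alpha}$ (together with recovering the $\rho^2$ coefficient from $\tilde\triangle_\beta u$, of which only the trace is controlled, not the individual second derivatives) is exactly the technical work you defer. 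The paper avoids this entirely: it proves only $\mathcal U^{2+\alpha}\subset C^{2,\alpha}_\beta$ directly and obtains the converse as a corollary of the Schauder estimate, via $u\in C^{2,\alpha}_\beta\Rightarrow \triangle_\beta u\in C^\alpha\subset\mathcal U^\alpha\Rightarrow u\in\mathcal U^{2+\alpha}$ by Theorem \ref{thm:schauder}. Your route is feasible in principle (the authors say as much) but as written it is a program, not a proof; either carry out the extraction of the fractional-power coefficients or adopt the Schauder-estimate shortcut.
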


With Theorem \ref{thm:main2}, Theorem \ref{thm:main1} implies the Schauder estimate of Donaldson in \cite{donaldson2012}.

The rest of the paper is organized as follows. In Section \ref{sec:space}, we give a characterization of the $C^{k,\alpha}$ space on $\Real^n$ using uniformly bounded expansion. This was known to Campanato back to the 1960's. We include a proof for completeness, which may be omitted for a first reading. In Section \ref{sec:another}, we prove the Schauder estimate on $\Real^n$. These two sections form the first part of the paper. We then move on to the study on $X_\beta$. We first set up some notations and recall some easy facts about Poisson equations on $X_\beta$ in Section \ref{sec:pre}. In Section \ref{sec:harmonic}, we study bounded harmonic functions on $X_\beta$, which is key to the proof of Theorem \ref{thm:main1}. In Section \ref{sec:Holder}, we define the space $\mathcal U^q$ and prove Theorem \ref{thm:main2}. In the final section, we prove Theorem \ref{thm:main1}.

{\bf Acknowledgement.} The second author would like to thank Professor Xinan Ma for bringing the references \cite{han1998,han2000} to his attention.

\section{H\"older space on $\Real^n$}\label{sec:space}

In this section, we define a new space of functions that satisfy the uniform Taylor expansion condition and prove that it is equivalent to the usual H\"older space $C^{k,\alpha}$. As remarked in the introduction, this result is not new and the proofs are included for completeness.

We write $B_r$ for the ball of radius $r$ centered at the origin in $\Real^n$.
\begin{defn}\label{defn:ube}
Suppose $r$ and $\delta$ are two positive real numbers.	For a function $u$ defined on $B_{r+\delta}$, we say that it has {\bf uniformly bounded expansion} (or UBE for simplicity) up to order $q$ on $B_r$ with scale $\delta$ if there exists some $\Lambda>0$ such that for any $x\in B_r$ and $h\in B_\delta$
\begin{equation*}
	u(x+h)= p_x(h)+ O_x(q),
\end{equation*}
where $p_x(h)$ is a polynomial of $h$ whose coefficients (depending on $x$) are uniformly bounded by $\Lambda$ and $O_x(q)$ is also a function of $h$ satisfying
\begin{equation}\label{eqn:normO}
	\abs{O_x(q)(h)}\leq \Lambda \abs{h}^q,\quad \forall \abs{h}<\delta, \quad \forall x\in B_r.
\end{equation}
\end{defn}

Related to the above definition, we define the following notations:
\begin{enumerate}[(a)]
	\item
	The infimum of all $\Lambda$ satisfying \eqref{eqn:normO} is denoted by $[O_x(q)]_{O_q,B_\delta}$, which is nothing but
\begin{equation*}
	\sup_{h\in B_\delta, h\ne 0} \frac{\abs{O_x(q)}}{\abs{h}^q}.
\end{equation*}
	\item The set of all functions that have UBE up to order $q$ on $B_r$ with scale $\delta$ is denoted by $\mathcal U^{q,\delta}(B_r)$.
	\item For $u\in \mathcal U^{q,\delta}(B_r)$, the infimum of $\Lambda$ in the above definition is defined to be the norm of $u$, denoted by $\norm{u}_{\mathcal U^{q,\delta}(B_r)}$.
\end{enumerate}

It turns out that $\mathcal U^{q,\delta}$ is equivalent to the usual H\"older space $C^{k,\alpha}$ with $q=k+\alpha$ in the following sense.

\begin{prop}
	\label{prop:rn}Suppose $q=k+\alpha$ for some $k\in \mathbb N\cup \set{0}$ and $\alpha\in (0,1)$.

	(i) If $u\in C^{k,\alpha}(B_{r+\delta})$, then $u\in \mathcal U^{q,\delta}(B_r)$ and
	\begin{equation*}
		\norm{u}_{\mathcal U^{q,\delta}(B_r)}\leq C(\delta,q,r,n) \norm{u}_{C^{k,\alpha}(B_{r+\delta})}.
	\end{equation*}

	(ii) If $u\in \mathcal U^{q,\delta}(B_{r+\eta})$ for some $\eta>0$, then $u\in C^{k,\alpha}(B_{r})$ and
	\begin{equation*}
		\norm{u}_{C^{k,\alpha}(B_{r})}\leq C(\eta,q,\delta,r,n) \norm{u}_{\mathcal U^{q,\delta}(B_{r+\eta})}.
	\end{equation*}
\end{prop}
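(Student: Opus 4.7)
The plan is to treat the two directions separately. Direction (i) is immediate from Taylor's theorem with remainder: take $P_x(h)$ to be the $k$-th order Taylor polynomial of $u$ at $x$, whose coefficients $D^\gamma u(x)/\gamma!$ are bounded by $\norm{u}_{C^{k,\alpha}}$, and control the remainder by the integral form, using the $C^\alpha$ estimate on the top-order derivatives.

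For direction (ii), the strategy is to compare the polynomials $P_x$ and $P_y$ associated to two nearby base points. Given $x,y \in B_r$ with $\abs{y-x}$ small enough that both UBE expansions may be invoked at $y+h$ for $\abs{h}\leq \abs{y-x}$ (this is where the buffer $\eta>0$ is spent), set
\begin{equation*}
	Q_{xy}(h) := P_x(y-x+h) - P_y(h),
\end{equation*}
which is a polynomial of degree $\leq k$ in $h$. Subtracting the two UBE expansions of $u(y+h)$ and using the bounds on $O_x(q)$ and $O_y(q)$ gives $\abs{Q_{xy}(h)} \leq C\Lambda \abs{y-x}^q$ on the ball $\abs{h}\leq \abs{y-x}$. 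Since a polynomial of degree $\leq k$ whose sup-norm on a ball of radius $R$ is bounded by $M$ must have each coefficient of $h^\mu$ controlled by $C(k,n)M/R^{\abs{\mu}}$, expanding $Q_{xy}$ via the multinomial theorem and writing $P_x(h)=\sum a_\gamma^x h^\gamma$ yields the family of inequalities
\begin{equation*}
	\Bigl| a_\mu^y - \sum_{\gamma \geq \mu,\, \abs{\gamma}\leq k} \binom{\gamma}{\mu} a_\gamma^x (y-x)^{\gamma-\mu} \Bigr| \leq C\Lambda \abs{y-x}^{q-\abs{\mu}},
\end{equation*}
valid for every multi-index $\mu$ with $\abs{\mu}\leq k$.

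The rest of the proof is a finite downward induction on $\abs{\mu}$. The top case $\abs{\mu}=k$ collapses to $\abs{a_\mu^x - a_\mu^y}\leq C\Lambda \abs{y-x}^\alpha$, showing that each top-order coefficient is a Hölder continuous function of the base point. For $\abs{\mu}<k$ the displayed inequality says $y\mapsto a_\mu^y$ admits an affine-type approximation with coefficients built from the $a_{\mu+e_i}^x$'s and with an error of order $\abs{y-x}^{k-\abs{\mu}+\alpha}$, which by a converse-to-Taylor argument identifies $a_\mu^x$ with $\tfrac{1}{\mu!}D^\mu u(x)$ and promotes $D^\mu u$ to the class $C^{k-\abs{\mu},\alpha}$. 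The Hölder seminorm of the top derivatives comes directly from the base step, and the lower-order norms from the identification.

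The main obstacle I anticipate is the last bookkeeping step: translating the family of polynomial-comparison inequalities into genuine differentiability of $u$ and the identification $a_\mu^x = D^\mu u(x)/\mu!$, iterated cleanly over all $\mu$. One also needs to be careful with the admissible range of $\abs{y-x}$ and with how $\delta$, $\eta$ and $r$ interact so that both expansions reach a common point $y+h$; this is the reason the hypothesis is posed on the slightly enlarged ball $B_{r+\eta}$ rather than $B_r$ itself.
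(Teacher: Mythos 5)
Your part (i) is exactly the paper's argument (Taylor's theorem with integral remainder), and your part (ii) is correct but reaches the key intermediate fact by a genuinely different mechanism. Both proofs ultimately rest on the same skeleton: show that each coefficient function $x\mapsto a_\mu^x$ has a uniformly bounded expansion of order $q-\abs{\mu}$, identify the first-order coefficients with $\partial_i u$, and induct on $k$. The paper gets the lower-order UBE of the coefficients by applying iterated difference quotients $P_{\epsilon,h}$ to the expansions of $u$ centered at $x$ and at $x+h$ and invoking the combinatorial cancellation of Lemma \ref{lem:Pkill}; since this forces a division by $h^\epsilon$, the paper must first restrict to a cone $\Omega$ of directions with $\abs{h_i}\gtrsim \abs{h}$ and then recover all directions by a rotation argument. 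You instead compare $P_x(y-x+\cdot)$ with $P_y(\cdot)$ on the ball $\abs{h}\leq\abs{y-x}$ and use the scaling-equivalence of norms on the finite-dimensional space of polynomials of degree at most $k$, which yields the coefficient inequalities in one stroke, with the approximating polynomial of $a_\mu$ given explicitly by $\sum_{\gamma\geq\mu}\binom{\gamma}{\mu}a_\gamma^x(y-x)^{\gamma-\mu}$, and avoids the difference-quotient machinery, the cone restriction, and the rotation step entirely. The ``bookkeeping'' you flag is not a genuine gap: since $q>1$, the order-$q$ expansion already gives differentiability of $u$ with $a_{e_i}^x=\partial_i u(x)$; your inequality for $\mu=e_i$ then says $\partial_i u\in\mathcal U^{q-1,\delta'}$ on a slightly smaller ball with controlled norm, and a finite induction on $k$ (base case $k=0$ being the definition of $C^\alpha$) closes the argument. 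Just state the induction that way, rather than as a downward induction over $\abs{\mu}$ at fixed $k$, and note that each step spends a little of the buffer $\eta$ and of the scale $\delta$ so that both expansions reach the common point $y+h$.
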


The rest of this section is devoted to the proof of this proposition. The first part follows trivially from the Taylor theorem with integral remainder.

The proof of (ii) is by induction and we assume without loss of generality that $r=1$. The starting point of the induction is the observation that the claim holds trivially true when $k=0$. For $k>0$, the expansion in Definition \ref{defn:ube} implies that $u$ is differentiable for each $x\in B_{1+\eta}$. Therefore, the proof of Proposition \ref{prop:rn} reduces to

{\bf Claim 1:} If $u\in \mathcal U^{q,\delta}(B_{1+\eta})$, then for $i=1,\cdots,n$, some $\eta'\in (0,\eta)$ and some $\delta'>0$,
\begin{equation*}
	\pfrac{u}{x_i} \in \mathcal U^{q-1,\delta'}(B_{1+\eta'}) \quad \text{and} \quad \norm{\pfrac{u}{x_i}}_{\mathcal U^{q-1,\delta'}(B_{1+\eta'})} \leq C(\delta,q,\eta,n) \norm{u}_{\mathcal U^{q,\delta}(B_{1+\eta})}.
\end{equation*}

For the proof of the claim, we recall some notations. Let $\epsilon=(\epsilon_1,\cdots,\epsilon_n)$ be a multi-index. For $h\in \Real^n$, we write
\begin{equation*}
	h^\epsilon= h_1^{\epsilon_1}\cdots h_n^{\epsilon_n}.
\end{equation*}
For some $\delta'>0$ to be determined in a minute, we {\bf fix} $h=(h_1,\cdots,h_n)$ satisfying $\abs{h}<\delta'$ and $h_i\ne 0$ for all $i$. Given this $h$ and a multi-index $\epsilon$ with $\abs{\epsilon}<q+2$, we define the difference quotient operator $P_{\epsilon,h}$ which maps a function defined on $B_{1+\eta}$ to a function defined on $B_{1+\eta'}$ as follows. For $\epsilon=(0,\cdots,1,\cdots,0)$, where the only $1$ is at the $i$-th position,
\begin{equation*}
	P_{\epsilon,h}[f](y):=\frac{f(y+h_ie_i)-f(y)}{h_i}
\end{equation*}
where $e_i$ is the natural basis of $\Real^n$. For $\epsilon=\epsilon'+e_i$, we define
\begin{equation*}
	P_{\epsilon,h}[f](y)=\frac{P_{\epsilon',h}[f](y+h_ie_i)-P_{\epsilon',h}[f](y)}{h_i}.
\end{equation*}
Since $\abs{\epsilon}$ is bounded by $q+2$, by choosing $\delta'$ small (say, $\delta'= \frac{\eta-\eta'}{2(q+2)}$, so that $\abs{h}$ small) depending on $\eta$ and $\eta'$, $P_{\epsilon,h}[f]$ is a function defined on $B_{1+\eta'}$.

\begin{lem}\label{lem:P}
	$P_{\epsilon,h}$ is well defined, i.e., it is independent of the order of induction in its definition. Moreover, we have
	\begin{equation}
		P_{\epsilon,h}[f](y)= \frac{1}{h^\epsilon}\sum_{0\leq \gamma\leq \epsilon} (-1)^{\abs{\gamma}+1} C^\gamma_\epsilon f(y+\gamma h).	
		\label{eqn:P}
	\end{equation}
	Here
	
	(a) $\gamma$ is a multi-index and $0\leq \gamma\leq \epsilon$ means that for each $i=1,\cdots,n$, we have $0\leq \gamma_i\leq \epsilon_i$;
	
	(b) $\gamma h= (\gamma_1h_1,\cdots,\gamma_n h_n)$;
	
	(c) $C^\gamma_\epsilon:= C^{\gamma_1}_{\epsilon_1} \cdots C^{\gamma_n}_{\epsilon_n}$.
\end{lem}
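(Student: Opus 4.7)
The plan is to prove the explicit formula \eqref{eqn:P} by induction on $|\epsilon|$; the well-definedness assertion then follows immediately, since the right-hand side of \eqref{eqn:P} is manifestly independent of any order of iteration, depending only on the multi-indices and on $h$. The base case $|\epsilon|=1$, where $\epsilon = e_i$, reduces verbatim to the definition of $P_{e_i,h}[f]$.

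For the inductive step, I write $\epsilon = \epsilon' + e_i$ for some $i$ with $\epsilon_i \geq 1$ and apply the defining recursion
\[
P_{\epsilon,h}[f](y) = \frac{P_{\epsilon',h}[f](y+h_i e_i) - P_{\epsilon',h}[f](y)}{h_i}.
\]
The inductive hypothesis expands each occurrence of $P_{\epsilon',h}[f]$ as a sum over multi-indices $\gamma'$ with $0 \leq \gamma' \leq \epsilon'$. I re-index the shifted summand via $\gamma = \gamma' + e_i$ and leave the unshifted one as $\gamma = \gamma'$; with the convention $C^k_m = 0$ for $k > m$ or $k < 0$, both sums then range over $0 \leq \gamma \leq \epsilon$, and every term carries $f(y + \gamma h)$. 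Combining, the coefficient of $f(y+\gamma h)$ picks up $C^{\gamma_i}_{\epsilon'_i} + C^{\gamma_i-1}_{\epsilon'_i}$ in the $i$-th slot, which by Pascal's identity equals $C^{\gamma_i}_{\epsilon_i}$, while the other slots contribute $\prod_{j\neq i}C^{\gamma_j}_{\epsilon_j}$ unchanged; together these produce $C^\gamma_\epsilon$, with the sign factor matching $(-1)^{|\gamma|+1}$ after tracking the minus sign from the two-term difference.

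The only real work is the combinatorial bookkeeping: extending the index range from $\gamma'\leq \epsilon'$ to $\gamma\leq \epsilon$ and tracking the sign when passing between the shifted and unshifted sums. A conceptual alternative, which sidesteps the bookkeeping, is to first verify by direct calculation that first-order differences in distinct coordinate directions commute (both compositions $D_{h_i}D_{h_j}f(y)$ and $D_{h_j}D_{h_i}f(y)$ equal the same symmetric expression in $f(y)$, $f(y+h_i e_i)$, $f(y+h_j e_j)$, and $f(y+h_i e_i + h_j e_j)$); this already gives well-definedness, as $P_{\epsilon,h}$ is then unambiguously the composition of $\epsilon_i$ one-dimensional differences in each coordinate, and then \eqref{eqn:P} drops out from an $|\epsilon|$-fold application of the one-dimensional binomial identity.
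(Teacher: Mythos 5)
Your strategy (induction on $\abs{\epsilon}$ closed by Pascal's identity, with the commutativity of one-dimensional difference quotients giving well-definedness) is the natural one and is essentially the ``very elementary'' argument the authors omit. But the sign bookkeeping you dispose of in one clause at the end is precisely where the argument fails: the induction does \emph{not} reproduce the sign $(-1)^{\abs{\gamma}+1}$ of \eqref{eqn:P}. Carry out your own step honestly. With $\epsilon=\epsilon'+e_i$ and the inductive hypothesis that the coefficient of $f(y+\gamma' h)$ at level $\epsilon'$ is $(-1)^{\abs{\gamma'}+1}C^{\gamma'}_{\epsilon'}/h^{\epsilon'}$, the shifted sum contributes $(-1)^{\abs{\gamma-e_i}+1}C^{\gamma-e_i}_{\epsilon'}=(-1)^{\abs{\gamma}}C^{\gamma-e_i}_{\epsilon'}$ to the coefficient of $f(y+\gamma h)$, while the subtracted unshifted sum contributes $-(-1)^{\abs{\gamma}+1}C^{\gamma}_{\epsilon'}=(-1)^{\abs{\gamma}}C^{\gamma}_{\epsilon'}$; Pascal then yields $(-1)^{\abs{\gamma}}C^{\gamma}_{\epsilon}$, not $(-1)^{\abs{\gamma}+1}C^{\gamma}_{\epsilon}$. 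The overall sign flips with each increment of $\abs{\epsilon}$, so what your induction actually proves is \eqref{eqn:P} with $(-1)^{\abs{\gamma}+1}$ replaced by $(-1)^{\abs{\epsilon}+\abs{\gamma}}$. The discrepancy is already visible at $\abs{\epsilon}=2$: for $\epsilon=e_1+e_2$ the definition gives
\begin{equation*}
	P_{\epsilon,h}[f](y)=\frac{1}{h_1h_2}\left[ f(y+h_1e_1+h_2e_2)-f(y+h_1e_1)-f(y+h_2e_2)+f(y) \right],
\end{equation*}
which is the negative of what \eqref{eqn:P} asserts.

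The conclusion is that the identity as printed holds only for odd $\abs{\epsilon}$; the correct exponent is $\abs{\epsilon}+\abs{\gamma}$, i.e.\ the statement is off by the harmless overall factor $(-1)^{\abs{\epsilon}+1}$. (Harmless, because Lemma \ref{lem:Pkill} only uses the vanishing of $Q^\sigma_\epsilon$ and \eqref{eqn:gooda} is insensitive to an overall sign, so nothing downstream is affected.) Your write-up, however, asserts that the signs close up as stated, which is false; you should either prove the corrected formula or record the sign discrepancy explicitly. Everything else --- the base case, the re-indexing with the convention $C^k_m=0$ outside range, and the alternative route through commuting first-order differences --- is sound.
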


The proof is very elementary and omitted. We shall also need the following lemma about combinatorics.

\begin{lem}
	\label{lem:Pkill}
	For any multi-index $\sigma$, set
	\begin{equation*}
		Q^\sigma_\epsilon= \sum_{0\leq \gamma\leq \epsilon} (-1)^{\abs{\gamma}+1} C^\gamma_\epsilon \gamma^\sigma.
	\end{equation*}
	If $\sigma_i< \epsilon_i$ for some $i=1,\cdots,n$, then $Q^\sigma_\epsilon=0$. As a consequence, if we denote the multi-index $(\gamma_1-1,\cdots,\gamma_n-1)$ by $\gamma-1$, then for the same $\sigma$ and $\epsilon$ above
	\begin{equation*}
		Q^\sigma_\epsilon= \sum_{0\leq \gamma\leq \epsilon} (-1)^{\abs{\gamma}+1} C^\gamma_\epsilon (\gamma-1)^\sigma.
	\end{equation*}
\end{lem}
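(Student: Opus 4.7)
The plan is to reduce the multi-index identity to a one-variable fact by exploiting that $C^\gamma_\epsilon$, $\gamma^\sigma$, and $(-1)^{\abs{\gamma}}$ are all products over the coordinates. Since $C^\gamma_\epsilon = \prod_j C^{\gamma_j}_{\epsilon_j}$, $\gamma^\sigma = \prod_j \gamma_j^{\sigma_j}$, and $(-1)^{\abs{\gamma}} = \prod_j (-1)^{\gamma_j}$, the sum defining $Q^\sigma_\epsilon$ factors as a product of one-variable sums,
\[
Q^\sigma_\epsilon \;=\; -\prod_{j=1}^n S(\epsilon_j,\sigma_j), \qquad S(m,k):=\sum_{\gamma=0}^m (-1)^\gamma C^\gamma_m \gamma^k,
\]
so the whole statement rests on properties of the single-variable quantity $S(m,k)$.

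The main (and classical) ingredient is then to show $S(m,k)=0$ whenever $k<m$. I would prove this using the generating function identity
\[
\sum_{\gamma=0}^m (-1)^\gamma C^\gamma_m e^{\gamma t} \;=\; (1-e^t)^m,
\]
obtained by the binomial theorem. The right-hand side equals $((-t)+O(t^2))^m$ and therefore vanishes to order $m$ at $t=0$; differentiating the left-hand side $k$ times in $t$ and evaluating at $t=0$ yields exactly $S(m,k)$, which must then be $0$ for $k<m$. (Equivalently, $S(m,k)$ is, up to sign, the $m$-th forward difference of $\gamma^k$ at $0$, and the $m$-th difference annihilates every polynomial of degree less than $m$.) Combined with the factorization, the hypothesis $\sigma_i<\epsilon_i$ makes the $i$-th factor vanish, proving $Q^\sigma_\epsilon=0$.

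For the ``as a consequence'' assertion, I would expand $(\gamma-1)^\sigma$ coordinatewise by the binomial theorem,
\[
(\gamma-1)^\sigma \;=\; \sum_{0\leq \tau\leq \sigma} (-1)^{\abs{\sigma}-\abs{\tau}} C^\tau_\sigma\, \gamma^\tau,
\]
substitute into the right-hand side of the proposed identity, and interchange the two finite sums to get $\sum_{\tau}(-1)^{\abs{\sigma}-\abs{\tau}} C^\tau_\sigma\, Q^\tau_\epsilon$. Under the hypothesis $\sigma_i<\epsilon_i$, every $\tau\leq \sigma$ satisfies $\tau_i\leq \sigma_i<\epsilon_i$, so the first part of the lemma forces each $Q^\tau_\epsilon$ in the resulting sum (including the term $\tau=\sigma$) to vanish. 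Both sides of the claimed identity therefore equal zero, so strictly speaking the statement is a tautology $0=0$; its actual content, however, is the freedom to replace $\gamma^\sigma$ by $(\gamma-1)^\sigma$ inside the defining sum without changing its value, which will be used later.

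The main (and essentially only) obstacle is the one-variable identity $S(m,k)=0$ for $k<m$, but it is entirely classical and dispatched in a few lines via the generating-function computation above; the multivariate statement then adds only bookkeeping through the product factorization and the binomial expansion.
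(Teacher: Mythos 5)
Your proof is correct and follows essentially the same route as the paper's: both factor $Q^\sigma_\epsilon$ into a product of one-variable sums and kill the $i$-th factor by observing that an appropriate binomial expansion vanishes to order $\epsilon_i$ at the evaluation point, the only cosmetic difference being that your exponential generating function $(1-e^t)^m$ produces the powers $\gamma^k$ directly, whereas the paper differentiates $(1-y)^{\epsilon_i}$ at $y=1$ and must then convert the resulting falling factorials $F(\gamma_i;j)$ back into monomials. You also supply an explicit argument (binomial expansion of $(\gamma-1)^\sigma$ plus the first claim applied to each $\tau\leq\sigma$) for the second assertion, which the paper states but does not prove; your observation that under the stated hypothesis both sides vanish is accurate.
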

\begin{proof}We only prove the first claim of the lemma. An easy observation is that
	\begin{equation*}
		Q^\sigma_\epsilon=- \prod^n_{i=1} \sum_{0\leq \gamma_i\leq \epsilon_i} (-1)^{\gamma_i} C^{\gamma_i}_{\epsilon_i}\gamma_i^{\sigma_i}.
	\end{equation*}
	To show the product is zero, it suffices to show that the $i$-th factor is zero if $\sigma_i<\epsilon_i$.
	 Consider the polynomial
	\begin{equation*}
		f(y_i)=(1-y_i)^{\epsilon_i}=\sum_{0\leq \gamma_i\leq \epsilon_i} C^{\gamma_i}_{\epsilon_i} (-y_i)^{\gamma_i}.
	\end{equation*}
	For each $j=0,\cdots,\sigma_i$, since $j<\epsilon_i$, we have
	\begin{equation*}
		(\partial_{y_i})^{j} f|_{y_i=1}=0,
	\end{equation*}
	which gives
	\begin{equation*}
		\sum_{j\leq \gamma_i\leq \epsilon_i} (-1)^{\gamma_i} C^{\gamma_i}_{\epsilon_i} \frac{\gamma_i !}{(\gamma_i-j)!} =0.
	\end{equation*}
	By setting
	\begin{equation*}
		F(\gamma;j)=\gamma \cdot (\gamma-1) \cdot \cdots \cdot(\gamma-j+1),
	\end{equation*}
	we have
	\begin{equation}\label{eqn:qsb}
		\sum_{0\leq \gamma_i\leq \epsilon_i} (-1)^{\gamma_i} C^{\gamma_i}_{\epsilon_i} F(\gamma_i;j) =0.
	\end{equation}
	$F(\gamma;j)$ is a polynomial of $\gamma$ of degree $j$. Since $\sigma_i<\epsilon_i$,  $\gamma_i^{\sigma_i}$ is then a linear combination of $F(\gamma_i;j)$, $j=0,\cdots,\epsilon_i$. With \eqref{eqn:qsb}, this concludes the proof of Lemma \ref{lem:Pkill}.

\end{proof}

\begin{rem}
	The above lemma is related to the fact that difference quotient kills polynomials.
\end{rem}

Now, we come back to the proof of Claim 1, which consists of two steps. In the first step, we restrict ourselves to a special type of $h$ satisfying
\begin{equation}\label{eqn:betterh}
	\abs{h_i}\geq \frac{1}{2\sqrt{n}} \abs{h},\quad \text{for}\, i=1,\cdots,n.
\end{equation}
 We denote the set of such $h$ by $\Omega$. The reason will be clear in a minute.

\begin{defn}
	\label{defn:pube} Suppose $r$ and $\delta$ are two positive real numbers. A function $u:B_{r+\delta}\to \Real$ is said to have {\bf partially uniformly bounded expansion} with respect to $\Omega$, up to order $q$ and with scale $\delta$, if the assumptions in Definition \ref{defn:ube} hold with $h\in B_\delta$ replaced by $h\in \Omega\cap B_\delta$.

	The space of these functions is denoted by $\mathcal U^{p,\delta}_\Omega(B_{r})$ and its norm by $\norm{\cdot}_{\mathcal U^{p,\delta}_\Omega}$.
\end{defn}

The goal of the first step is the following claim, which is a partial version of Claim 1.

{\bf Claim 2:} If $u\in \mathcal U_\Omega^{q,\delta}(B_{1+\eta})$, then for $i=1,\cdots,n$, some $\eta'\in (0,\eta)$ and some $\delta'>0$,
\begin{equation*}
	\pfrac{u}{x_i} \in \mathcal U_\Omega^{q-1,\delta'}(B_{1+\eta'}) \quad \text{and} \quad \norm{\pfrac{u}{x_i}}_{\mathcal U_\Omega^{q-1,\delta'}(B_{1+\eta'})} \leq C(\delta,q,\eta,n,\Omega) \norm{u}_{\mathcal U_\Omega^{q,\delta}(B_{1+\eta})}.
\end{equation*}

In the second step, we shall derive Claim 1 from Claim 2. For the proof of Claim 2, recall that
\begin{equation}\label{eqn:Pbeta}
	P_{\epsilon,h}[f](x)=\frac{1}{h^\epsilon} \sum_{0\leq \gamma\leq \epsilon} (-1)^{\abs{\gamma}+1} C^\gamma_\epsilon f(x+\gamma h).
\end{equation}
Using the partial UBE assumption, we may expand $f(x+\gamma h)$ into a polynomial centered at $x$,
\begin{equation}
	f(x+\gamma h)=f(x)+\sum_{\abs{\sigma}<q} a_\sigma(x) (\gamma h)^\sigma + O(\abs{h}^q).	
	\label{eqn:fx}
\end{equation}
We may also use the expansion centered at $x+h$ to get
\begin{equation}
	f(x+\gamma h) =f(x+h)+\sum_{\abs{\sigma}<q} a_\sigma(x+h) ( (\gamma-1) h)^\sigma + O(\abs{h}^q).	
	\label{eqn:fxh}
\end{equation}
If we plug both \eqref{eqn:fx} and \eqref{eqn:fxh} into \eqref{eqn:Pbeta} and notice that $(\gamma h)^\sigma= \gamma^\sigma h^\sigma$, Lemma \ref{lem:Pkill} implies that
\begin{equation}\label{eqn:gooda}
	\sum_{\epsilon\leq \sigma, \abs{\sigma}<q} a_\sigma(x) Q^\sigma_\epsilon h^{\sigma-\epsilon} = \sum_{\epsilon\leq \sigma, \abs{\sigma}<q} a_\sigma(x+h) Q^\sigma_\epsilon h^{\sigma-\epsilon} + O(\abs{h}^{q-\abs{\epsilon}}).
\end{equation}
Here we also used the fact that
\begin{equation*}
	\frac{O(\abs{h}^q)}{h^\epsilon}
\end{equation*}
is an $O(\abs{h}^{q-\abs{\epsilon}})$, which is true because $h\in \Omega$. In fact, this is the only place we use the restriction $h\in \Omega$ in the proof of Claim 2.

We learn from \eqref{eqn:gooda} that for any multi-index $\epsilon$ with $1\leq \abs{\epsilon}<q$,
\begin{equation}\label{eqn:bettera}
	a_\epsilon(x+h) = a_\epsilon(x) + \bar{P}_{x,\epsilon}(h) + O(\abs{h}^{q-\abs{\epsilon}}),
\end{equation}
where $\bar{P}_{x,\epsilon}$ is a polynomial of $h$, whose coefficients (depending on $x$ and $\epsilon$) is uniformly bounded.
When $\abs{\epsilon}=[q]$, \eqref{eqn:bettera} is the same as \eqref{eqn:gooda} (with $\bar{P}_{x,\epsilon}=0$). When $\abs{\epsilon}<[q]$, we prove \eqref{eqn:bettera} by induction and assume that \eqref{eqn:bettera} is known for all $a_\sigma$ if $\abs{\sigma}>\abs{\epsilon}$. We rewrite \eqref{eqn:gooda}
\begin{equation*}
	Q^\epsilon_\epsilon(a_\epsilon(x+h)-a_\epsilon(x))= \sum_{\epsilon\lneq \sigma, \abs{\sigma}<q} \left( a_\sigma(x)-a_\sigma(x+h) \right) Q^\sigma_\epsilon h^{\sigma-\epsilon} + O(\abs{h}^{q-\abs{\epsilon}}).
\end{equation*}
By induction hypothesis, we insert \eqref{eqn:bettera} for $\sigma\gneq \epsilon$ into the above equation to get \eqref{eqn:bettera} for $\epsilon$.

If $\abs{\epsilon}=1$, then $a_\epsilon(x)$ is nothing but the partial derivative of $u$ at $x$ and \eqref{eqn:bettera} is the desired estimate in Claim 2.

Next, we show how to obtain Claim 1 from Claim 2, exploiting the rotational symmetry of the statement. For any orthogonal $n\times n$ matrix $A$, set
\begin{equation*}
	\tilde{f}(y)=f(Ay), \quad \text{or equivalently,} \quad f(x)=\tilde{f}(A^{-1} x).
\end{equation*}
Assume $f$ is in $\mathcal U^{q,\delta}(B_{1+\eta})$ (as assumed in Claim 1). Then $\tilde{f}\in \mathcal U^{q,\delta}(B_{1+\eta})\subset \mathcal U^{q,\delta}_\Omega(B_{1+\eta})$. By Claim 2, which has been proved, for any $y\in B_{1+\eta'}$ and $h\in \Omega\cap B_{\delta'}$, we have, uniformly,
\begin{equation}\label{eqn:goodpartial}
	\pfrac{\tilde{f}}{y_i}(y+h) = \pfrac{\tilde{f}}{y_i}(y) + \sum_{0<\abs{\epsilon}<q} \tilde{a}_\epsilon(y) h^\epsilon + O(\abs{h}^{q-1}).
\end{equation}
By Claim 2, the chain rule and \eqref{eqn:goodpartial}, as long as $A^{-1}h\in \Omega$, we have
\begin{eqnarray*}
	\pfrac{f}{x_j}(x+h)&=&  (A^{-1})^i_j \pfrac{\tilde{f}}{y_i} \left( A^{-1} x + A^{-1}h \right) \\
	&=& (A^{-1})^i_j \left[ \pfrac{\tilde{f}}{y_i}(A^{-1} x) + \sum_{0<\abs{\epsilon}<q} \tilde{a}_\epsilon(A^{-1}x) (A^{-1} h)^\epsilon + O(\abs{h}^{q-1}) \right] \\
	&=& \pfrac{f}{x_j}(x) + \sum_{0<\abs{\epsilon}<q} \hat{a}_{\epsilon,A}(x) h^\epsilon + O(\abs{h}^{q-1}).
\end{eqnarray*}

In summary, we have proved that $\pfrac{f}{x_i}$ is partially UBE with respect to $A\Omega$ up to order $q-1$. Now, we take orthogonal matrices $A_1,\cdots,A_l$ such that
\begin{equation*}
	\Real^n = A_1\Omega \bigcup \cdots \bigcup A_l \Omega.
\end{equation*}
Then the partial UBE conditions for each $k$ combine to be UBE if we can justify that
\begin{equation*}
	\tilde{a}_{\epsilon,A_k}(x)
\end{equation*}
is independent of $k=1,\cdots,l$. This is true because we can choose $A_k$ so that $A_{k_1}\Omega \cap A_{k_2}\Omega$ is either empty or has non-empty interior.

\section{A proof of the Schauder estimates on $\Real^n$}\label{sec:another}

We give another proof to the well-known interior Schauder estimate in this section.

Given Proposition \ref{prop:rn}, it suffices to prove
\begin{thm}[Schauder estimate] \label{thm:rn}Suppose that $f\in \mathcal U^{q,\delta}(B_1)$ for some $q=k+\alpha$ with $k\in \mathbb N$ and $\alpha\in (0,1)$. If $u$ is a bounded solution to $\triangle u =f$ on $B_1$, then $u$ lies in $\mathcal U^{q+2,\delta}(B_{1-\delta})$ and
	\begin{equation}\label{eqn:circlestar}
		\norm{u}_{\mathcal U^{q+2,\delta}(B_{1-\delta})} \leq C(n,q,\delta) (\norm{f}_{\mathcal U^{q,\delta}(B_1)} + \norm{u}_{C^0(B_1)}).
	\end{equation}
\end{thm}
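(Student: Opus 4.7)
The plan is to prove the expansion pointwise. Fix an arbitrary $x_0 \in B_{1-\delta}$ and, after translating so that $x_0 = 0$, construct a polynomial $P^u_{x_0}$ of degree at most $k+2$ with uniformly bounded coefficients, together with a remainder $u - P^u_{x_0}$ that is $O(\abs{h}^{q+2})$ on a ball whose radius is a uniform fraction of $\delta$. By the UBE characterization of Definition \ref{defn:ube} (possibly after trading the conclusion's scale $\delta$ for a smaller one and absorbing the large-$\abs{h}$ regime into the $C^0$ norm of $u$), this yields \eqref{eqn:circlestar}.

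The construction has three ingredients. First, since $f \in \mathcal U^{q,\delta}(B_1)$, I extract the polynomial part $p^f$ of degree $\leq k$ at the origin, so that $g := f - p^f$ is a pointwise $O(\abs{h}^q)$ with constant bounded by $\norm{f}_{\mathcal U^{q,\delta}(B_1)}$. Second, because $\triangle$ maps polynomials of degree $d+2$ surjectively onto polynomials of degree $d$ on $\Real^n$, I pick a polynomial $P$ of degree $\leq k+2$ with $\triangle P = p^f$; an induction on the degree gives coefficients of $P$ bounded by a constant multiple of those of $p^f$. Third, I apply Lemma \ref{lem:key} to $g$ on a small ball around the origin to obtain $v$ with $\triangle v = g$ and
\begin{equation*}
\sup_{B \setminus \set{0}} \frac{\abs{v}}{\abs{h}^{q+2}} \leq C \sup_{B \setminus \set{0}} \frac{\abs{g}}{\abs{h}^q} \leq C \norm{f}_{\mathcal U^{q,\delta}(B_1)}.
\end{equation*}

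The residual $w := u - P - v$ is harmonic on that ball, with $C^0$ norm bounded by $\norm{u}_{C^0(B_1)} + C \norm{f}_{\mathcal U^{q,\delta}(B_1)}$, since the $C^0$ sizes of $P$ and $v$ on a ball of radius comparable to $\delta$ are controlled by $\norm{f}_{\mathcal U^{q,\delta}(B_1)}$. Interior derivative estimates for harmonic functions then bound all derivatives of $w$ at $0$ up to order $k+2$ by this $C^0$ norm, and the degree-$(k+2)$ Taylor polynomial $T$ of $w$ at $0$ satisfies $\abs{w(h) - T(h)} \leq C \abs{h}^{k+3}$, which is subdominant compared to $\abs{h}^{q+2}$ since $k+3 > q+2$. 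Setting $P^u_{x_0} := P + T$ produces a polynomial of degree $\leq k+2$ with coefficients bounded by $C(n, q, \delta)(\norm{u}_{C^0(B_1)} + \norm{f}_{\mathcal U^{q,\delta}(B_1)})$, and
\begin{equation*}
u(h) - P^u_{x_0}(h) = v(h) + (w(h) - T(h)) = O(\abs{h}^{q+2})
\end{equation*}
with the same type of bound.

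The main obstacle is Lemma \ref{lem:key} itself: its crucial feature is that it controls $v$ in terms of the pointwise size of $g$ near the origin, not in terms of any H\"older norm. This is precisely what permits us, working at a single point $x_0$, to freely subtract the polynomial part $p^f$ of $f$ and absorb the residual into an error that is $O(\abs{h}^{q+2})$ at $x_0$. Once Lemma \ref{lem:key} is granted, the remaining pieces---polynomial surjectivity of $\triangle$ and interior derivative estimates for harmonic functions---are classical, and uniformity in $x_0 \in B_{1-\delta}$ is automatic because every constant above depends only on $n$, $q$, $\delta$ and the two norms on the right-hand side of \eqref{eqn:circlestar}.
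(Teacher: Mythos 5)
Your proposal is correct and follows essentially the same route as the paper: subtract the polynomial part $p^f$ of $f$ at the chosen point, solve $\triangle P = p^f$ by a polynomial of degree $k+2$ with controlled coefficients, apply Lemma \ref{lem:key} to the $O(\abs{h}^q)$ remainder, and absorb the harmonic residual via its Taylor polynomial of degree $k+2$ (whose error $O(\abs{h}^{k+3})$ is subdominant since $\alpha<1$). The paper's proof is this same three-part decomposition, with uniformity in the base point noted at the end exactly as you do.
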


For the proof, we need the following lemma (see Lemma 2.1 in \cite{han2000}),
\begin{lem}
	\label{lem:key}
	If $f:B_{r}\to \Real$ is $O(q)$ and $q$ is not an integer, then there exists some $u\in O(2+q)$ satisfying
	\begin{equation*}
		\triangle u =f\quad \text{on} \quad B_r.
	\end{equation*}
	Moreover, for some $C>0$ depending on $n,q,r$,
	\begin{equation*}
		[u]_{O_{q+2},B_r}\leq C [f]_{O_q,B_r}.
	\end{equation*}
\end{lem}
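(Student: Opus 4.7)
The approach is a dyadic construction using only the maximum principle and the expansion of harmonic functions on a ball into convergent series of homogeneous harmonic polynomials. Normalize so that $r=1$ and $[f]_{O_q,B_1}=1$, so $|f(x)|\le|x|^q$, and set $r_k:=2^{-k}$. For each $k\ge0$, solve the Dirichlet problem $\Delta V_k=f$ on $B_{r_k}$ with $V_k|_{\partial B_{r_k}}=0$; comparing with the radial barrier $\phi(x)=C|x|^{q+2}$, whose Laplacian matches the bound on $|f|$, the maximum principle yields $\|V_k\|_{L^\infty(B_{r_k})}\le C(n,q)r_k^{q+2}$.

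The difference $W_k:=V_k-V_{k+1}$ is harmonic on $B_{r_{k+1}}$ with the same sup-norm bound, so it admits an expansion $W_k=\sum_{j\ge0}W_k^{(j)}$ into homogeneous harmonic polynomials. Standard derivative/Cauchy estimates for harmonic functions give, for each $k,j$,
\begin{equation*}
|W_k^{(j)}(x)|\le C_j|x|^j r_{k+1}^{q+2-j}\quad\text{for all }x\in\mathbb R^n
\end{equation*}
(extending each polynomial $W_k^{(j)}$ to all of $\mathbb R^n$), together with the tail bound $|W_k(x)-\sum_{j\le N}W_k^{(j)}(x)|\le C_N(|x|/r_{k+1})^{N+1}\|W_k\|_\infty$ for $|x|\le r_{k+1}/2$, where $N:=\lfloor q+2\rfloor$. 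Because $q\notin\mathbb N$, $q+2-j>0$ for every $j\le N$, so the series
\begin{equation*}
H(x):=\sum_{k\ge0}\sum_{j=0}^N W_k^{(j)}(x)
\end{equation*}
converges geometrically on each compact set to a harmonic polynomial of degree $\le N$. Set $u:=V_0-H$, which gives $\Delta u=f$ on $B_1$.

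To verify $u=O(|x|^{q+2})$, fix $x\ne0$, choose $K$ with $r_{K+1}\le|x|<r_K$, and telescope $V_0=V_K+\sum_{k<K}W_k$ to obtain
\begin{equation*}
u(x)=V_K(x)+\sum_{k<K}\Big(W_k(x)-\sum_{j\le N}W_k^{(j)}(x)\Big)-\sum_{k\ge K}\sum_{j\le N}W_k^{(j)}(x).
\end{equation*}
Each of the three summands is bounded by $C|x|^{q+2}$: the first by the max-principle bound together with $r_K\le 2|x|$; the middle by the tail estimate combined with a geometric sum in $r_{k+1}^{q+1-N}$ whose exponent lies in $(-1,0)$ (again using $q\notin\mathbb N$); the last by the polynomial bounds summed over $k\ge K$ with positive exponent $q+2-j$. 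Undoing the normalization gives $[u]_{O_{q+2},B_r}\le C[f]_{O_q,B_r}$.

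\textbf{Main obstacle.} The argument hinges on the strict inequalities $q+2-j>0$ for all $j\le N$ and $N+1>q+2$, both of which fail precisely when $q$ is an integer — matching the lemma's hypothesis. The other subtle point is ensuring that $H$ is a genuine polynomial of finite degree rather than an infinite-degree harmonic function; this follows because, for each fixed $j\le N$, the uniform convergence of $\sum_k W_k^{(j)}$ takes place inside the finite-dimensional space of polynomials of degree $j$.
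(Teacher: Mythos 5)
Your argument is correct and rests on the same engine as the paper's proof: a dyadic decomposition at scales $2^{-k}$, subtraction of the part of the harmonic power-series expansion of degree below $q+2$, and a three-regime summation split at the scale of $\abs{x}$, with $q\notin\mathbb N$ guaranteeing that all the geometric sums have nonzero exponents. The difference is in how the dyadic pieces are manufactured. The paper cuts $f$ into annular pieces $f_l=f\chi_{A_l}$, solves $\triangle w_l=f_l$ globally on $\Real^n$ by the solution vanishing at infinity (on $X_\beta$ this is supplied by Lemma \ref{lem:poisson}), and subtracts from each $w_l$ its own truncated Taylor polynomial $P_l$, which is automatically harmonic. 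You instead solve nested Dirichlet problems $\triangle V_k=f$ on $B_{2^{-k}}$ with a radial barrier $C\abs{x}^{q+2}$ for the sup bound, take telescoping differences $W_k=V_k-V_{k+1}$ (harmonic on the smaller ball), and collect all low-degree homogeneous parts into a single harmonic polynomial $H$. Your version is somewhat more self-contained on $\Real^n$ --- it needs only local Dirichlet solvability and the maximum principle rather than a whole-space existence result --- whereas the paper's version is organized so that it transfers verbatim to $X_\beta$, where Proposition \ref{prop:stronghf} and Proposition \ref{prop:truncation} replace the power-series expansion and the harmonicity of the truncation. One small point to tidy: your tail estimate is stated for $\abs{x}\leq r_{k+1}/2$, but in the middle sum you apply it for all $k<K$, and when $k=K-1$ the point $x$ (with $r_{K+1}\leq\abs{x}<r_K$) can lie outside that range; this lone term should be bounded directly by $\abs{W_{K-1}(x)}+\sum_{j\leq N}\abs{W_{K-1}^{(j)}(x)}\leq C\abs{x}^{q+2}$ using the sup bound and the coefficient estimates, which costs nothing.
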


Before the proof of Lemma \ref{lem:key}, we show how Theorem \ref{thm:rn} follows from it.  For any $x\in B_{1-\delta}$ fixed, there exists a polynomial $p_{f,x}(h)$ (of order $k$) such that
\begin{equation*}
	f(x+h) = p_{f,x}(h) + e_{f,x}(h) \quad \text{on} \quad B_\delta,
\end{equation*}
where $e_{f,x}$ is $O(q)$. By the definition, all the coefficients of $p_{f,x}$ are bounded by $\norm{f}_{\mathcal U^{q,\delta}(B_1)}$. Hence there exists another polynomial $p_{u,x}(h)$ (of order $k+2$, not unique) whose coefficients are bounded by $C(n,q) \norm{f}_{\mathcal U^{q,\delta}(B_1)}$ such that
\begin{equation*}
	\triangle p_{u,x}= p_{f,x} \quad \text{on} \quad B_\delta.
\end{equation*}

By Lemma \ref{lem:key}, there is some $e_{u,x} \in O(q+2)$ such that $\triangle e_{u,x}= e_{f,x}$ on $B_\delta$. Therefore,
\begin{equation}\label{eqn:upe}
	\triangle (u(x+h) - p_{u,x}(h) - e_{u,x}(h)) =0\quad \text{on} \quad B_\delta.
\end{equation}
Moreover, also by Lemma \ref{lem:key}, 
\begin{equation*}
	[e_{u,x}]_{O_{q+2},B_\delta} \leq C [e_{f,x}]_{O_q,B_\delta}.
\end{equation*}
In particular, $\norm{e_{u,x}}_{C^0(B_\delta)}$ is bounded by a multiple of $\norm{f}_{\mathcal U^{q,\delta}(B_1)}$.

By \eqref{eqn:upe}, $u(x+h)-p_{u,x}(h)-e_{u,x}(h)$ is a bounded harmonic function that is bounded on $B_\delta$ by the right hand side of \eqref{eqn:circlestar}. By well-known properties of harmonic functions,
\begin{equation*}
	u(x+h)-p_{u,x}(h)-e_{u,x}(h)=\tilde{p}(h) + \tilde{e}(h),
\end{equation*}
where $\tilde{p}(h)$ is a polynomial of order $k+2$ and $\tilde{e}(h)$ is $O_{q+2,B_\delta}$ and again, the coefficients of $\tilde{p}$ and $[\tilde{e}]_{O_{q+2,B_\delta}}$ is bounded by the right hand side of \eqref{eqn:circlestar}.

By setting $\tilde{p}_{u,x}=p_{u,x}+\tilde{p}$ and $\tilde{e}_{u,x}=e_{u,x}+\tilde{e}$, we have
\begin{equation*}
	u(x+h)=\tilde{p}_{u,x}(h)+ \tilde{e}_{u,x}(h).
\end{equation*}
Notice that the constants in the above argument are independent of $x\in B_{1-\delta}$, hence we have verified that $u\in \mathcal U^{q+2,\delta}(B_{1-\delta})$ with the desired bound.

\vskip 1cm
The rest of this section is devoted to the proof of Lemma \ref{lem:key}. Without loss of generality, we assume $\delta=1$.

We decompose $B_1$ into the union of a sequence of annulus
\begin{equation*}
	A_l:= B_{2^{-l}} \setminus B_{2^{-l-1}} \quad \text{for} \quad l=0,1,2,\cdots.
\end{equation*}
Set
\begin{equation*}
	f_l= f\cdot \chi_{A_l},
\end{equation*}
where $\chi_{A_l}$ is the characteristic function of $A_l$. For simplicity, in the rest of this proof, we write $\Lambda_f$ for $[f]_{O_q,B_\delta}$. By definition,
\begin{equation*}
	\abs{f_l(x)}\leq \Lambda_f \abs{x}^q \chi_{A_l} \leq \Lambda_f 2^{-lq} \quad \text{on} \quad \Real^n.
\end{equation*}
Let $w_l$ be the unique solution (vanishing at the infinity) to the Poisson equation $\triangle w_l =f_l$ on $\Real^n$. Obviously, $w_l$ is harmonic in the complement of $A_l$. Moreover, we have the uniform bound
\begin{equation}\label{eqn:wl}
	\sup_{\Real^n} \abs{w_l} \leq C \Lambda_f 2^{-l (q+2)}.
\end{equation}
Since $w_l$ is harmonic in $B_{2^{-l-1}}$, it is a converging power series there and let $P_l$ be the polynomial that is the part of this series with order strictly smaller than $q+2$, namely, if
\begin{equation*}
	w_l(x)= \sum_\epsilon a_\epsilon x^\epsilon
\end{equation*}
then
\begin{equation}\label{eqn:defnPl}
P_l(x) = \sum_{\abs{\epsilon}<q+2} a_\epsilon x^\epsilon.
\end{equation}
Using the fact that $w_l$ is harmonic on $B_{2^{-l-1}}$ and bounded by $C\Lambda_f 2^{-l(q+2)}$, we estimate
\begin{equation}\label{eqn:abeta}
	\abs{a_\epsilon}\leq C_\epsilon \Lambda_f 2^{-l(q+2-\abs{\epsilon})}.
\end{equation}
Here $C_\epsilon$ is a constant depending on $n$ and $\epsilon$.

Setting
\begin{equation*}
	u_l(x) = w_l(x) - P_l(x),
\end{equation*}
we have the following estimates,
\begin{lem}
	\label{lem:ul} There exists a constant $C_q$ depending on $n$ and $q$ such that

	(i) on $B_{2^{-l-1}}$,
	\begin{equation*}
		\abs{u_l(x)} \leq C_q\Lambda_f 2^{(1+[q]-q)l}\abs{x}^{[q]+3};
	\end{equation*}

	(ii) on $A_l$,
	\begin{equation*}
		\abs{u_l(x)} \leq C_q \Lambda_f 2^{-(q+2)l}
	\end{equation*}
	or equivalently
	\begin{equation*}
		\abs{u_l(x)} \leq C_q \Lambda_f \abs{x}^{q+2};
	\end{equation*}

	(iii) on $B_1\setminus B_{2^{-l}}$,
	\begin{equation*}
		\abs{u_l(x)} \leq C_q \Lambda_f \left[ 2^{-l(q+2)} + \sum_{\abs{\epsilon}<q+2} 2^{-l(q+2-\abs{\epsilon})} \abs{x}^{\abs{\epsilon}} \right].
	\end{equation*}
\end{lem}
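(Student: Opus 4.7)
The key structural observation is that $w_l$ is harmonic on $B_{2^{-l-1}}$, since $\operatorname{supp} f_l \subset A_l$, hence real analytic there. Because $q\notin\mathbb{Z}$, the condition $|\epsilon|<q+2$ is equivalent to $|\epsilon|\le [q]+2$, so $P_l$ is exactly the degree-$([q]+2)$ Taylor polynomial of $w_l$ at the origin. Consequently,
\begin{equation*}
u_l(x)=\sum_{|\epsilon|\ge [q]+3} a_\epsilon x^\epsilon \quad \text{on } B_{2^{-l-1}},
\end{equation*}
and I plan to use only this identity together with the coefficient bound \eqref{eqn:abeta} and the global $L^\infty$ bound \eqref{eqn:wl}.

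Parts (ii) and (iii) will follow from the triangle inequality $|u_l|\le|w_l|+|P_l|$. For $w_l$ apply \eqref{eqn:wl}. For $P_l$ bound term by term using \eqref{eqn:abeta}: on $A_l$ where $|x|\le 2^{-l}$, each term satisfies $|a_\epsilon|\,|x|^{|\epsilon|}\le C_\epsilon\Lambda_f 2^{-l(q+2)}$ and the finite sum yields (ii) (the equivalent form $|u_l|\le C_q\Lambda_f|x|^{q+2}$ then follows from $|x|\ge 2^{-l-1}$ on $A_l$); on $B_1\setminus B_{2^{-l}}$ one keeps the raw estimate $|a_\epsilon|\,|x|^{|\epsilon|}\le C_\epsilon\Lambda_f 2^{-l(q+2-|\epsilon|)}|x|^{|\epsilon|}$, which produces exactly the sum displayed in (iii).

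Part (i) is the main point. Starting from the tail-series identity and \eqref{eqn:abeta}, I will rewrite
\begin{equation*}
|a_\epsilon|\,|x|^{|\epsilon|}\le C_\epsilon\Lambda_f\, 2^{-l(q+2)}(2^l|x|)^{|\epsilon|},
\end{equation*}
then factor out $|x|^{[q]+3}$ together with the scale factor $2^{-l(q+2)+l([q]+3)}=2^{l(1+[q]-q)}$ to obtain
\begin{equation*}
|u_l(x)|\le \Lambda_f\,|x|^{[q]+3}\, 2^{l(1+[q]-q)}\sum_{|\epsilon|\ge [q]+3} C_\epsilon\,(2^l|x|)^{|\epsilon|-[q]-3}.
\end{equation*}
For $|x|\le 2^{-l-1}$ we have $2^l|x|\le 1/2$, so the tail on the right is controlled by a constant depending only on $n$ and $q$, which is exactly the claim.

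The main obstacle is the convergence in part (i): one needs the coefficients $C_\epsilon$ in \eqref{eqn:abeta} not to grow exponentially with a base exceeding $2$, so that $\sum C_\epsilon(1/2)^{|\epsilon|-[q]-3}$ converges. The raw Cauchy-type estimate for harmonic derivatives can produce an unsuitable base; the fix is to derive \eqref{eqn:abeta} instead through the spherical-harmonic decomposition $w_l=\sum_k H_k$, where the $L^2$-to-$L^\infty$ bound on the sphere gives $\sup_{|x|=r}|H_k(x)|\le C(n)k^{(n-2)/2}\|w_l\|_\infty (r\cdot 2^{l+1})^k$, i.e.\ polynomial growth in $k$. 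Absolute convergence of the tail is then automatic on $B_{2^{-l-1}}$, and the three claimed estimates follow cleanly as above.
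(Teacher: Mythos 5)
Your treatment of (ii) and (iii) coincides with the paper's: the paper also bounds $|P_l|\le C_q\Lambda_f 2^{-l(q+2)}$ on $B_{2^{-l}}$ term by term from \eqref{eqn:abeta} and combines with \eqref{eqn:wl}, and (iii) is the same raw combination of \eqref{eqn:wl} and \eqref{eqn:abeta}. For (i) you genuinely diverge: the paper first establishes the $C^0$ bound \eqref{eqn:ul2} on $B_{2^{-l}}$, observes that $u_l$ is harmonic there and vanishes at the origin to order $[q]+2$, and then invokes as a black box the fact that a harmonic function on $B_1$ bounded by $1$ and vanishing to order $k$ at the origin is $O(|x|^{k+1})$; your spherical-harmonic tail estimate is essentially a proof of that black-boxed fact, so your route is more self-contained (and correctly identifies that the naive per-monomial Cauchy estimate is not summable, which the paper silently sidesteps by phrasing the fact qualitatively).

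There is, however, one quantitative slip you should repair. The function $w_l$ is harmonic only on $B_{2^{-l-1}}$ (not $B_{2^{-l}}$), so the sphere on which you can apply the $L^2$-orthogonality of the $H_k$ has radius at most $2^{-l-1}$, and your own stated bound is $\sup_{|x|=r}|H_k|\le C(n)k^{(n-2)/2}\|w_l\|_\infty (r\cdot 2^{l+1})^k$ — base $2^{l+1}$, not $2^l$. Translated back into the form \eqref{eqn:abeta}, this puts an extra factor $2^{|\epsilon|}$ into $C_\epsilon$, which exactly cancels the geometric gain from $2^{l}|x|\le 1/2$ on $B_{2^{-l-1}}$: at $|x|$ near $2^{-l-1}$ the ratio $2^{l+1}|x|$ tends to $1$ and your tail becomes $\sum_k k^{(n-2)/2}$, which diverges. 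So convergence is \emph{not} automatic on all of $B_{2^{-l-1}}$, only on $B_{\theta 2^{-l-1}}$ for fixed $\theta<1$. The fix is cheap and should be stated: on the shell $2^{-l-2}\le |x|<2^{-l-1}$ use the already-proved $C^0$ bound \eqref{eqn:ul2}, $|u_l|\le C_q\Lambda_f 2^{-l(q+2)}=C_q\Lambda_f 2^{(1+[q]-q)l}2^{-l([q]+3)}\le C_q\Lambda_f 2^{(1+[q]-q)l}(4|x|)^{[q]+3}$, and reserve the series argument for $|x|\le 2^{-l-2}$, where $2^{l+1}|x|\le 1/2$ and the tail sums to $C(n,q)$. (The paper's black-boxed fact has the same boundary issue and is likewise handled by the trivial bound near $\partial B_1$, at the cost of a constant depending on $k$, hence on $q$ — harmless here.) With that one patch your argument is complete.
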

Here and in the following proof, $C_q$ may vary from line to line, as long as it depends only on $n$ and $q$.
\begin{proof}
	First, we estimate $P_l(x)$ in $B_{2^{-l}}$ as follows
	\begin{eqnarray*}
		\abs{P_l(x)} &\leq& \sum_{ \abs{\epsilon}<q+2} \abs{a_\epsilon} \abs{x}^{\abs{\epsilon}} \\
		&\leq& C_q \Lambda_f \sum_{\abs{\epsilon}<q+2} 2^{-l(q+2-\abs{\epsilon})} 2^{-l\abs{\epsilon}} \\
		&\leq& C_q \Lambda_f 2^{-l(q+2)}.
	\end{eqnarray*}
	Here we have used \eqref{eqn:abeta} and $\abs{x}\leq 2^{-l}$.
	Together with \eqref{eqn:wl}, this implies that
	\begin{equation}
		\abs{u_l(x)} \leq C_q \Lambda_f 2^{-(q+2)l}\qquad \text{on}\quad B_{2^{-l}},
		\label{eqn:ul2}
	\end{equation}
	which in particular proves (ii) of Lemma \ref{lem:ul}.

	By the definition of $P_l$, $u_l$ is a harmonic function on $B_{2^{-l}}$ which vanishes at the origin up to order $[q]+2$. Hence, (i) of Lemma \ref{lem:ul} follows from \eqref{eqn:ul2} and a scaled version of the following fact:

	If $u$ is a harmonic function on $B_1$ bounded by $1$ and vanishes at the origin up to order $k$, then there is a universal constant depending only on the dimension such that
	\begin{equation*}
		\abs{u(x)}\leq C_n \abs{x}^{k+1}.
	\end{equation*}

	The last part of Lemma \ref{lem:ul} is a trivial combination of \eqref{eqn:wl} and \eqref{eqn:abeta}.
\end{proof}

With these preparations, we claim that
\begin{equation}\label{eqn:defu}
	u(x) =\sum_{l=0}^\infty u_l(x)
\end{equation}
converges on $B_1$ and gives the desired solution $u$ in Lemma \ref{lem:key}.

Since $w_l$ is harmonic in a neighborhood of $0\in \Real^n$, $P_l$ defined in \eqref{eqn:defnPl} is a harmonic polynomial on the entire $\Real^n$. As a consequence,
\begin{equation*}
	\triangle u_l = f_l \quad \text{on} \quad \Real^n.
\end{equation*}
Hence, to show Lemma \ref{lem:key}, it suffices to prove
\begin{equation}\label{eqn:convergeu}
	\sum_{l=0}^\infty \abs{u_l}(x) \leq C_q \Lambda_f \abs{x}^{q+2} \qquad \text{on} \quad B_1,
\end{equation}
which not only implies the convergence of \eqref{eqn:defu}, but also gives the expected bound of $u$ in Lemma \ref{lem:key}. For each $x\in B_1\setminus \set{0}$, all but finitely many $u_l$'s are harmonic in a neighborhood of $x$, hence, the convergence is smooth and $u$ satisfies the Poisson equation $\triangle u=f$.

For each fixed $x\in B_1\setminus \set{0}$, let $l_0$ be given by the condition that
\begin{equation*}
	x\in A_{l_0}.
\end{equation*}
In other words, $\abs{x} < 2^{-l_0} \leq 2\abs{x}$.

To estimate the left hand side of \eqref{eqn:convergeu}, we compute
\begin{eqnarray*}
	\sum_{l=0}^{l_0-1} \abs{u_l}(x) &\leq& C_q \Lambda_f \left( \sum_{l=0}^{l_0-1} 2^{(1+[q]-q)l} \right) \abs{x}^{[q]+3} \\
	&\leq& C_q \Lambda_f 2^{(1+[q]-q)l_0} \abs{x}^{[q]+3} \\
	&\leq& C_q \Lambda_f \abs{x}^{q+2},
\end{eqnarray*}
where we used (i) in Lemma \ref{lem:ul}. (ii) of Lemma \ref{lem:ul} implies
\begin{equation*}
	\abs{u_{l_0}(x)} \leq C_q \Lambda_f \abs{x}^{q+2}.
\end{equation*}
Similarly, using (iii) of Lemma \ref{lem:ul}, we have
\begin{eqnarray*}
	\sum_{l>l_0} \abs{u_l}(x) &\leq& C_q \Lambda_f \sum_{l>l_0} \left[ 2^{-l(q+2)} + \sum_{\abs{\epsilon}<q+2} 2^{-l(q+2-\abs{\epsilon})} \abs{x}^{\abs{\epsilon}} \right]\\
	&\leq& C_q \Lambda_f \abs{x}^{q+2}.
\end{eqnarray*}
This finishes the proof of \eqref{eqn:convergeu} and hence the proof of Lemma \ref{lem:key}.

\section{Preliminaries about $X_\beta$}\label{sec:pre}
In this section, we first define some notations and then recall some basic properties about the Poisson equation on $X_\beta$ whose proofs are omitted.

\subsection{Notations}\label{subsec:notation}
Aside from the natural coordinates $(x,y,\xi)$ of $X_\beta= \Real^2\times \Real^{n-2}$, there is a global coordinate system $(\rho,\theta,\xi)$ on the smooth part of $X_\beta$ given by
\begin{equation*}
	\rho=\frac{1}{\beta} r^{\beta}, \quad x=r\cos\theta,\quad y=r\sin \theta.
\end{equation*}
In terms of $(\rho,\theta,\xi)$, the metric $g_\beta$ in \eqref{eqn:metric1} becomes
\begin{equation*}
	g_\beta= d\rho^2 + \rho^2 \beta^2 d\theta^2 + d\xi^2.
\end{equation*}

Here is a list of notations that are useful.
\begin{enumerate}[(i)]
	\item The singular set, denoted by $\mathcal S$, corresponds to $\set{\rho=0}$ and can be parametrized by $\xi$.
	\item $d(x,y)$ is the Riemannian distance (given by $g_\beta$) between $x$ and $y$ in $X_\beta$.

	\item $\mathcal S_\delta$ is the set of points whose distance to $\mathcal S$ is smaller than $\delta>0$.
	\item $\Omega_\delta= X_\beta \setminus \mathcal S_\delta$.
	\item For a point $x\in X_\beta$, $\hat{B}_r(x)$ is the set of points whose distance to $x$ is smaller than $r$.
	\item
$x_0$ is the origin of $X_\beta$, i.e. the point with $\rho=0$ and $\xi=0$. $\hat{B}_1(x_0)$ is the unit ball, which for simplicity is often denoted by $\hat{B}_1$.
\item $d(x,x_0)$ is usually denoted by $d(x)$.
\item
$\tilde{x}_0$ is the point in $X_\beta$ with $\rho=1$, $\theta=0$ and $\xi=0$. Then there is a constant $c_\beta$ depending only on $\beta$ such that $\hat{B}_{c_\beta}(\tilde{x}_0)$ is topologically a ball and that the restriction of $g_\beta$ to it is comparable with the flat metric on $B_{c_\beta}(0)\subset \Real^n$. Throughout this paper, we fix this $c_\beta$ and denote $\hat{B}_{c_\beta}(\tilde{x}_0)$ by $\tilde{B}$, which also serves as a unit ball. We also write $\tilde{B}_{r}$ for $\hat{B}_{c_\beta r}(\tilde{x}_0)$

\end{enumerate}

For each $x\in X_\beta\setminus \mathcal S$, there is a natural scaling map $\Psi_x$ which maps $\hat{B}_{c_\beta \rho(x)}(x)$ to $\tilde{B}$, where $\rho(x)$ is the $\rho$ coordinate of $x$, i.e. the distance to $\mathcal S$. If $x=(\rho_0,\theta_0,\xi_0)$, then
\begin{equation*}
	\Psi_x(\rho,\theta,\xi)= (\frac{\rho}{\rho_0}, \theta-\theta_0, \frac{\xi-\xi_0}{\rho_0}).
\end{equation*}
Here $\theta-\theta_0$ is understood as the natural minus operation of the group $S^1$.  The scaling $\Psi_x$ induces a pushforward of functions, which we denote by $S_x$. More precisely, if $u$ is a function defined on $\hat{B}_{c_\beta \rho(x)}(x)$, then 
\begin{equation*}
	S_x(u)(y)= u(\Psi_x^{-1}(y))\qquad \forall y\in \tilde{B}.
\end{equation*}

\subsection{Basics on the Poisson equation}\label{subsec:basic}
We collect a few basic facts about PDEs on $X_\beta$.

We start with an observation that is known and utilised by many authors. Consider another copy of $\Real^n$, whose coordinates are given by $(w,v,\xi)$, where $w,v\in \Real$ and $\xi\in \Real^{n-2}$. The Euclidean metric on $\Real^n$ is given by
\begin{equation*}
	dw^2+dv^2+d\xi^2.
\end{equation*}
One can check by direct computation that the mapping
\begin{equation*}
	(\rho,\theta,\xi) \mapsto (\rho\cos\theta, \rho \sin\theta, \xi)\in \Real^{n}
\end{equation*}
is bi-Lipschitzian from $X_\beta$ to $\Real^n$. Hence, the Sobolev space $W^{1,2}(X_\beta)$ ($L^p(X_\beta)$) is the same set of functions as $W^{1,2}(\Real^n)$ ($L^p(\Real^n)$). Moreover, the Sobolev inequality on $X_\beta$ holds with a different constant.

One can prove the following by the usual variation method and Moser iteration. Please note that we state it in a scaling invariant form.
\begin{lem}
	\label{lem:poisson}
	Let $f$ be an $L^\infty$ function supported in $\hat{B}_r$. There exists a solution $u\in W^{1,2}_{loc}(X_\beta)\cap L^\infty(X_\beta)$ to the Poisson equation
	\begin{equation*}
		\triangle_\beta u =f
	\end{equation*}
	with the bound
	\begin{equation*}
		\norm{u}_{L^\infty(X_\beta)}\leq C r^2 \norm{f}_{L^\infty(\hat{B}_r)}.
	\end{equation*}
\end{lem}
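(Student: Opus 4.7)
The plan is to reduce the estimate to the case $r=1$ by the built-in scaling of the cone metric, and then to construct $u$ by direct minimization of the Dirichlet energy while controlling its sup norm by Moser iteration. The rescaling $(\rho,\theta,\xi)\mapsto(\lambda\rho,\theta,\lambda\xi)$ pulls $g_\beta$ back to $\lambda^{2}g_\beta$, so $\triangle_\beta$ scales by $\lambda^{-2}$ and $\hat{B}_r$ becomes $\hat{B}_{r/\lambda}$. Choosing $\lambda=r$ and setting $\tilde{f}(x)=r^{2}f(rx)$, $\tilde{u}(x)=u(rx)$ reduces the claim to $\norm{\tilde{u}}_{L^\infty(X_\beta)}\leq C\norm{\tilde{f}}_{L^\infty(\hat{B}_1)}$, so in what follows I may assume $r=1$ and $f$ supported in $\hat{B}_1$.

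For existence (assuming first $n\geq 3$), I would minimize the functional
\begin{equation*}
J(v)=\frac{1}{2}\int_{X_\beta}\abs{\nabla v}_{g_\beta}^{2}\,dV_\beta - \int_{X_\beta} fv\,dV_\beta
\end{equation*}
over the homogeneous Sobolev space $\dot{W}^{1,2}(X_\beta)$, defined as the completion of $C^\infty_c(X_\beta\setminus\mathcal S)$ in the norm $\norm{\nabla\cdot}_{L^{2}}$. Because $X_\beta$ is bi-Lipschitz to $\Real^{n}$ (as noted in Section \ref{subsec:basic}), the Euclidean Sobolev inequality transfers to $X_\beta$ with only a different constant. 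Together with the compact support of $f$, this gives $\abs{\int fv}\leq C\norm{f}_{L^{2n/(n+2)}}\norm{\nabla v}_{L^{2}}$, so $J$ is coercive and weakly lower semi-continuous; a minimizer $u$ then exists, is a weak solution of $\triangle_\beta u=f$, and satisfies the energy bound $\norm{\nabla u}_{L^{2}(X_\beta)}\leq C\norm{f}_{L^\infty(\hat{B}_1)}$. The case $n=2$ is handled separately by solving the Dirichlet problem on an exhausting sequence of bounded domains with zero boundary data and then passing to the limit using the uniform sup bound established in the next step.

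For the $L^\infty$ bound I would run a standard Moser iteration for $\triangle_\beta u=f$ on $\hat{B}_2$: Caccioppoli inequalities for $\abs{u}^{p}$-powers combined with the $X_\beta$-Sobolev inequality yield
\begin{equation*}
\norm{u}_{L^\infty(\hat{B}_1)}\leq C\bigl(\norm{u}_{L^{2}(\hat{B}_2)}+\norm{f}_{L^\infty(\hat{B}_1)}\bigr),
\end{equation*}
and the $L^{2}$ term on the right is controlled via Sobolev embedding from the energy bound of the previous paragraph. Outside $\hat{B}_1$ the function $u$ is $\triangle_\beta$-harmonic and, belonging to $\dot{W}^{1,2}$, decays at infinity, so the maximum principle gives $\norm{u}_{L^\infty(X_\beta\setminus\hat{B}_1)}\leq\norm{u}_{L^\infty(\partial\hat{B}_1)}$. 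Combining these three estimates and undoing the scaling produces the desired $r^{2}$ factor. The only delicate point is verifying that the Moser iteration and maximum principle remain valid across the singular stratum $\mathcal S$; this is where the codimension-two nature of $\mathcal S$ is essential, since $\mathcal S$ can then be excised by logarithmic cutoffs without affecting the $W^{1,2}$ setup, and the arguments go through verbatim as in the Euclidean case.
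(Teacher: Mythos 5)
Your proposal is correct and follows exactly the route the paper indicates: the paper gives no written proof of Lemma \ref{lem:poisson}, stating only that it follows ``by the usual variation method and Moser iteration'' (using the bi-Lipschitz identification with $\Real^n$ for the Sobolev inequality), which is precisely the variational minimization plus Moser iteration you carry out, with the scaling to $r=1$ made explicit. The remaining details you flag (decay of the $\dot W^{1,2}$-harmonic tail at infinity, excising the codimension-two set $\mathcal S$ by capacity/cutoff arguments) are standard and consistent with the paper's intent.
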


\section{Bounded harmonic functions on $X_\beta$}\label{sec:harmonic}
Suppose that $u$ is a bounded harmonic function on $\hat{B}_2\subset X_\beta$, i.e.,
\begin{equation*}
	\triangle_\beta u =0.
\end{equation*}
We discuss in this section the regularity of $u$ in $\hat{B}_{1}$. Before our discussion on $X_\beta$, we recall that if $u$ is a harmonic function on $B_1\subset \Real^n$, then we can bound any derivatives of $u$ on $B_{1/2}$ by the $C^0$ norm of $u$ on $B_1$. Equivalently, there is the Taylor expansion of $u$ at $0$,
\begin{equation}\label{eqn:expansion1}
	u(x)= \sum_{\abs{\sigma}\leq k} a_\sigma x^\sigma + O(\abs{x}^{k+1}),
\end{equation}
where $\sigma$ is a multi-index and the $a_\sigma$'s and the constant in the definition of $O(\abs{x}^{k+1})$ are bounded by the $C^0$ norm of $u$ on $B_1$.
The first goal of this section is to prove a generalization of this result for harmonic functions on $X_\beta$. More precisely, we prove an analog of \eqref{eqn:expansion1} for harmonic function on $X_\beta$ and provide estimates for the coefficients in the expansion.

Although it is very likely that the expansion we prove below (see Proposition \ref{prop:stronghf}) gives a converging series if we trace the bound for coefficients in the expansion, we do not pursue it here. However, we need the fact that the approximation polynomial of a harmonic function (as given in Proposition \ref{prop:stronghf}) is still harmonic as in the case of $\Real^n$. This is the second goal of this section and is contained in the second subsection.

\subsection{The expansion}\label{sub:expansion}
The first thing we need to do is to generalize the concept of polynomial (or monomial) used in the Taylor expansion. For the time being, we concentrate on the expansion describing the regularity of harmonic functions. In terms of the polar coordinates $(\rho,\theta,\xi)$,  the (monic) {\bf $X_\beta$-monomials} are defined to be
\begin{equation*}
	\rho^{2j+\frac{k}{\beta}} \cos k\theta \xi^\sigma, \rho^{2j+\frac{k}{\beta}}\sin k \theta \xi^\sigma,
\end{equation*}
for each multi-index $\sigma$ of $\Real^{n-2}$ and any $j,k\in \mathbb N\cup \set{0}$. Then sum $2j+\frac{k}{\beta}+\abs{\sigma}$ is called the {\bf degree of the $X_\beta$-monomial} and an {\bf $X_\beta$-polynomial} is a finite linear combination of monomials.

\begin{rem}
	Throughout this paper, unless stated otherwise, the range of $j,k$ and $\sigma$ in a summation is understood as above.
\end{rem}

With these definitions, we can now state the main result of this subsection.

\begin{prop}\label{prop:stronghf}
	If $u$ is a bounded harmonic function in $\hat{B}_1$, then for any $q>0$ and $d(x)<1/2$,
	\begin{equation}\label{eqn:hfexpansion}
		u(x)= \sum_{2j+\frac{k}{\beta}+\abs{\sigma}<q} \rho^{2j+\frac{k}{\beta}} \left( a_{j,k}^\sigma \cos k\theta  +  b_{j,k}^\sigma  \sin k\theta \right) \xi^\sigma + O(d(x)^q),
	\end{equation}
	where by definition $\abs{O(d(x)^q)}\leq \Lambda d(x)^q$.
	Moreover,
	\begin{equation*}
		\abs{a_{j,k}^\sigma}, \abs{b_{j,k}^\sigma}, \Lambda \leq C(q) \norm{u}_{C^0(\hat{B}_1)}.
	\end{equation*}
\end{prop}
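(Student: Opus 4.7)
\textbf{The plan} is to separate variables in the angular direction $\theta$, use the resulting Bessel-type equation in $\rho$ to obtain a formal power-series expansion, and Taylor-expand the coefficients in $\xi$. For a bounded harmonic $u$ on $\hat B_1$, write
$$u(\rho,\theta,\xi)=\sum_{k\ge 0}\bigl[a_k(\rho,\xi)\cos k\theta + b_k(\rho,\xi)\sin k\theta\bigr].$$
From $\triangle_\beta=\partial_\rho^2+\frac{1}{\rho}\partial_\rho+\frac{1}{\beta^2\rho^2}\partial_\theta^2+\triangle_\xi$, each Fourier coefficient $f\in\{a_k,b_k\}$ satisfies the Bessel-type equation
$$\partial_\rho^2 f+\tfrac{1}{\rho}\partial_\rho f-\tfrac{k^2}{\beta^2\rho^2}f+\triangle_\xi f=0,$$
whose two indicial roots at $\rho=0$ are $\pm k/\beta$. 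Boundedness rules out the singular root, so I would substitute the ansatz $f=\rho^{k/\beta}\sum_{j\ge 0}F_{j,k}(\xi)\rho^{2j}$.

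\textbf{Formal expansion and harmonicity.} Matching powers of $\rho$ produces the recursion
$$4(j+1)(j+1+k/\beta)\,F_{j+1,k}=-\triangle_\xi F_{j,k},$$
which determines every $F_{j,k}$ algebraically from $F_{0,k}$. Taylor-expanding each $F_{j,k}(\xi)$ in $\xi$ gives exactly the $X_\beta$-monomials listed in the proposition. A direct calculation using the identity $(2j+k/\beta)^2-k^2/\beta^2=4j(j+k/\beta)$ shows that this recursion is precisely the condition $\triangle_\beta P=0$, so the truncated sum $P_q(x)$ in the statement is in fact an $X_\beta$-harmonic polynomial. Consequently the task reduces to (i) bounding the coefficients by a multiple of $\norm{u}_{C^0(\hat B_1)}$ and (ii) estimating the bounded harmonic remainder $R:=u-P_q$ by $O(d(x)^q)$ on $\hat B_{1/2}$.

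\textbf{Coefficient bounds.} For (i), averaging in $\theta$ gives $\norm{a_k}_{L^\infty},\norm{b_k}_{L^\infty}\le 2\norm{u}_{C^0(\hat B_1)}$. For the $k=0$ mode, the rotational extension $\hat a_0(w,v,\xi):=a_0(\sqrt{w^2+v^2},\xi)$ is Euclidean-harmonic on $B_{1/2}\subset\mathbb R^n$ and bounded by $\norm{u}_{C^0}$, so its Taylor coefficients at the origin are controlled by standard harmonic-function theory on $\mathbb R^n$. For $k\ge 1$, the substitution $A_k=\rho^{-k/\beta}a_k$ converts the Bessel equation into the weighted Laplace equation
$$\partial_\rho^2 A_k+\tfrac{2k/\beta+1}{\rho}\partial_\rho A_k+\triangle_\xi A_k=0,$$
whose smooth bounded solutions on $[0,\delta]\times B^{n-2}$ have their Taylor coefficients at the origin controlled by $\norm{A_k}_{L^\infty}$, and $\norm{A_k}_{L^\infty}$ by $\norm{u}_{C^0}$ via interior elliptic regularity (using the bi-Lipschitz identification with $\mathbb R^n$ from Subsection 4.2). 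Since $2j+k/\beta+\abs{\sigma}<q$ forces $k<q\beta$, only finitely many Fourier modes contribute, and the total number of coefficients is uniformly bounded in terms of $q$.

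\textbf{The hard part} will be (ii), the remainder estimate. To show $\abs{R(x)}\le C(q)\norm{u}_{C^0}d(x)^q$, my preferred approach is a blow-up/compactness argument: if the bound failed, I would rescale $R$ by the dilation $S_\lambda$ from Subsection 4.1 along a suitable sequence $\lambda_n\to 0$, extract a limit in $C^0_{loc}$ away from $\mathcal S$ (justified by standard interior elliptic estimates for bounded harmonic functions), and conclude via a Liouville-type classification: a harmonic function on all of $X_\beta$ with at most $d(x)^q$-growth whose $X_\beta$-polynomial expansion at $x_0$ vanishes through order $q$ must vanish identically. This Liouville statement in turn follows from the same Fourier-mode analysis as above applied on all of $X_\beta$, where the growth condition forces each Fourier mode to terminate as a polynomial of matching degree. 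The most delicate point will be keeping the constants coming from the recursion denominators $4j(j+k/\beta)$ uniformly bounded across all modes $k<q\beta$ that contribute.
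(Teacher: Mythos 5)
Your formal picture is right --- the recursion $4(j+1)(j+1+k/\beta)F_{j+1,k}=-\triangle_\xi F_{j,k}$, the harmonicity of the truncated polynomial, and the finiteness of contributing modes all check out, and the $k=0$ coefficient bound via the rotational extension is fine. But the central step is missing: you substitute the ansatz $f=\rho^{k/\beta}\sum_{j\ge 0}F_{j,k}(\xi)\rho^{2j}$ on the grounds that ``boundedness rules out the singular root.'' That reasoning is valid for the Euler/Bessel \emph{ODE} one gets in the pure 2D cone, but here each Fourier mode satisfies a PDE coupling $\rho$ and $\xi$ (the $\triangle_\xi f$ term), and for a PDE boundedness alone does not produce a convergent --- or even asymptotic --- expansion in powers of $\rho$ with smooth coefficients. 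Establishing that expansion \emph{is} the proposition. The same circularity appears in your coefficient bound for $k\ge 1$: the boundedness of $A_k=\rho^{-k/\beta}a_k$ presupposes the decay $a_k=O(\rho^{k/\beta})$, which is part of what must be proved; interior elliptic regularity on $\Omega_\delta$ cannot give it. The paper's proof spends essentially all of its effort exactly here: Lemma \ref{lem:rhogood} obtains the $\rho$-expansion for each fixed $\xi$ by a bootstrap (treating $-\triangle_\xi\partial_\xi^\sigma u$ as a source for the 2D cone Laplacian and iterating a gain-of-two-orders lemma, ultimately resting on the analogue of Lemma \ref{lem:key}), and then the ``Claim'' in Section \ref{sub:expansion} shows, mode by mode and by induction on the exponent $2j+k/\beta$, that the coefficients $a_{j,k}(\xi)$ are smooth in $\xi$ with $\partial_\xi^\sigma a_{j,k}=a_{j,k,\sigma}$, using uniform $C^l$ convergence of $\rho^{-(2j+k/\beta)}\int_0^{2\pi}u\cos k\theta\,d\theta$ as $\rho\to 0$. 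Nothing in your proposal substitutes for this.

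Your blow-up/Liouville plan for the remainder is a legitimate alternative route (it is close in spirit to the Guo--Song proof cited in the introduction), but as sketched it does not close the gap: to define $P_q$ and assert that its subtraction kills the polynomial part of the blow-up limit, you already need the coefficients to exist, and the Liouville classification itself is justified by ``the same Fourier-mode analysis,'' i.e.\ by the step that is unproven. To make your route rigorous you would either have to prove the mode-by-mode decay $a_k=O(\rho^{k/\beta})$ and the $\xi$-regularity of the limits $F_{0,k}(\xi)=\lim_{\rho\to0}\rho^{-k/\beta}a_k$ directly, or recast the argument in Caffarelli style, constructing approximating $X_\beta$-polynomials scale by scale without presupposing their limit.
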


The rest of this subsection is devoted to the proof of this result. First, we notice that the regularity of $u$ in $\xi$-direction is not a problem. This is summarized in the next lemma, whose proof is omitted.
\begin{lem}\label{lem:xigood}
	Suppose that $u$ is a weak harmonic function on $\hat{B}_2$. Then for any multi-index $\sigma$, $\partial_\xi^\sigma u$ is also a weak harmonic function on $\hat{B}_2$ and
	\begin{equation*}
		\norm{\partial_\xi^\sigma u}_{C^0(\hat{B}_1)}\leq C(\abs{\sigma}) \norm{u}_{C^0(\hat{B}_2)}.
	\end{equation*}
\end{lem}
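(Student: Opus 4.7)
The strategy rests on the $\xi$-translation invariance of $g_\beta$. Since the metric $g_\beta = |z|^{2\beta-2}dz^2 + d\xi^2$ has no $\xi$-dependence, each translation $T_t(\rho,\theta,\xi) := (\rho,\theta,\xi+t)$ with $t\in\Real^{n-2}$ is an isometry of $X_\beta$. Consequently, if $u$ is weak harmonic on $\hat{B}_2$, then so is $u\circ T_t$ on the shifted domain $T_{-t}(\hat{B}_2)$ (this follows by a change of variables in the weak formulation $\int \nabla u\cdot\nabla\phi\, dV_\beta=0$, using $\phi(\cdot - te_j)$ as a test function). The same then holds for the difference quotient
\begin{equation*}
  D_t^j u(x) := \frac{u(x+te_j)-u(x)}{t},
\end{equation*}
where $e_j$ ranges over a basis of $\Real^{n-2}$: it is weak harmonic on $\hat{B}_2 \cap T_{-t}(\hat{B}_2) \supset \hat{B}_{2-|t|}$.

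I would prove the lemma by induction on $|\sigma|$, with the following single-derivative estimate as the engine: \emph{if $v$ is weak harmonic and bounded on some $\hat{B}_r$, then $\partial_{\xi_j}v$ exists as a weak derivative on every $\hat{B}_{r'}$ with $r'<r$, is weak harmonic there, and satisfies}
\begin{equation*}
  \norm{\partial_{\xi_j}v}_{C^0(\hat{B}_{r'})} \leq \frac{C}{r-r'}\,\norm{v}_{C^0(\hat{B}_r)}.
\end{equation*}
Iterating along a finite chain of radii interpolating between $1$ and $2$, losing a factor $C/\epsilon$ at each of the $|\sigma|$ steps, then yields the stated bound on $\hat{B}_1$.

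To establish the single-derivative estimate, I would first derive a uniform Caccioppoli-type $L^2$ bound on $D_t^j v$ on an intermediate ball. Since $D_t^j v$ is weak harmonic, testing its equation against $\eta^2 D_t^j v$ for an appropriate cutoff $\eta$ supported in $\hat{B}_r$ and bounding $\|D_t^j v\|_{L^2}$ in terms of $\|v\|_{L^\infty}$ via the standard trick (integration by parts in $\xi$, which is unaffected by the cone factor) gives
\begin{equation*}
  \norm{D_t^j v}_{L^2(\hat{B}_{(r+r')/2})} \leq \frac{C}{r-r'}\,\norm{v}_{L^\infty(\hat{B}_r)}
\end{equation*}
uniformly in $|t|<(r-r')/2$. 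Feeding this into the local maximum principle (Moser iteration) for the weak harmonic function $D_t^j v$ on balls of size $r-r'$ produces the desired uniform $L^\infty$ bound. Passing to the limit $t\to 0$ along a sequence identifies $\partial_{\xi_j}v$ as the limit, preserves weak harmonicity, and transfers the pointwise bound.

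The main obstacle is ensuring the ingredients transfer cleanly from Euclidean theory, namely that weak harmonic functions on $X_\beta$ obey local boundedness (Moser) and Caccioppoli with constants independent of proximity to $\mathcal{S}$. Both are available because the bi-Lipschitz identification of $X_\beta$ with $\Real^n$ recalled in Section \ref{subsec:basic} converts $\Delta_\beta u=0$ into a uniformly elliptic divergence-form equation on $\Real^n$, to which classical De Giorgi--Nash--Moser theory applies; the singular set $\mathcal{S}$ has measure zero under this identification, so $W^{1,2}$ functions and their weak formulations are insensitive to it. The remainder is a standard difference-quotient-to-derivative argument.
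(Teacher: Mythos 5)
The paper omits the proof of this lemma entirely (``whose proof is omitted''), so there is no argument to compare against; your proof is correct and is surely the intended one. The $\xi$-translation invariance of $g_\beta$, the difference-quotient/Caccioppoli/Moser chain with the induction on $\abs{\sigma}$ through a nested family of radii, and the observation that the bi-Lipschitz identification with $\Real^n$ from Section \ref{subsec:basic} turns $\triangle_\beta$ into a uniformly elliptic divergence-form operator --- so that De Giorgi--Nash--Moser local boundedness and the Caccioppoli inequality hold with constants uniform up to $\mathcal S$ --- together yield exactly the stated bound.
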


When $\xi$ is fixed, we write $u(\xi)$ for $u$ as a function of $\rho,\theta$. The regularity of $u(\xi)$ is essentially a two dimensional problem that has been studied in \cite{yin2016analysis}.
The proof in \cite{yin2016analysis} yields
\begin{lem}
	\label{lem:rhogood}
	For $\abs{\xi}<1/2$,
\begin{equation}\label{eqn:expansionu}
	u(\xi)= \sum_{2j+\frac{k}{\beta}<q} \rho^{2j+\frac{k}{\beta}} \left( a_{j,k}(\xi) \cos k\theta + b_{j,k}(\xi) \sin k\theta \right) + O(\rho^q)\qquad \text{for} \quad  \rho<1/2.
\end{equation}
	
	For any multi-index $\sigma$,
\begin{equation}\label{eqn:expansionusigma}
	\partial^\sigma_\xi u(\xi)= \sum_{2j+\frac{k}{\beta}<q} \rho^{2j+\frac{k}{\beta}} \left( a_{j,k,\sigma}(\xi) \cos k\theta + b_{j,k,\sigma}(\xi) \sin k\theta \right) + O(\rho^q)\qquad \text{for} \quad  \rho<1/2.
\end{equation}
Moreover, $a_{j,k}(\xi)$, $b_{j,k}(\xi)$, $a_{j,k,\sigma}(\xi)$, $b_{j,k,\sigma}(\xi)$ and the constants in the definition of $O(q)$ are uniformly (independent of $\abs{\xi}<\frac{1}{2}$) bounded by a multiple of $\norm{u}_{C^0(\hat{B}_2)}$.
\end{lem}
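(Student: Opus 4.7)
The plan is to reduce the statement to the two-dimensional expansion theorem already available in \cite{yin2016analysis}, using the product structure $X_\beta = \text{(2D cone)} \times \Real^{n-2}$.

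First, I would decompose the Laplacian as $\triangle_\beta = \triangle^{(2)} + \triangle_\xi$, where $\triangle^{(2)}$ is the Laplacian of the 2D cone in the variables $(\rho,\theta)$ and $\triangle_\xi = \sum_i \partial_{\xi_i}^2$. Freezing $\xi$ with $\abs{\xi}<1/2$ and writing $u(\xi)$ for the function $(\rho,\theta)\mapsto u(\rho,\theta,\xi)$, the equation $\triangle_\beta u =0$ rewrites as the 2D Poisson equation
\begin{equation*}
	\triangle^{(2)} u(\xi) = -\triangle_\xi u(\xi) \quad \text{on the 2D slice.}
\end{equation*}
By Lemma \ref{lem:xigood}, for every multi-index $\tau$, $\partial^\tau_\xi u$ is again a bounded weak harmonic function on $\hat{B}_{3/2}$ with $\norm{\partial^\tau_\xi u}_{C^0(\hat{B}_{3/2})}\leq C(\abs{\tau})\norm{u}_{C^0(\hat{B}_2)}$. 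In particular, the right hand side $-\triangle_\xi u(\xi)$, together with all its $\xi$-derivatives, is uniformly bounded in $\xi$ on a 2D slice.

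Second, I would invoke the two-dimensional expansion theorem from \cite{yin2016analysis}: a bounded solution $v$ of $\triangle^{(2)} v = g$ on a ball in the 2D cone, with $g$ sufficiently regular, admits an expansion
\begin{equation*}
	v(\rho,\theta) = \sum_{2j+k/\beta<q} \rho^{2j+k/\beta}(a_{j,k}\cos k\theta + b_{j,k}\sin k\theta) + O(\rho^q),
\end{equation*}
where the coefficients and the remainder are controlled by $\norm{v}_{C^0}$ and a finite amount of regularity of $g$. Applying this with $v=u(\xi)$ and $g=-\triangle_\xi u(\xi)$ and noting that the regularity of $g$ in $(\rho,\theta)$ on the cone comes for free from the harmonicity of $\triangle_\xi u$ on the entire $X_\beta$ (via Lemma \ref{lem:xigood}, combined with a bootstrapping/induction argument on $q$), produces the expansion \eqref{eqn:expansionu} with the claimed uniform bounds on $a_{j,k}(\xi),b_{j,k}(\xi)$ and the $O(\rho^q)$ remainder.

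Third, for the estimate \eqref{eqn:expansionusigma} of $\partial^\sigma_\xi u$, I would simply observe that by Lemma \ref{lem:xigood}, $\partial^\sigma_\xi u$ is itself a bounded weak harmonic function on $\hat{B}_{3/2}$ with a bound depending only on $\abs{\sigma}$ and $\norm{u}_{C^0(\hat{B}_2)}$. The argument above applied to $\partial^\sigma_\xi u$ in place of $u$ then yields \eqref{eqn:expansionusigma}.

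\textbf{Main obstacle.} The subtle point is the interplay between the 2D cone theory and the transverse $\xi$-regularity. The 2D theorem in \cite{yin2016analysis} is stated for the Poisson equation on the 2D cone, so one needs to verify that the right hand side $-\triangle_\xi u(\xi)$ is regular enough, uniformly in $\xi$, to obtain the expansion up to the desired order $q$. This is handled by the harmonicity of every $\xi$-derivative on all of $X_\beta$, which via Lemma \ref{lem:xigood} feeds back as many bounded $\xi$-derivatives as we need. Tracking that all constants in this bootstrap remain uniform in $\xi$ for $\abs{\xi}<1/2$ is the main bookkeeping.
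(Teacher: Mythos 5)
Your proposal is correct and follows essentially the same route as the paper's appendix: freeze $\xi$, rewrite harmonicity as the two-dimensional Poisson equation $\tilde{\triangle}_\beta(\partial_\xi^\sigma u)=-\triangle_\xi(\partial_\xi^\sigma u)$, use Lemma \ref{lem:xigood} to bound the right-hand side and all its $\xi$-derivatives uniformly, start the induction from H\"older continuity (De Giorgi), and bootstrap the order of the expansion by two at each step via the 2D result (Lemma 6.9) of \cite{yin2016analysis}, proving \eqref{eqn:expansionu} and \eqref{eqn:expansionusigma} simultaneously so that the right-hand side always has the expansion needed at the next stage.
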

For completeness, we include a proof of Lemma \ref{lem:rhogood} in the appendix.

\begin{rem}
	Please note the difference between $a_{j,k,\sigma}$ in this lemma and $a_{j,k}^\sigma$ in Proposition \ref{prop:stronghf}. Lemma \ref{lem:rhogood} does not claim any relation between $a_{j,k}(\xi)$ and $a_{j,k,\sigma}(\xi)$. It will be clear in a minute that
	\begin{equation*}
		a_{j,k,\sigma}(\xi)= \partial_\xi^\sigma a_{j,k}.
	\end{equation*}
	The same applies to $b_{j,k}$ and $b_{j,k,\sigma}$.
\end{rem}

Given Lemma \ref{lem:rhogood} and Lemma \ref{lem:xigood}, we claim that:

{\bf Claim:} the $a_{j,k}(\xi)$ and $b_{j,k}(\xi)$ in \eqref{eqn:expansionu} are smooth functions of $\xi$ on $\set{\abs{\xi}<1/2}$. Moreover, for any multi-index $\sigma$,
\begin{equation}
	\sup_{\abs{\xi}<1/2} \abs{\partial_\xi^\sigma a_{j,k}} + \abs{\partial_\xi^\sigma b_{j,k}} \leq C(j,k,\sigma) \norm{u}_{C^0(\hat{B}_2)}.
	\label{eqn:claim}
\end{equation}

Proposition \ref{prop:stronghf} follows from the claim, because we can expand $a_{j,k}(\xi)$ and $b_{j,k}(\xi)$ (in \eqref{eqn:expansionu}) into the sum of a Taylor polynomial of $\xi$ and a remainder $O(\abs{\xi}^q)$.  Then the $O(d^q)$ in Proposition \ref{prop:stronghf} is the sum of $O(\rho^q)$ in \eqref{eqn:expansionu}, a sum of $X_\beta$-monomials with degree no less than $q$ and
\begin{equation*}
	\rho^{2j+\frac{k}{\beta}} O(\abs{\xi}^q), \qquad 2j+\frac{k}{\beta}<q
\end{equation*}
in the expansion of $a_{j,k}(\xi)$ and $b_{j,k}(\xi)$.

\vskip 0.5cm

For the proof of the claim, we start with $a_{0,0}(\xi)$. By \eqref{eqn:expansionu},
\begin{equation*}
	a_{0,0}(\xi)=\lim_{\rho\to 0} u.
\end{equation*}
By Lemma \ref{lem:xigood} $\abs{\partial_\xi^\sigma u}$ is uniformly bounded for any $\abs{\xi}<\frac{1}{2}$, hence the convergence is in fact in $C^l$ for any $l$, then our claim for $a_{0,0}$ follows.

Now for a fixed $\sigma$, \eqref{eqn:expansionusigma} gives
\begin{equation*}
	a_{0,0,\sigma}(\xi)=\lim_{\rho\to 0} \partial_\xi^\sigma u.
\end{equation*}
As before, since $\lim_{\rho\to 0} u$ is in $C^l$ topology for any $l>0$, we have
\begin{equation*}
	\lim_{\rho\to 0} \partial_\xi^\sigma u = \partial_\xi^\sigma (\lim_{\rho\to 0} u),
\end{equation*}
which implies that
\begin{equation}\label{eqn:tricky}
	a_{0,0,\sigma}(\xi)=\partial_\xi^\sigma a_{0,0}(\xi).
\end{equation}

Since our claim for $a_{0,0}$ is proved, we may assume that it is zero at the very beginning of the proof by replacing $u$ with $u-a_{0,0}(\xi)$. Thanks to \eqref{eqn:tricky}, a consequence of this assumption is that
\begin{equation}\label{eqn:a00}
	a_{0,0,\sigma}\equiv 0,\qquad  \forall \sigma.
\end{equation}

Next, suppose that $2j_1+\frac{k_1}{\beta}$ is the next (smallest) nonzero power in the expansion, namely, $j_1=1$ and $k_1=0$ when $\beta<1/2$ or $j_1=0$ and $k_1=1$ when $\beta\geq 1/2$. \footnote{Think about $\beta=1/2$. Annoying discussion.}\eqref{eqn:a00} and \eqref{eqn:expansionusigma} imply
\begin{equation*}
	\frac{\partial_\xi^\sigma u}{\rho^{2j_1+\frac{k_1}{\beta}}}
\end{equation*}
is uniformly bounded (w.r.t. $\xi$) by a constant depending on $\sigma$. Hence, the convergence
\begin{equation*}
	a_{j_1,k_1}(\xi) = \lim_{\rho\to 0} \frac{1}{\pi}\int_0^{2\pi} \frac{u}{\rho^{2j_1+\frac{k_1}{\beta}}} \cos k_1\theta d\theta
\end{equation*}
is uniform in any $C^l$ topology. This implies that our claim for $a_{j_1,k_1}$ holds, which enables us to assume $a_{j_1,k_1}(\xi)=0$ at the beginning. Notice that we also have (by the same reason)
\begin{equation*}
	\partial_\xi^\sigma a_{j_1,k_1}(\xi) = a_{j_1,k_1,\sigma}(\xi),
\end{equation*}
so that we can repeat the argument to prove the claim for any $a_{j,k}$ and $b_{j,k}$.

\subsection{The approximating $X_\beta$-polynomial is still harmonic}\label{sub:approx}

We prove in this section
\begin{prop}
	\label{prop:truncation} Suppose that $u$ satisfies the assumptions of Proposition \ref{prop:stronghf} and therefore has an expansion given by \eqref{eqn:hfexpansion}. Then
	\begin{equation}\label{eqn:truncation}
		\triangle_\beta \left[ \sum_{2j+\frac{k}{\beta}+\abs{\sigma}<q} \rho^{2j+\frac{k}{\beta}} \left( a_{j,k}^\sigma \cos k\theta  +  b_{j,k}^\sigma  \sin k\theta \right) \xi^\sigma \right] =0.
	\end{equation}
\end{prop}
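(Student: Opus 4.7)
My plan is to exploit the homogeneity of $X_\beta$-monomials under the isotropic dilation $\Psi_r(\rho,\theta,\xi):=(r\rho,\theta,r\xi)$ and prove \eqref{eqn:truncation} by reducing it, via induction on the total degree, to the harmonicity of each ``homogeneous piece'' of the approximating polynomial.

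First I would group the terms of the bracketed polynomial $P$ in \eqref{eqn:truncation} by their total degree $d=2j+k/\beta+|\sigma|$. Only finitely many triples $(j,k,\sigma)$ satisfy $2j+k/\beta+|\sigma|<q$, so there is a finite increasing sequence $d_0<d_1<\cdots<d_N<q$ of distinct degrees that occur, and I write $P=\sum_{i=0}^N M_{d_i}$, where $M_{d_i}$ collects all monomials of degree exactly $d_i$. A direct computation gives $m\circ\Psi_r=r^{d}m$ for any $X_\beta$-monomial of degree $d$, hence $M_{d_i}\circ\Psi_r=r^{d_i}M_{d_i}$. Moreover $\Psi_r^*g_\beta=r^2g_\beta$, so for any function $v$ one has $\Delta_\beta(v\circ\Psi_r)=r^2(\Delta_\beta v)\circ\Psi_r$. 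In particular, if $u$ is harmonic on $\hat{B}_2$ then $u_r:=u\circ\Psi_r$ is harmonic on $\hat{B}_{2/r}$. Substituting $\Psi_r(x)$ into \eqref{eqn:hfexpansion} yields, for $r\in(0,\tfrac12)$ and $x\in\hat{B}_1$,
\begin{equation*}
u_r(x)=\sum_{i=0}^N r^{d_i}M_{d_i}(x)+E_r(x),\qquad |E_r(x)|\le\Lambda r^q d(x)^q.
\end{equation*}

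Next I would show each $M_{d_i}$ is $\Delta_\beta$-harmonic by induction on $i$ (the case $i=0$ is subsumed in the step, since the sum over $j<0$ is empty). Assuming $M_{d_j}$ is harmonic for every $j<i$, define
\begin{equation*}
w_r(x):=r^{-d_i}\Bigl[u_r(x)-\sum_{j<i}r^{d_j}M_{d_j}(x)\Bigr].
\end{equation*}
By the inductive hypothesis and the harmonicity of $u_r$, $w_r$ is harmonic on $\hat{B}_2$. The displayed expansion rearranges to
\begin{equation*}
w_r(x)=M_{d_i}(x)+\sum_{j>i}r^{d_j-d_i}M_{d_j}(x)+r^{-d_i}E_r(x),
\end{equation*}
and on $\hat{B}_1$ each of the last two terms tends to $0$ uniformly as $r\to 0$: $r^{d_j-d_i}\to 0$ since $d_j>d_i$, while $|r^{-d_i}E_r(x)|\le\Lambda r^{q-d_i}$ and $d_i<q$. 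Therefore $w_r\to M_{d_i}$ uniformly on $\hat{B}_1$.

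Finally, on the smooth open set $\hat{B}_1\setminus\mathcal S$, standard interior elliptic regularity upgrades this uniform convergence to $C^\infty_{\mathrm{loc}}$ convergence, so $\Delta_\beta M_{d_i}=0$ pointwise on $\hat{B}_1\setminus\mathcal S$. Since $\Delta_\beta M_{d_i}$ is itself a finite combination of $X_\beta$-monomials (each of degree $d_i-2$), its vanishing on the nonempty open set $\hat{B}_1\setminus\mathcal S$ forces $\Delta_\beta M_{d_i}\equiv 0$ as a polynomial identity. Summing over $i=0,\dots,N$ gives $\Delta_\beta P=\sum_i \Delta_\beta M_{d_i}=0$, which is \eqref{eqn:truncation}. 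The main technical point is the passage from uniform convergence of the rescaled harmonic functions $w_r$ to pointwise vanishing of $\Delta_\beta$ on the limit; this is handled cleanly by working on the smooth part $\hat{B}_1\setminus\mathcal S$, where ordinary elliptic theory applies and the singular set $\mathcal S$ plays no role.
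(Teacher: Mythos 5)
Your proof is correct, but it takes a genuinely different route from the paper's. The paper proves \eqref{eqn:truncation} by showing that the remainder $O(d^q)$ in \eqref{eqn:hfexpansion} satisfies $\triangle_\beta O(d^q) = O(d^{q-2})$; this requires splitting the remainder into the three pieces inherited from the proof of Proposition \ref{prop:stronghf} (monomials of degree at least $q$, terms $\rho^{2j+k/\beta}\cos k\theta\, O(\abs{\xi}^q)$ coming from Taylor-expanding the smooth coefficients $a_{j,k}(\xi)$, and the two-dimensional remainder $O(\rho^q)$) and controlling two derivatives of each, in particular via the identity $\partial_\xi^\sigma a_{j,k}=a_{j,k,\sigma}$ and the weighted estimates \eqref{eqn:weighted}. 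Once that is known, $\triangle_\beta T = -O(d^{q-2})$ is an $X_\beta$-polynomial of degree less than $q-2$ and must vanish. You never differentiate the remainder: you split $T$ into homogeneous pieces, use the dilation $\Psi_r$ (under which degree-$d$ monomials are eigenfunctions with eigenvalue $r^{d}$ and $\triangle_\beta$ scales by $r^{-2}$) to exhibit each piece, inductively from the lowest degree up, as a uniform limit of harmonic functions, invoke interior elliptic regularity on the smooth part $\hat{B}_1\setminus \mathcal S$ to get harmonicity of the limit, and finish with linear independence of the $X_\beta$-monomials. Your route is more elementary in that it uses only the $C^0$ bound on the error term and the scaling structure, and it would survive in settings where derivative bounds on the remainder are unavailable; the paper's route yields the extra information that $\triangle_\beta$ maps the remainder class $O(d^q)$ into $O(d^{q-2})$, a fact about the error term itself and not only about the polynomial part. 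Both arguments share the closing step that an $X_\beta$-polynomial forced to be too small near the origin (or to vanish on a nonempty open set) must vanish identically.
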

\begin{proof}
	To prove this proposition, it suffices to justify that the $\triangle_\beta$ of the remainder $O(d^q)$ (in \eqref{eqn:hfexpansion}) is an $O(d^{q-2})$. In fact, letting $T$ be the $X_\beta$-polynomial in \eqref{eqn:truncation} (or equivalently, in \eqref{eqn:hfexpansion}), we have
	\begin{equation}\label{eqn:dueto}
		\triangle_\beta u = \triangle_\beta T + O(d^{q-2})=0,
	\end{equation}
	{\bf if} our claim for $O(d^q)$ holds. One can check that $\triangle_\beta T$ is an $X_\beta$-polynomial of degree smaller than $q-2$, then it vanishes due to \eqref{eqn:dueto}.

	For the claimed property of $O(d^q)$, recall that, by the proof of Proposition \ref{prop:stronghf}, it is the sum of
	\begin{enumerate}[(1)]
		\item $\rho^{2j+\frac{k}{\beta}}\cos k\theta \xi^\sigma$ for $2j+\frac{k}{\beta}+\abs{\sigma}>q$; (there is a similar term with $\sin$ replacing $\cos$)

		\item $\rho^{2j+\frac{k}{\beta}}\cos k\theta O(\abs{\xi}^q)$;

		\item the $O(\rho^q)$ term in \eqref{eqn:expansionu}.
	\end{enumerate}
	It is trivial that the desired property is true for functions in (1). For (2), notice that $O(\abs{\xi}^q)$ comes from the Taylor expansion of $a_{j,k}(\xi)$ and $b_{j,k}(\xi)$ that are smooth in $\xi$ and hence $\partial_\xi^2 O(\abs{\xi}^q)$ is $O(\abs{\xi}^{q-2})$. Given this, it is straightforward to check that
	\begin{equation*}
		\triangle_\beta \left( \rho^{2j+\frac{k}{\beta}}\cos k\theta O(\abs{\xi}^q) \right) = O(d^{q-2}), \qquad \text{where} \quad d^2 =\rho^2 + \abs{\xi}^2.
	\end{equation*}

	It remains to study the remainder $O(\rho^q)$ in \eqref{eqn:expansionu}. In the proof of Proposition \ref{prop:stronghf}, we have shown that $a_{j,k}(\xi)$ and $b_{j,k}(\xi)$ in \eqref{eqn:expansionu} are smooth functions and for any multi-index $\sigma$,
$$\partial_\xi^\sigma a_{j,k}(\xi)=a_{j,k,\sigma}(\xi)\quad \text{and} \quad
\partial_\xi^\sigma b_{j,k}(\xi)=b_{j,k,\sigma}(\xi).
$$
By comparing \eqref{eqn:expansionu} and \eqref{eqn:expansionusigma}, we notice that any $\xi$-derivative of the $O(\rho^q)$ in \eqref{eqn:expansionu} is still a $O(\rho^q)$.

Since $u$ is a harmonic function, it is not difficult to prove by the interior Schauder estimate and Lemma \ref{lem:xigood} that
\begin{equation}\label{eqn:weighted}
	\sup_{\hat{B}_1\setminus \mathcal{S}} \abs{(\rho \partial_\rho)^{k_1} (\partial_\theta)^{k_2} u} \leq C(k_1,k_2).
\end{equation}
Using equation \eqref{eqn:expansionu}, the estimate \eqref{eqn:weighted} holds for $O(\rho^q)$, which implies that
\begin{equation*}
	\left( \partial_\rho^2 + \frac{1}{\rho}\partial_\rho + \frac{1}{\beta^2 \rho^2}\partial_\theta^2 \right) O(\rho^q) = O(\rho^{q-2}).
\end{equation*}
This concludes the proof of Proposition \ref{prop:truncation} by noticing that $\rho\leq d$ and $O(\rho^{q-2})$ is $O(d^{q-2})$.
\end{proof}
\section{Generalized H\"older space on $X_\beta$}\label{sec:Holder}
In this section, we define the $X_\beta$ counterpart of $C^{k,\alpha}$ function on $\Real^n$ and compare it with Donaldson's $C^{2,\alpha}_\beta$ space when $0<\beta<1$ and $0<\alpha<\min \set{\frac{1}{\beta}-1,1}$.

\subsection{Formal discussion}\label{subsec:formal}
The basic idea of our definition as illustrated by Section \ref{sec:space} is to use generalized `polynomial' to describe the regularity near a singular point. In this subsection, we are concerned with the question of what is the correct `polynomial' for $X_\beta$.

Recall that in Proposition \ref{prop:stronghf}, for the expansion of harmonic functions, we defined $X_\beta$-polynomials, which are finite linear combinations of
\begin{equation*}
	\rho^{2j+\frac{k}{\beta}}\cos k\theta \xi^\sigma,  \rho^{2j+\frac{k}{\beta}}\cos k\theta \xi^\sigma
\end{equation*}
where $j,k\in \mathbb N \cup \set{0}$ and $\sigma$ is a multi-index of $\Real^{n-2}$.

$X_\beta$-polynomials are not enough for the study of more complicated PDE solutions, because the product of two $X_\beta$-polynomials is {\bf not} $X_\beta$-polynomial. This motivates the following definition.

\begin{defn}
	Suppose that $j,k,m\in \mathbb N\cup \set{0}$ satisfy $\frac{k-m}{2}\in \mathbb N\cup \set{0}$ and $\sigma$ is a multi-index of dimension $n-2$. The functions
	\begin{equation*}
		\rho^{2j+\frac{k}{\beta}}\cos m\theta \xi^\sigma,\rho^{2j+\frac{k}{\beta}}\sin m\theta \xi^\sigma
	\end{equation*}
	are called (monic) {\bf $\mathcal T$-monomials} of degree $2j+\frac{k}{\beta}+\abs{\sigma}$. A {\bf $\mathcal T$-polynomial} is a finite linear combination of $\mathcal T$-monomials.
\end{defn}
It is elementary to check that the product of $\mathcal T$-polynomials is $\mathcal T$-polynomial.

\begin{rem}
Our previous experience in PDE's with conical singularity suggests that	the regularity of solutions near a singular point (like the cone singularity in $X_\beta$) depends both on the singularity of space and on the type of PDE that we are working with.

While the above definition of $\mathcal T$-polynomial suffices (see \cite{yin2016analysis}) for the study of nonlinear equations like
\begin{equation*}
	\triangle_\beta u =F(u).
\end{equation*}
It is not enough for the conical complex Monge-Ampere equation studied in \cite{yin2016expansion}. To minimize the difficulty of understanding this paper, we refrain from working in that generality. Instead, we will list below a family of properties that should be satisfied by $\mathcal T$-polynomials. It will be clear in the proof that follows, if these properties hold, the main result of this paper remains true for other definitions of $\mathcal T$-polynomials.
\end{rem}

Our definition of $\mathcal T$-polynomial satisfies a family of properties, which we summarize in the form of a lemma.

\begin{lem}
	\label{lem:PPP}
(P1) Let $f$ be a monic $\mathcal T$-monomial of degree $q$. For any $q'<q$, $l\in \mathbb N\cup \set{0}$ and any point $x\in X_\beta$ satisfying $\rho(x)<1/2$ and $\xi(x)=0$,
\begin{equation*}
	\norm{S_x(f)}_{C^l(\tilde{B})}\leq C(q,l,q') \rho(x)^{q'}.
\end{equation*}

(P2) Let $f$ be a monic $\mathcal T$-monomial of degree $q$. There is a constant depending only on $q$ and $\delta>0$ such that for any $l \in \mathbb N\cup \set{0}$,
\begin{equation*}
	\norm{f}_{C^l(\Omega_\delta \cap \hat{B}_1)}\leq C(q,l,\delta).	
\end{equation*}

(P3) Let $f$ be a $\mathcal T$-polynomial of degree $q$. There is a constant $C$ depending only on $q$ such that
\begin{equation*}
	\norm{f}_{C^{l}(\tilde{B}_{1/8}(z))} \leq C(l,q) \norm{f}_{C^0(\tilde{B}_{1/8}(z))}
\end{equation*}
for any $l \in \mathbb N\cup \set{0}$ and all $z\in \tilde{B}_{1/2}$.

(P4) Let $f$ be any $\mathcal T$-polynomial of degree $q$. There exists a $\mathcal T$-polynomial $u$ of degree $q+2$ (not unique) such that
\begin{equation*}
	\triangle_\beta u =f.
\end{equation*}
Moreover, the coefficients of $u$ are bounded by a multiple (depending on $q$) of the coefficients of $f$.
\end{lem}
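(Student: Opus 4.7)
The plan is to treat (P1) and (P2) by direct computation from the definitions of a $\mathcal{T}$-monomial and of the scaling map $S_x$; (P3) by a finite-dimensionality plus compactness argument; and (P4) by an explicit algebraic construction based on separation of variables.

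For (P1), since $\xi(x)=0$, the inverse map is $\Psi_x^{-1}(\tilde\rho,\tilde\theta,\tilde\xi)=(\rho(x)\tilde\rho,\tilde\theta+\theta_0,\rho(x)\tilde\xi)$. Plugging this into a monic $\mathcal{T}$-monomial $f=\rho^{2j+k/\beta}\cos m\theta\,\xi^\sigma$ of degree $q=2j+k/\beta+|\sigma|$, one computes
\begin{equation*}
S_x(f)(y)=\rho(x)^{q}\,\tilde\rho^{2j+k/\beta}\cos m(\tilde\theta+\theta_0)\,\tilde\xi^\sigma,
\end{equation*}
which is again a (bounded linear combination of) $\mathcal{T}$-monomials of the same parameters. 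On $\tilde B$ one stays away from the singular set, so its $C^l$-norm is bounded by a constant $C(q,l)$. Writing $\rho(x)^q=\rho(x)^{q-q'}\rho(x)^{q'}\le (1/2)^{q-q'}\rho(x)^{q'}$ gives the claim. Property (P2) is even easier: on $\Omega_\delta\cap\hat B_1$ the coordinates $(\rho,\theta,\xi)$ parametrize a smooth compact piece (away from $\mathcal S$), and a monic $\mathcal{T}$-monomial of degree $q$ is manifestly smooth with the $C^l$-norm controlled by $q$, $l$, $\delta$.

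For (P3), the key observation is that the space $V_q$ of $\mathcal{T}$-polynomials of degree $\le q$ is finite-dimensional (since only finitely many triples $(j,k,\sigma)$ with $2j+k/\beta+|\sigma|\le q$ and $m\le k$, $k-m$ even contribute). Any $\mathcal{T}$-polynomial is real-analytic on $X_\beta\setminus\mathcal S$, so restriction to any nonempty open subset of the smooth part is injective. Thus for each fixed $z$, the map $f\mapsto \|f\|_{C^0(\tilde B_{1/8}(z))}$ is a norm on $V_q$, equivalent to $\|\cdot\|_{C^l(\tilde B_{1/8}(z))}$ with some constant $C(z,l,q)$. To upgrade to a uniform constant on $z\in\tilde B_{1/2}$, I argue by contradiction: if $z_n\to z_\infty$ and $f_n\in V_q$ with $\|f_n\|_{C^0(\tilde B_{1/8}(z_n))}=1$ but $\|f_n\|_{C^l(\tilde B_{1/8}(z_n))}\to\infty$, then for large $n$ one has $\tilde B_{1/16}(z_\infty)\subset \tilde B_{1/8}(z_n)$, so $\|f_n\|_{C^0(\tilde B_{1/16}(z_\infty))}\le 1$. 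Finite-dimensionality of $V_q$ and analytic injectivity make this a bounded family in $V_q$, hence up to subsequence $f_n\to f$ in every $C^l$-norm on compact subsets of the smooth part; the limit $f$ has finite $C^l$-norm on $\tilde B_{1/8}(z_\infty)$, contradicting the blow-up.

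For (P4), by linearity it suffices to solve $\triangle_\beta u=f$ when $f=\rho^a\cos m\theta\,\xi^\sigma$ is a monic monomial with $a=2j+k/\beta$ and $k-m\in 2\mathbb{N}\cup\{0\}$. In the $(\rho,\theta,\xi)$ coordinates,
\begin{equation*}
\triangle_\beta(\rho^{a+2}\cos m\theta\,\xi^\sigma)=\bigl[(a+2)^2-m^2/\beta^2\bigr]\rho^{a}\cos m\theta\,\xi^\sigma+\rho^{a+2}\cos m\theta\,\triangle_\xi\xi^\sigma.
\end{equation*}
The indicial factor $(a+2)^2-m^2/\beta^2$ never vanishes: $a+2=2(j+1)+k/\beta\ge 2$ while $m/\beta\le k/\beta<a+2$ when $k\ge m$ and $j\ge 0$, so the factor is strictly positive. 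Hence $u_0=c_0\rho^{a+2}\cos m\theta\,\xi^\sigma$ with $c_0=1/[(a+2)^2-m^2/\beta^2]$ kills the leading term of $f$, leaving a $\mathcal T$-polynomial remainder of degree $q$ involving $\triangle_\xi\xi^\sigma$, whose $\xi$-degree has dropped by $2$. Iterating with $u_N=c_N\rho^{a+2+2N}\cos m\theta\,\triangle_\xi^N\xi^\sigma$ terminates once $2N>|\sigma|$, yielding a $\mathcal T$-polynomial $u=\sum_N u_N$ of degree $q+2$ with $\triangle_\beta u=f$, and the coefficients are explicit rational expressions in the indicial factors, all bounded in terms of $q$ and $\beta$. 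The same procedure applies to $\sin m\theta$. This handles (P4).

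The main technical point is the indicial computation in (P4), which crucially uses the constraint $k\ge m$ built into the definition of a $\mathcal T$-monomial; without it the equation $(a+2)^2=m^2/\beta^2$ could have solutions for bad values of $\beta$ and the algebraic construction would break down.
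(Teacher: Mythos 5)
Your proposal is correct and follows essentially the same route as the paper: direct computation from the scaling map for (P1)--(P2), finite-dimensionality and linear independence of $\mathcal T$-monomials (made uniform in $z$ by compactness) for (P3), and for (P4) the induction on the $\xi$-degree driven by the identity $\triangle_\beta(\rho^{a+2}\cos m\theta\,\xi^\sigma)=[(a+2)^2-m^2/\beta^2]\rho^{a}\cos m\theta\,\xi^\sigma+\rho^{a+2}\cos m\theta\,\triangle_\xi\xi^\sigma$. Your explicit verification that the indicial factor never vanishes (using $m\le k$ from the definition of a $\mathcal T$-monomial) is exactly the point the paper leaves implicit, and it is carried out correctly.
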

\begin{proof}
(P1) and (P2) can be checked by direct computation. Notice that (P1) is true even if $q'=q$ and we have stated it in this weaker form, because we have in our mind 	monomials involving $\log$ terms (for example $\rho \log \rho$), which may appear in applications.

For (P3), recall that the number of (monic) $\mathcal T$-monomials with degree no more than $q$ is finite and that they are linearly independent functions so that the $C^0$ norm of a $\mathcal T$-polynomial on any open set bounds every coefficient.

The proof of (P4) is an induction argument based on the fact that $\triangle_\beta \rho^{\gamma+2}\cos m\theta \xi^\sigma$ is a linear combination of $\rho^\gamma \cos m\theta \xi^\sigma$ and a $\mathcal T$-polynomial whose order in $\xi$ is smaller than $\abs{\sigma}$ by $2$.
\end{proof}

\subsection{The definition.}

In this section, we define the generalized H\"older space $\mathcal U^q$. We assume that $q$ is any positive number that is {\bf not in}
\begin{equation*}
	\mathcal D=\set{j+\frac{k}{\beta}|\quad j,k\in \mathbb N\cup \set{0}},
\end{equation*}
which is the set of degrees of $\mathcal T$-monomials.

The overall idea is to require that $u$ is $C^{k,\alpha}$ in the usual sense in $\Omega_\delta \cap \hat{B}_1$ and for each $x\in \mathcal S\cap \hat{B}_1$, $u$ has some uniform expansion (using $\mathcal T$-polynomials) in a ball of size $\delta$.

\begin{defn}
	\label{defn:big}
Suppose that $q=k+\alpha$ for some $k\in \mathbb N\cup \set{0}$ and $\alpha\in (0,1)$. $u$ is said to be in $\mathcal U^q(\hat{B}_1)$ if and only if there is some $\Lambda>0$ such that

(H1) $u$ is $C^{k,\alpha}$ on $\hat{B}_1\cap \Omega_\delta$ and
\begin{equation*}
	\norm{u}_{C^{k,\alpha}(\hat{B}_1\cap \Omega_\delta)}\leq \Lambda.
\end{equation*}

(H2) For each $x\in \mathcal S\cap \hat{B}_1$, there is a $\mathcal T$-polynomial $P_x$ such that (i) the degree of $P_x$ is smaller than $q$ and the coefficients of $P_x$ is bounded by $\Lambda$ and (ii)
\begin{equation*}
u(x+y)=P_x(y) + O(d(y)^q),\quad \forall y\in \hat{B}_\delta,
\end{equation*}
where $O(d(y)^q)$ above satisfies
\begin{equation*}
	\abs{O(d(y)^q)} \leq \Lambda d(y)^q.
\end{equation*}

(H3) For each $x\in S_\delta\cap \hat{B}_1$, let $\tilde{x}$ be the projection of $x$ to $\mathcal S$.
\begin{equation}\label{eqn:h3}
	\norm{S_x(u-P_{\tilde{x}}(\cdot -\tilde{x}))}_{C^{k,\alpha}(\tilde{B}_{1/2})}\leq \Lambda \rho(x)^q.
\end{equation}
\end{defn}
The infimum of $\Lambda$ such that (H1-H3) hold for some $u\in \mathcal U^q(\hat{B}_1)$ is defined to be the norm of $u$, denoted by $\norm{u}_{\mathcal U^q(\hat{B}_1)}$.

(H1-H2) is in line with the overall idea mentioned before Definition \ref{defn:big}. (H3) is necessary to describe the behavior of the functions in $\mathcal U^q(\hat{B}_1)$ at those points that are closer and closer to the singular set. Definition \ref{defn:big} may look unusual, but it will be justified when we show that Donaldson's $C^{2,\alpha}_\beta$ is a special case of $\mathcal U^q$ in Section \ref{subsec:compare} and when we prove a Schauder estimate in Section \ref{sec:schauder}.

Several remarks are helpful in understanding the defnition.

\begin{rem}\label{rem:smaller}
	We need to check that for $0<q_1<q_2$ ($q_1,q_2\notin \mathcal D$), $\mathcal U^{q_2}(\hat{B}_1)\subset \mathcal U^{q_1}(\hat{B}_1)$, which is not totally trivial from the definition above. To see this, let $u$ be in $\mathcal U^{q_2}(\hat{B}_1)$, it suffices to check (H2) and (H3) in the definition of $\mathcal U^{q_1}(\hat{B}_1)$. For (H2), let $x\in \mathcal S\cap \hat{B}_1$, then there is some $\mathcal T$-polynomial $P_x$ such that
	\begin{equation*}
		u(x+y)=P_x(y)+O(d(y)^{q_2}) \qquad \forall y\in \hat{B}_\delta.
	\end{equation*}
	Let $P'_x$ be the part of $P_x$ that involves only monomials of degree smaller than $q_1$. Let $Q_x=P_x-P'_x$. Obviously,
	\begin{equation*}
		u(x+y)=P'_x(y) + O(d(y)^{q_1}) \qquad \forall y\in \hat{B}_\delta.
	\end{equation*}
	For (H3), it suffices to check that for each $x\in \mathcal S_\delta\cap \hat{B}_1$ with $\tilde{x}$ being its projection to $\mathcal S$, $Q_{\tilde{x}}$ satisfies
	\begin{equation}\label{eqn:checkQ}
		\norm{S_x(Q_{\tilde{x}}(\cdot -\tilde{x}))}_{C^{k_1,\alpha_1}(\tilde{B}_{1/2})}\leq C\Lambda \rho(x)^{q_1},
	\end{equation}
	if $q_1=k_1+\alpha_1$.
	In fact, $Q_{\tilde{x}}$ is a finite linear combination of $\mathcal T$-monomials of degree strictly larger than $q_1$ and (H2) implies that the coefficients of the combination are bounded by $\Lambda$. Hence, \eqref{eqn:checkQ} follows from (P1).
\end{rem}

\begin{rem}
	\label{rem:delta}
	Due to (P2) above, the definition of $\mathcal U^q(\hat{B}_1)$ is independent of the constant $\delta$. A different choice of $\delta$ yields an equivalent norm $\norm{\cdot}_{\mathcal U^q}$.
\end{rem}

\begin{rem}
	\label{rem:final}
Finally, we remark that \eqref{eqn:h3} in (H3) of Definition \ref{defn:big} can be replaced by
\begin{equation}\label{eqn:h3p}
	\norm{S_x(u-P_{\tilde{x}}(\cdot-\tilde{x}))(z)}_{C^{k,\alpha}(\tilde{B}_{3/8})}\leq \Lambda \rho(x)^q.
\end{equation}
\end{rem}
This may look plausible, however, we give a detailed proof, because we shall need it explicitly in the proof of Theorem \ref{thm:schauder}. In the sequel, we shall use (H3') for the assumption (H3) with \eqref{eqn:h3} replaced by \eqref{eqn:h3p}. Assume that we have a function $u$ satisfying (H1), (H2) and (H3').

Since (H3) only matters when $\rho(x)$ is small, we fix $x\in S_{\delta/2} \cap \hat{B}_1$. By (H2),
\begin{equation}\label{eqn:SPx}
	\sup_{\tilde{B}} \abs{S_x(u-P_{\tilde{x}}(\cdot-\tilde{x}))} = \sup_{\hat{B}_{c_\beta \rho(x)}(x)} \abs{u- P_{\tilde{x}}(\cdot-\tilde{x})} \leq C\Lambda \rho(x)^q.
\end{equation}
For any $y\in \hat{B}_{c_\beta \rho(x)/2}(x)=\tilde{B}_{1/2}(x)$, (H3') implies that
\begin{equation*}
	\norm{S_y (u-P_{\tilde{y}}(\cdot-\tilde{y}))(z)}_{C^{k,\alpha}(\tilde{B}_{3/8})}\leq \Lambda \rho(y)^q \leq C\Lambda \rho(x)^q.
\end{equation*}
By the definition of $S_x$ and $S_y$,
\begin{equation}\label{eqn:Sxy}
	\begin{split}
		S_x(u-P_{\tilde{y}}(\cdot-\tilde{y}))(z) &= (u-P_{\tilde{y}}(\cdot-\tilde{y}))(\Psi_x^{-1}(z)) \\
	&= (u-P_{\tilde{y}}(\cdot-\tilde{y}))(\Psi_y^{-1} \circ \Psi_y\circ \Psi_x^{-1}(z)) \\
	&= S_y(u-P_{\tilde{y}}(\cdot-\tilde{y})) (\Psi_y\circ \Psi_x^{-1}(z)).
	\end{split}
\end{equation}
If $z=(\rho_z,\theta_z,\xi_z)$, we compute explicitly
\begin{equation}\label{eqn:psi}
	\begin{split}
	\Psi_y\circ \Psi_x^{-1} (z) &= \Psi_y (\rho_x \rho_z, \theta_x+\theta_z, \xi_x+\rho_x\xi_z) \\
	&= ( \frac{\rho_x}{\rho_y}\rho_z, \theta_z +\theta_x-\theta_y, \frac{\rho_x \xi_z+\xi_x-\xi_y}{\rho_y}).
	\end{split}
\end{equation}
Obviously when $z=\Psi_x(y)\in \tilde{B}_{1/2}$, $\Psi_y \circ \Psi_x^{-1}(z)=\tilde{x}_0$. Since $y\in \hat{B}_{c_\beta \rho(x)/2}(x)$ (so that $1/2< \rho_x/\rho_y<2$), we know $\Psi_y\circ \Psi_x^{-1}$ is a Lipschitz map with Lipschitz constant smaller than $2$, which implies that
\begin{equation*}
	\Psi_y\circ \Psi_x^{-1} (\tilde{B}_{1/8}(z))\subset \tilde{B}_{3/8}.
\end{equation*}
Noticing that the map $\Psi_y\circ \Psi_x^{-1}$ is uniformly bounded (for all $y\in \tilde{B}_{1/2}(x)$) in any $C^k$ norm on $\tilde{B}_{1/8}(z)$, \eqref{eqn:Sxy} implies that
\begin{equation}\label{eqn:SPy}
	\norm{S_x(u-P_{\tilde{y}}(\cdot-\tilde{y}))}_{C^{k,\alpha}(\tilde{B}_{1/8}(z))} \leq C\Lambda \rho(x)^q.
\end{equation}
We claim that
\begin{equation}\label{eqn:SP}
	\norm{S_x(u-P_{\tilde{x}}(\cdot-\tilde{x}))}_{C^{k,\alpha}(\tilde{B}_{1/8}(z))} \leq C\Lambda \rho(x)^q.
\end{equation}
(H3) follows from \eqref{eqn:SP} because $y$ is any point in $\tilde{B}_{1/2}(x)$, $z=\Psi_x(y)$ and
\begin{equation*}
	\tilde{B}_{1/2} \subset \bigcup_{y\in \tilde{B}_{1/2(x)}} \tilde{B}_{1/8}(z).
\end{equation*}
By comparing \eqref{eqn:SPy} and \eqref{eqn:SP}, the proof of the above claim reduces to
\begin{equation}\label{eqn:down1}
	\norm{S_x(P_{\tilde{y}}(\cdot-\tilde{y})-P_{\tilde{x}}(\cdot-\tilde{x}))}_{C^{k,\alpha}(\tilde{B}_{1/8}(z))} \leq C\Lambda \rho(x)^q.
\end{equation}
Since $S_x(P_{\tilde{y}}(\cdot-\tilde{y})-P_{\tilde{x}}(\cdot-\tilde{x}))$ is a $\mathcal T$-polynomial with degree smaller than $q$, (P3) implies that \eqref{eqn:down1} follows from 
\begin{equation}\label{eqn:down2}
	\norm{S_x(P_{\tilde{y}}(\cdot-\tilde{y})-P_{\tilde{x}}(\cdot-\tilde{x}))}_{C^{0}(\tilde{B}_{1/8}(z))} \leq C\Lambda \rho(x)^q.
\end{equation}
Recall that $z=\Psi_x(y)$, hence $\Psi_x$ maps $\tilde{B}_{1/8}(z)$ to $\tilde{B}_{1/8}(y)\subset \tilde{B}(x)$. (H2) implies respectively
\begin{equation*}
	\norm{u-P_{\tilde{y}(\cdot-\tilde{y})}}_{C^0(\tilde{B}_{1/8}(y))} \leq C\Lambda \rho(y)^q \leq C\Lambda \rho(x)^q
\end{equation*}
and
\begin{equation*}
	\norm{u-P_{\tilde{x}(\cdot-\tilde{x})}}_{C^0(\tilde{B}(x))}\leq C\Lambda \rho(x)^q.
\end{equation*}
Then \eqref{eqn:down2} follows from a combination of the above two inequalities and the fact that $\tilde{B}_{1/8}(y)\subset \tilde{B}(x)$.

\subsection{Comparison with Donaldson's $C^{2,\alpha}_\beta$ space}\label{subsec:compare}
Donaldson's definition requires both $0<\beta<1$ and $\alpha< \min\set{\frac{1}{\beta}-1,1}$, which we assume in this subsection only. It is the purpose of this subsection to show that the space $\mathcal U^{2+\alpha}$ is equivalent to $C^{2,\alpha}_\beta$ when the above restriction to $\beta$ and $\alpha$ applies.

In this case, the only monic $\mathcal T$-monomials whose order is smaller than $2+\alpha$ are
\begin{equation}\label{eqn:monic}
	1, \rho^{\frac{1}{\beta}}\cos \theta, \rho^{\frac{1}{\beta}}\sin \theta,\rho^2, \xi, \xi^2.
\end{equation}

First, let's recall the definition of $C^{2,\alpha}_\beta$ space defined by Donaldson. It is defined to be the set of functions satisfying
\begin{enumerate}[(D1)]
	\item $u$ is in $C^\alpha(\hat{B}_1)$;
	\item $\partial_\rho u$, $\frac{1}{\rho} \partial_\theta u$ and $\partial_\xi u$ are in $C^\alpha(\hat{B}_1)$;
	\item $\partial_\xi^2 u$, $\partial_\xi \partial_\rho u$, $\frac{1}{\rho}\partial_\xi \partial_\theta u$ and $\tilde{\triangle}_\beta u$ are in $C^{\alpha}(\hat{B}_1)$, where $\tilde{\triangle}_\beta= \partial_\rho^2 + \frac{1}{\rho}\partial_\rho + \frac{1}{\beta^2 \rho^2} \partial_\theta^2$ is the Laplacian on the cone surface of cone angle $2\pi \beta$.
\end{enumerate}
Here $C^\alpha(\hat{B}_1)$ is the space of H\"older continuous functions with respect to the distance $d$ of $X^\beta$. Moreover, the $C^{2,\alpha}_\beta$ norm is the sum of all $C^\alpha$ norms mentioned above.

We start the comparison with the following lemma,
\begin{lem}
	\label{lem:c1} 
	(i) If $u$ is $C^\alpha(\hat{B}_2)$, then $u$ is in $\mathcal U^{\alpha}(\hat{B}_1)$ with
	\begin{equation}\label{eqn:c1}
		\norm{u}_{\mathcal U^\alpha(\hat{B}_1)}\leq C \norm{u}_{C^\alpha(\hat{B}_2)};
	\end{equation}
	and (ii) if $u$ is in $\mathcal U^\alpha (\hat{B}_2)$, then $u$ is in $C^\alpha(\hat{B}_1)$ with
	\begin{equation}\label{eqn:c2}
		\norm{u}_{C^\alpha(\hat{B}_1)}\leq C \norm{u}_{\mathcal U^\alpha(\hat{B}_2)}.
	\end{equation}
\end{lem}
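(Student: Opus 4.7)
The plan is to unpack Definition \ref{defn:big} in the special case $k=0$, $q=\alpha\in(0,1)$. Under Donaldson's restriction $\alpha<\min\set{1,\frac{1}{\beta}-1}$, one checks that the only $\mathcal T$-monomial of degree strictly less than $\alpha$ is the constant $1$, so every polynomial $P_x$ appearing in (H2) is a single real number; by letting $y\to 0$ in (H2), this number must equal the value of $u$ at the singular point $x$. This reduces both (H2) and (H3) to statements about the pointwise values of $u$.

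For part (i), let $u\in C^\alpha(\hat{B}_2)$. (H1) is immediate since $\hat{B}_1\cap \Omega_\delta\subset \hat{B}_2$. For (H2), take $P_x\equiv u(x)$; translation invariance in $\xi$ gives $d(x+y,x)=d(y)$, hence $\abs{u(x+y)-u(x)}\leq \norm{u}_{C^\alpha(\hat{B}_2)}d(y)^\alpha$. For (H3), at $x\in \mathcal S_\delta\cap \hat{B}_1$ with projection $\tilde{x}$, the map $\Psi_x^{-1}\colon \tilde{B}_{1/2}\to \hat{B}_{c_\beta\rho(x)/2}(x)$ rescales $g_\beta$-distances by the factor $\rho(x)$, so
\begin{equation*}
	[S_x(u-u(\tilde{x}))]_{C^\alpha(\tilde{B}_{1/2})}\leq \rho(x)^\alpha [u]_{C^\alpha(\hat{B}_2)},
\end{equation*}
while the $C^0$ part of \eqref{eqn:h3} follows from the triangle-inequality bound $d(\Psi_x^{-1}(z),\tilde{x})\leq C\rho(x)$ for $z\in \tilde{B}_{1/2}$.

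For part (ii), let $u\in \mathcal U^\alpha(\hat{B}_2)$ with norm $\Lambda$. Combining (H1) with (H2)--(H3) (which force $\abs{u(\tilde{x})}\leq \Lambda$ at every $\tilde{x}\in \mathcal S$ and control $u$ on $\mathcal S_\delta$ by $\Lambda(1+\delta^\alpha)$) yields $\norm{u}_{C^0(\hat{B}_1)}\leq C\Lambda$, which handles $\abs{u(x_1)-u(x_2)}\leq C\Lambda d(x_1,x_2)^\alpha$ whenever $r:=d(x_1,x_2)$ is bounded below. For small $r$, assume WLOG $\rho(x_1)\leq \rho(x_2)$ and split into cases. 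In Case A, $\rho(x_1)$ is a sufficiently large multiple of $r$ (depending on $c_\beta$) so that $\Psi_{x_1}(x_2)\in \tilde{B}_{1/2}$: then the constant $P_{\tilde{x}_1}$ cancels in the difference, and (H3) together with $d(\Psi_{x_1}(x_1),\Psi_{x_1}(x_2))=r/\rho(x_1)$ gives
\begin{equation*}
	\abs{u(x_1)-u(x_2)}\leq \Lambda \rho(x_1)^\alpha\cdot (r/\rho(x_1))^\alpha=\Lambda r^\alpha.
\end{equation*}
In Case B, $\rho(x_1)=O(r)$, so also $\rho(x_2)\leq \rho(x_1)+r=O(r)$; letting $\tilde{x}_i$ be the projections,
\begin{equation*}
	\abs{u(x_1)-u(x_2)}\leq \abs{u(x_1)-u(\tilde{x}_1)}+\abs{u(\tilde{x}_1)-u(\tilde{x}_2)}+\abs{u(\tilde{x}_2)-u(x_2)},
\end{equation*}
and three applications of (H2) (all relevant distances being $\leq 9r$) bound each term by $C\Lambda r^\alpha$. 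The remaining possibility $\rho(x_1)\geq \delta$ puts both points in $\hat{B}_2\cap \Omega_\delta$, where (H1) finishes the estimate.

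The main technical obstacle is matching the scaling constant $c_\beta$, which governs the size of the ball $\tilde{B}_{1/2}$ in (H3), to the threshold in the Case A/Case B split: the requirement $\Psi_{x_1}(x_2)\in \tilde{B}_{1/2}$ forces $r/\rho(x_1)<c_\beta/2$, so the threshold and the final constant $C$ pick up a $\beta$-dependence. Everything else is bookkeeping with triangle inequalities and the Lipschitz behaviour of $\Psi_x$; the fact that $u$ has a well-defined pointwise value $c_{\tilde{x}}$ at $\tilde{x}\in \mathcal S$ follows from (H2) by letting $y\to 0$, and in particular justifies the identifications used above.
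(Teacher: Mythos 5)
Your proof is correct and follows essentially the same route as the paper's: part (i) via $P_x\equiv u(x)$ and the scaling factor $\rho(x)$ in (H3), and part (ii) via the same dichotomy between $d(x_1,x_2)$ small or large compared to $c_\beta\rho(x_i)$, using (H3) with the scaling map in the first case and (H2) with the triangle inequality through the projections $\tilde{x}_i$ in the second. The only cosmetic difference is that you base the scaling map at the point with smaller $\rho$ while the paper uses the larger one, which changes nothing.
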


\begin{proof}
	(i) We notice that (H1) is trivial and (H2) is just the definition of H\"older continuous if we choose $P_x$ to be the constant $u(x)$. For $x\in \mathcal S_\delta\cap \hat{B}_1$ and any $z\in \tilde{B}_{1/2}$, since $P_{\tilde{x}}$ is the constant $u(\tilde{x})$, we have
\begin{eqnarray*}
	S_x(u-P_{\tilde{x}}(\cdot - \tilde{x}))(z)&=&  (u-P_{\tilde{x}}(\cdot-\tilde{x}))(\Psi_x^{-1}(z))\\
	&=& u(\Psi_x^{-1}(z)) - u(\tilde{x}).
\end{eqnarray*}
For any $z\in \tilde{B}_{1/2}$, the distance from $\Psi_x^{-1}(z)$ to $\tilde{x}$ is at most $2\rho(x)$ and hence
\begin{equation}\label{eqn:c11}
	\norm{S_x(u-P_{\tilde{x}}(\cdot-\tilde{x}))}_{C^0(\tilde{B}_{1/2})}\leq C \norm{u}_{C^\alpha(\hat{B}_2)}\rho(x)^\alpha.
\end{equation}
For any $z_1,z_2$ in $\tilde{B}_{1/2}$,
\begin{equation}
	\label{eqn:c12}
	\begin{split}
		&\abs{S_x(u-P_{\tilde{x}}(\cdot-\tilde{x}))(z_1)-S_x(u-P_{\tilde{x}}(\cdot-\tilde{x}))(z_2)}\\
	=&  \abs{u(\Psi_x^{-1}(z_1)) - u(\Psi_x^{-1}(z_2))} \\
	\leq& \norm{u}_{C^\alpha(\hat{B}_2)} \left( \rho(x)d(z_1,z_2)\right)^\alpha.
	\end{split}
\end{equation}
Hence, (H3) and then \eqref{eqn:c1} follow from \eqref{eqn:c11} and \eqref{eqn:c12}.

(ii) By (H1), it suffices to consider $x_1$ and $x_2$ in $\mathcal S_\delta\cap \hat{B}_1$. Assume that $\rho(x_1)\geq \rho(x_2)$.

{\bf If $d(x_1,x_2)\geq \frac{c_\beta \rho(x_1)}{2}$:}

Let $\tilde{x}_1$ and $\tilde{x}_2$ be the projections of $x_1$ and $x_2$ to $\mathcal S$ respectively. The triangle inequality and (H2) imply that
\begin{eqnarray*}
	\abs{u(x_1)-u(x_2)} &\leq& \abs{u(x_1)-u(\tilde{x}_1)} + \abs{u(x_2)-u(\tilde{x}_2)} + \abs{u(\tilde{x}_1)- u(\tilde{x}_2)} \\
	&\leq& \norm{u}_{\mathcal U^\alpha (\hat{B}_2)} \left( \rho(x_1)^\alpha + \rho(x_2)^\alpha + d(\tilde{x}_1,\tilde{x}_2)^\alpha \right) \\
	&\leq & C \norm{u}_{\mathcal U^\alpha (\hat{B}_2)} d(x_1,x_2)^\alpha.
\end{eqnarray*}

{\bf If $d(x_1,x_2)< \frac{c_\beta \rho(x_1)}{2}$:} denote $\Psi_{x_1}(x_2)$ by $z$, which is a point in $\tilde{B}_{1/2}$.

\begin{eqnarray*}
	\abs{u(x_1)-u(x_2)} &=& \abs{S_{x_1}(u)(\tilde{x}_0) - S_{x_1}(u)(z)} \\
	&\leq& \abs{S_{x_1}(u-P_{\tilde{x}_1}(\cdot -\tilde{x}_1))(\tilde{x}_0)-S_{x_1}(u-P_{\tilde{x}_1}(\cdot -\tilde{x}_1))(z) } \\
	&\leq & C \norm{u}_{\mathcal U^\alpha(\hat{B}_2)} \rho(x_1)^\alpha d(\tilde{x}_0,z)^\alpha \\
	&\leq& C \norm{u}_{\mathcal U^\alpha(\hat{B}_2)} d(x_1,x_2)^\alpha.
\end{eqnarray*}
Here in the second line above, we used (H3).
\end{proof}

To conclude this section, we prove
\begin{lem}
	\label{lem:c2} If $u$ is in $\mathcal U^{2+\alpha}(\hat{B}_2)$, then $u$ is in $C^{2,\alpha}_\beta(\hat{B}_1)$ and
	\begin{equation*}
		\norm{u}_{C^{2,\alpha}_\beta(\hat{B}_1)} \leq C \norm{u}_{\mathcal U^{2+\alpha}(\hat{B}_2)}.
	\end{equation*}
\end{lem}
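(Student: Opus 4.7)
The plan is to verify the three conditions (D1)--(D3) in turn.  (D1) is immediate: by Remark \ref{rem:smaller}, $\mathcal U^{2+\alpha}(\hat B_2)\subset \mathcal U^{\alpha}(\hat B_2)$, and then Lemma \ref{lem:c1}(ii) gives the $C^\alpha$ estimate.

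For (D2)--(D3), the central observation is that under the standing assumption $0<\beta<1$ and $\alpha<\min\{1/\beta-1,1\}$, the list \eqref{eqn:monic} exhausts the $\mathcal T$-monomials of degree strictly less than $2+\alpha$.  Consequently, at each $\tilde x\in \mathcal S\cap\hat B_1$ the polynomial $P_{\tilde x}$ has the explicit form
\[
P_{\tilde x}(y_\rho,y_\theta,y_\xi)=c_0+c_1 y_\rho^{1/\beta}\cos y_\theta+c_2 y_\rho^{1/\beta}\sin y_\theta+c_3 y_\rho^2+A\cdot y_\xi+y_\xi^{\top} B\, y_\xi,
\]
with coefficients depending on $\tilde x$ and bounded by $\Lambda:=\|u\|_{\mathcal U^{2+\alpha}(\hat B_2)}$.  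A direct computation shows that when any of Donaldson's seven ``good'' differential operators is applied to $P_{\tilde x}$, the output extends to a $C^\alpha$ function of $(y_\rho,y_\theta,y_\xi)$ up to and including $y_\rho=0$: the only potentially singular pieces are $\rho^{1/\beta-1}\cos\theta$ and $\rho^{1/\beta-1}\sin\theta$, whose H\"older exponent $1/\beta-1$ exceeds $\alpha$ by hypothesis; meanwhile $\partial_\xi P_{\tilde x}$ is affine in $y_\xi$, $\partial_\xi^2 P_{\tilde x}=2B$ is constant, the mixed derivatives $\partial_\rho\partial_\xi P_{\tilde x}$ and $\tfrac{1}{\rho}\partial_\theta\partial_\xi P_{\tilde x}$ both vanish by the block separation of $(\rho,\theta)$-terms and $\xi$-terms, and $\tilde\triangle_\beta P_{\tilde x}=4c_3(\tilde x)$ since the $\rho^{1/\beta}\cos\theta$ and $\rho^{1/\beta}\sin\theta$ terms are themselves $\tilde\triangle_\beta$-harmonic.

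The error $u-P_{\tilde x}(\cdot-\tilde x)$ is handled by (H3).  For $x\in\mathcal S_\delta\cap\hat B_1$ with projection $\tilde x$, condition (H3) gives $\|S_x(u-P_{\tilde x}(\cdot-\tilde x))\|_{C^{2,\alpha}(\tilde B_{1/2})}\le \Lambda\rho(x)^{2+\alpha}$.  Unscaling by $\Psi_x$, the Euclidean derivatives $\partial_\rho,\partial_\xi$ each absorb a factor $1/\rho(x)$ (while $\partial_\theta$ absorbs none), so that after dividing by the additional $\rho$ in $\tfrac{1}{\rho}\partial_\theta$, or applying the second-order $\tilde\triangle_\beta$ which absorbs $1/\rho(x)^2$, each of the seven good derivatives of the error term is bounded pointwise by $\Lambda\rho(x)^\alpha$ on $\hat B_{c_\beta\rho(x)/2}(x)$, together with a matching scaled $C^\alpha$ seminorm estimate.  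In particular each good derivative $v$ of $u$ extends continuously to $\mathcal S$: $\partial_\rho u(\tilde x)=\tfrac{1}{\rho}\partial_\theta u(\tilde x)=\partial_\rho\partial_{\xi_i}u(\tilde x)=\tfrac{1}{\rho}\partial_\theta\partial_{\xi_i}u(\tilde x)=0$, $\partial_{\xi_i}u(\tilde x)=A_i(\tilde x)$, $\partial_{\xi_i}\partial_{\xi_j}u(\tilde x)=2B_{ij}(\tilde x)$, and $\tilde\triangle_\beta u(\tilde x)=4c_3(\tilde x)$.

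With $v$ well-defined pointwise, its $C^\alpha$ norm on $\hat B_1$ is proved by the trichotomy of Lemma \ref{lem:c1}(ii): (a) two points in $\Omega_{\delta}$ are handled by (H1); (b) two points $x_1,x_2$ with $d(x_1,x_2)\ge c_\beta\rho(x_1)/2$ go through the projections $\tilde x_1,\tilde x_2\in\mathcal S$ via triangle inequality, using the uniform pointwise bound on $v$ and the H\"older control of the coefficient functions $c_k,A,B$ along $\mathcal S$; (c) close points $d(x_1,x_2)<c_\beta\rho(x_1)/2$ are handled entirely inside $\tilde B(x_1)$, combining the explicit $C^\alpha$-bound on $v[P_{\tilde x_1}(\cdot -\tilde x_1)]$ from the second paragraph with the scaled $C^\alpha$ control of $v[u-P_{\tilde x_1}(\cdot -\tilde x_1)]$ from the third.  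The main obstacle, as in Lemma \ref{lem:c1}(ii), is case (b): it reduces to proving that the coefficients $c_k,A,B$ are $C^\alpha$ functions of $\tilde x\in\mathcal S$, which I would establish by writing (H2) at both $\tilde x_1$ and $\tilde x_2$, subtracting, evaluating at a common point $y$ with $d(y-\tilde x_i)\sim d(\tilde x_1,\tilde x_2)$, and invoking the linear independence of the monomials in \eqref{eqn:monic} (together with the $O(d^{2+\alpha})$ remainders) to read off each coefficient difference as $O(d(\tilde x_1,\tilde x_2)^\alpha)$.
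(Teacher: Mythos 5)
Your proposal is correct in outline and follows essentially the same route as the paper: (D1) via Remark \ref{rem:smaller} and Lemma \ref{lem:c1}; then the decomposition $u=P_{\tilde x}(\cdot-\tilde x)+\text{error}$, with the polynomial part controlled by inspecting the explicit monomials in \eqref{eqn:monic} and the error controlled by unscaling (H3), followed by the same two-case split $d(x_1,x_2)\lessgtr c_\beta\rho(x_1)/2$. The one place where you genuinely diverge is the H\"older continuity of the coefficients along $\mathcal S$. The paper only ever needs this for the $\xi$-block ($c_0$, $A$, $B$), and gets it by restricting $u$ to $\mathcal S$, observing that $\tilde u(\tilde x)=P_{\tilde x}(0)$ is UBE in the sense of Definition \ref{defn:ube} on $\Real^{n-2}$, and invoking Proposition \ref{prop:rn} to conclude $\tilde u\in C^{2,\alpha}$ with $\partial_\xi^2 P_{\tilde x}(0)=\partial_\xi^2\tilde u(\tilde x)$. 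Your Campanato-style subtraction of (H2) at $\tilde x_1,\tilde x_2$, evaluated on an annulus of radius $\sim d(\tilde x_1,\tilde x_2)$ and combined with equivalence of norms on the finite-dimensional space of $\mathcal T$-polynomials, gives the same conclusion for $c_0$, $A$, $B$ (indeed $|B(\tilde x_1)-B(\tilde x_2)|\le C\Lambda h^\alpha$ and $|A(\tilde x_1)-A(\tilde x_2)|\le C\Lambda h$ after accounting for the $\xi$-shift $P_{\tilde x_2}(y+\Delta\xi)$). Your route is more self-contained; the paper's reuses the machinery of Section \ref{sec:space}.

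One claim in your last paragraph is too strong and should be trimmed: the coefficients $c_1,c_2$ of $\rho^{1/\beta}\cos\theta$ and $\rho^{1/\beta}\sin\theta$ are \emph{not} in general $C^\alpha$ in $\tilde x$. Your subtraction argument only yields the exponent $2+\alpha-\tfrac{1}{\beta}$ for a coefficient of degree $\tfrac{1}{\beta}$, and when $\tfrac13<\beta<\tfrac12$ with $\tfrac{1}{\beta}-2<\alpha$ this exponent is strictly smaller than $\alpha$. This is harmless because those coefficients never need H\"older control: in each of Donaldson's seven operators the $\rho^{1/\beta}$-monomials either are annihilated (all $\xi$-derivatives, the mixed derivatives, and $\tilde\triangle_\beta$, since they are $\tilde\triangle_\beta$-harmonic) or contribute a term of size $O(\rho^{1/\beta-1})=O(\rho^\alpha)$ that is estimated at $x_1$ and $x_2$ separately using only boundedness of $c_1,c_2$ — which is exactly how the paper handles $\partial_\rho u$, $\tfrac{1}{\rho}\partial_\theta u$ and $\tilde\triangle_\beta u$ in the far-apart case. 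If you restrict your coefficient-regularity claim to $c_0$, $A$ and $B$, the argument closes.
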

\begin{rem}
	It is natural to ask about the other direction of Lemma \ref{lem:c2}. While it is possible to give a direct proof, we omit it because it follows from Lemma \ref{lem:c1} and the Schauder estimate (Theorem \ref{thm:schauder}), which is to be proved in the next section. So we conclude that $C^{2,\alpha}_\beta$ and $\mathcal U^{2+\alpha}$ are the same (in the sense above).
\end{rem}

\begin{proof}
	The proof consists of several steps.

	{\bf Step 1:}
	By Remark \ref{rem:smaller} and Lemma \ref{lem:c1}, we have $u\in C^\alpha(\hat{B}_1)$.	

	{\bf Step 2:} All derivatives listed in (D2) and (D3) are bounded. Since the proofs are the same, we prove the claim for $\partial_\rho u$ only. Thanks to (H1), it suffices to consider $x\in \mathcal S_\delta\cap \hat{B}_1$. Let $\tilde{x}$ be the projection of $x$ onto $\mathcal S$, then for the $\mathcal T$-polynomial $P_{\tilde{x}}$ (with order smaller than $2+\alpha$) in (H2), we have
	\begin{eqnarray}\label{eqn:decompose}
		\partial_\rho u(x) &=& \partial_\rho \left( u - P_{\tilde{x}}(\cdot-\tilde{x}) \right)(x) + \partial_\rho P_{\tilde{x}} (x-\tilde{x}).
	\end{eqnarray}
	The second term above is bounded by a multiple of $\norm{u}_{\mathcal U^{2+\alpha}(\hat{B}_2)}$, because of (H2) and the fact that the $\partial_\rho$ of each monic $\mathcal T$-monomial (listed in \eqref{eqn:monic}) is bounded. The first term is bounded by
	\begin{equation}\label{eqn:firstterm}
		\frac{1}{\rho} \abs{\nabla S_x(u-P_{\tilde{x}}(\cdot-\tilde{x}))}(\tilde{x}_0),
	\end{equation}
	which is in turn bounded by $C\norm{u}_{\mathcal U^{2+\alpha}(\hat{B}_2)}$ due to (H3).

	{\bf Step 3:}
	All derivatives of $u$ listed in (D2) and (D3) are in $C^\alpha$.

	As before, due to (H1), we may assume that $x_1,x_2\in \mathcal S_\delta\cap \hat{B}_1$ and $\rho(x_1)\geq \rho(x_2)$.

	{\bf If $d(x_1,x_2)< c_\beta\rho(x_1)/2$:}  Let $\tilde{x}_1$ be the projection of $x_1$ onto $\mathcal S$ and $z=\Psi_{x_1}(x_2)\in \tilde{B}_{1/2}$.
	\begin{eqnarray*}
		&& \abs{\partial_\rho u(x_1)- \partial_\rho u(x_2)} \\
		&\leq & \abs{\partial_\rho (u-P_{\tilde{x}_1}(\cdot-\tilde{x}_1)) (x_1)- \partial_\rho (u-P_{\tilde{x}_1}(\cdot-\tilde{x}_1)) (x_2)} + \abs{\partial_\rho P_{\tilde{x}_1}(x_1-\tilde{x}_1) - \partial_\rho P_{\tilde{x}_1}(x_2-\tilde{x}_1) } \\
		&\leq & \frac{1}{\rho(x_1)} \abs{\nabla S_{x_1}(u-P_{\tilde{x}_1}(\cdot-\tilde{x}_1))(\tilde{x}_0)-\nabla S_{x_1}(u-P_{\tilde{x}_1}(\cdot-\tilde{x}_1))(z) } + C\norm{u}_{\mathcal U^{2+\alpha}(\hat{B}_2)} d(x_1,x_2)^\alpha\\
		&\leq & \frac{1}{\rho(x_1)}\cdot C\norm{u}_{\mathcal U^{2+\alpha}(\hat{B}_2)} \rho(x_1)^{2+\alpha} d(\tilde{x}_0,z)^\alpha + C\norm{u}_{\mathcal U^{2+\alpha}(\hat{B}_2)} d(x_1,x_2)^\alpha\\
		&\leq &  C \norm{u}_{\mathcal U^{2+\alpha}(\hat{B}_2)}d(x_1,x_2)^\alpha.
	\end{eqnarray*}
	For the estimate of the second term in the second line above, we check that for each monic monomial $f$ in \eqref{eqn:monic}, there holds
	\begin{equation*}
		\abs{\partial_\rho f(x_1)- \partial_\rho f(x_2)}\leq C d(x_1,x_2)^\alpha.
	\end{equation*}
	For the first term in the third line above, we used (H3).

	Again, the same argument works for all other derivatives in (D2) and (D3) in the case $d(x_1,x_2)<c_\beta\frac{\rho(x_1)}{2}$.

	{\bf If $d(x_1,x_2)\geq c_\beta\rho(x_1)/2$:}
	We study $\partial_\rho u$, $\frac{1}{\rho} \partial_\theta u$ and $\tilde{\triangle}_\beta u$ first. In this case, (H3) implies that
	\begin{equation}\label{eqn:ux1}
		\abs{\partial_\rho (u(x_1)-P_{\tilde{x}_1}(x_1-\tilde{x}_1))}\leq \frac{1}{\rho(x_1)}\abs{\nabla S_{x_1}(u-P_{\tilde{x}_1}(\cdot-\tilde{x}_1))}(\tilde{x}_0) \leq C\norm{u}_{\mathcal U^{2+\alpha}(\hat{B}_2)} \rho(x_1)^{1+\alpha}
	\end{equation}
	and
	\begin{equation}\label{eqn:ux2}
		\abs{\partial_\rho (u(x_2)-P_{\tilde{x}_2}(x_2-\tilde{x}_2))}\leq \frac{1}{\rho(x_2)}\abs{\nabla S_{x_2}(u-P_{\tilde{x}_2}(\cdot-\tilde{x}_2))}(\tilde{x}_0) \leq C\norm{u}_{\mathcal U^{2+\alpha}(\hat{B}_2)} \rho(x_2)^{1+\alpha}.
	\end{equation}
	By checking the monic monomials in \eqref{eqn:monic} one by one, we find that for either $x=x_1$ or $x=x_2$,
	\begin{equation}\label{eqn:ux12}
		\abs{\partial_\rho P_{\tilde{x}}(x-\tilde{x})}\leq C \norm{u}_{\mathcal U^{2+\alpha}(\hat{B}_2)} \rho(x)^\alpha.
	\end{equation}
	By \eqref{eqn:ux1}, \eqref{eqn:ux2} and \eqref{eqn:ux12} and the fact that $\rho(x_2)\leq \rho(x_1)\leq \frac{2}{c_\beta}d(x_1,x_2)$, we get
\begin{eqnarray*}
	\abs{\partial_\rho u(x_1) - \partial_\rho u(x_2)} \leq C\norm{u}_{\mathcal U^{2+\alpha}(\hat{B}_2)} d(x_1,x_2)^\alpha.
\end{eqnarray*}
The same argument works for $\frac{1}{\rho}\partial_\theta u$ and $\tilde{\triangle}_\beta u$.

Now, let's turn to the proof for $\partial_\xi u$, $\partial_\rho\partial_\xi u$, $\frac{1}{\rho}\partial_\theta \partial_\xi u$ and $\partial_\xi^2 u$. We prove for $\partial_\xi^2 u$ only and the same proof works for the other three functions. Similar to \eqref{eqn:decompose}, we have
\begin{equation}
	\partial_\xi^2 u (x) = \partial_\xi^2 (u-P_{\tilde{x}}(\cdot-\tilde{x}))(x) + \partial_\xi^2 P_{\tilde{x}}(x-\tilde{x}).
	\label{eqn:decompose2}
\end{equation}
By \eqref{eqn:decompose2}, we have
\begin{eqnarray*}
	&& \abs{\partial_\xi^2 u(x_1) - \partial_\xi^2 u(x_2)} \\
	&\leq& \abs{\partial_\xi^2 (u-P_{\tilde{x}_1}(\cdot-\tilde{x}_1))(x_1)} + \abs{\partial_\xi^2(u-P_{\tilde{x}_2}(\cdot-\tilde{x}_2))(x_2)} + \abs{\partial_\xi^2 P_{\tilde{x}_1}(x_1-\tilde{x}_1)- \partial_\xi^2 P_{\tilde{x}_2}(x_2-\tilde{x}_2)}.
\end{eqnarray*}
Using (H3) and the inequality $\rho(x_2)\leq \rho(x_1)\leq \frac{2}{c_\beta} d(x_1,x_2)$ as before, we can bound the first two terms by $C \norm{u}_{\mathcal U^{2+\alpha}(\hat{B}_2)} d(x_1,x_2)^\alpha$. It remains to estimate the third term above.
\begin{rem}
	For each $f$ in \eqref{eqn:monic}, the mixed second derivatives $\partial_\rho\partial_\xi$ and $\frac{1}{\rho}\partial_\theta \partial_\xi$ vanish. The proofs are done in these two cases.	
\end{rem}
Again by checking the monic monomials in \eqref{eqn:monic}, we notice that
\begin{equation*}
	\partial_\xi^2 P_{\tilde{x}_i}(x_i-\tilde{x}_i)= \partial_\xi^2 P_{\tilde{x}_i}(0), \qquad i=1,2.
\end{equation*}
The proof will be done, if we can show
\begin{equation}\label{eqn:final}
	\abs{\partial_\xi^2 P_{\tilde{x}_1}(0)- \partial_\xi^2 P_{\tilde{x}_2}(0)}\leq C \abs{\tilde{x}_1-\tilde{x}_2}^\alpha.
\end{equation}
To see this, we define a function $\tilde{u}$ on $\mathcal S\cap \hat{B}_1$ by
\begin{equation*}
	\tilde{u}(\tilde{x})= u(\tilde{x}) = P_{\tilde{x}}(0).
\end{equation*}
By (H2) (restricted to $\mathcal S$ direction), $\tilde{u}$ satisfies the assumptions in Definition \ref{defn:ube} in Section \ref{sec:space}. Proposition \ref{prop:rn} then implies that $\tilde{u}$ is $C^{2,\alpha}$ (on $\Real^{n-2}$) in the usual sense and that
\begin{equation*}
	(\partial_\xi^2 P_{\tilde{x}})(0) = (\partial_\xi^2 \tilde{u})(\tilde{x}),
\end{equation*}
from which \eqref{eqn:final} follows.
\end{proof}
\section{Schauder estimates for $X_\beta$}\label{sec:schauder}
We prove here an estimate that by our perspective should be called the Schauder estimate on $X_\beta$.  Recall that $\mathcal D$ is the set of degrees of all $\mathcal T$-monomials and we have defined generalized H\"older spaces $\mathcal U^q$ only for $q\notin \mathcal D$.
\begin{thm}
	\label{thm:schauder} Assume that $q>0$ and $q,q+2\notin \mathcal D$. Suppose $f\in \mathcal U^{q}(\hat{B}_2)$ and $u$ is a bounded weak solution to
	\begin{equation*}
		\triangle_\beta u =f.
	\end{equation*}
	Then $u$ is in ${\mathcal U}^{q+2}(\hat{B}_1)$ and
	\begin{equation*}
		\norm{u}_{{\mathcal U}^{q+2}(\hat{B}_1)}\leq C\left( \norm{u}_{C^0(\hat{B}_2)} + \norm{f}_{{\mathcal U}^{q}(\hat{B}_2)} \right).
	\end{equation*}
\end{thm}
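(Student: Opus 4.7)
The plan is to mimic the Euclidean argument of Section \ref{sec:another} almost verbatim, but working point by point: at smooth points one uses Theorem \ref{thm:rn} (pulled back by the scaling map $\Psi_x$), while at each singular point one runs a conical analog of Lemma \ref{lem:key}. Concretely, I would verify (H1), (H2) and the form (H3') of (H3) (which is equivalent to (H3) by Remark \ref{rem:final}) for $u$ with the claimed bound.

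The central new ingredient is the following conical version of Lemma \ref{lem:key}: if $f$ is a measurable function on $\hat B_r$ (centered at a point $\tilde x\in \mathcal S$) with $|f(y)|\le \Lambda d(y)^q$, then there exists $v$ solving $\triangle_\beta v=f$ weakly with $|v(y)|\le C\Lambda d(y)^{q+2}$. I would prove it by exactly the annular decomposition used for Lemma \ref{lem:key}: set $f_l=f\cdot \chi_{A_l}$ with $A_l=\hat B_{2^{-l}}\setminus \hat B_{2^{-l-1}}$, use Lemma \ref{lem:poisson} to get a bounded $w_l$ with $\triangle_\beta w_l=f_l$ and $\|w_l\|_{L^\infty}\le C\Lambda 2^{-l(q+2)}$. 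On $\hat B_{2^{-l-1}}$ the function $w_l$ is harmonic, so Proposition \ref{prop:stronghf} gives an $X_\beta$-polynomial $P_l$ approximating $w_l$ up to an error $O(d^{q+2})$ (after cutting at degree $<q+2$), with coefficients controlled by $\Lambda 2^{-l(q+2-\deg)}$. Crucially Proposition \ref{prop:truncation} says $P_l$ is globally harmonic on $X_\beta$, so $u_l:=w_l-P_l$ still satisfies $\triangle_\beta u_l=f_l$ on all of $X_\beta$. The three pointwise estimates of Lemma \ref{lem:ul} transfer (with $|x|$ replaced by $d(x)$) since the interior regularity estimate for harmonic functions on $X_\beta$ applies on balls disjoint from $\mathcal S$, and near $\mathcal S$ Proposition \ref{prop:stronghf} provides the needed vanishing to order $[q]+2$. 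Summing over $l$ as in Section \ref{sec:another} gives $v=\sum_l u_l$ with $|v|\le C\Lambda d(x)^{q+2}$.

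With this tool in hand I would argue as follows. First, on the smooth region $\Omega_{\delta/2}\cap \hat B_1$, (H1) follows by pulling back to $\tilde B$ via $\Psi_x$: the rescaled function $S_x(u)$ satisfies a Poisson equation with right-hand side $\rho(x)^2 S_x(f)$ whose $C^{k,\alpha}(\tilde B)$ norm is controlled by $\|f\|_{\mathcal U^q(\hat B_2)}$ via (H1) and (H3) for $f$, so Theorem \ref{thm:rn} gives $\|S_x(u)\|_{C^{k+2,\alpha}(\tilde B_{1/2})}\le C(\|f\|_{\mathcal U^q}+\|u\|_{C^0})$. Reading the definition of the norm on the pulled-back ball yields both (H1) and (H3') up to constants (after absorbing the $\rho(x)^{q+2}$ factor coming from the scaling of $d$). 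Second, for $x\in\mathcal S\cap \hat B_1$, apply (H2) for $f$ to obtain a $\mathcal T$-polynomial $P_{f,x}$ of degree $<q$ and an $O(d^q)$ remainder $e_{f,x}$. By (P4) pick a $\mathcal T$-polynomial $P_{u,x}$ of degree $<q+2$ with $\triangle_\beta P_{u,x}=P_{f,x}$ and coefficients bounded by a multiple of those of $P_{f,x}$. By the lemma above, choose $e_{u,x}$ of size $O(d^{q+2})$ with $\triangle_\beta e_{u,x}=e_{f,x}$. Then $h_x:=u(x+\cdot)-P_{u,x}-e_{u,x}$ is a bounded harmonic function on a ball, whose $C^0$ norm is bounded by the right-hand side of the theorem; Proposition \ref{prop:stronghf} gives a $\mathcal T$-polynomial $\tilde P$ of degree $<q+2$ and a remainder of order $O(d^{q+2})$, both controlled by $\|h_x\|_{C^0}$. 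Setting $\tilde P_{u,x}:=P_{u,x}+\tilde P$ then verifies (H2) for $u$ with the right bound on coefficients.

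The main obstacle, apart from bookkeeping the constants, is verifying (H3'). For $x\in \mathcal S_\delta\cap \hat B_1$ and $\tilde x$ its projection, one must bound $\|S_x(u-\tilde P_{u,\tilde x}(\cdot-\tilde x))\|_{C^{k+2,\alpha}(\tilde B_{3/8})}$ by $\Lambda\rho(x)^{q+2}$. Here I would: apply the already known (H3) for $f$ together with the $\mathcal T$-polynomial identity $\triangle_\beta P_{u,\tilde x}=P_{f,\tilde x}$ to rewrite the equation satisfied by $w:=u-\tilde P_{u,\tilde x}(\cdot-\tilde x)$ as $\triangle_\beta w=f-P_{f,\tilde x}(\cdot-\tilde x)$, observe via (P1) that the right-hand side, after the pullback $S_x$, is of size $\rho(x)^q$ in $C^{k,\alpha}(\tilde B_{1/2})$, and then invoke the Euclidean Schauder estimate of Theorem \ref{thm:rn} on $\tilde B$ combined with (H2) for $u$ (which gives the $C^0$ bound $\|S_x(w)\|_{C^0(\tilde B)}\le C\Lambda\rho(x)^{q+2}$) to upgrade to the full $C^{k+2,\alpha}$ bound. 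The delicate point is ensuring that the degree-$<q+2$ part of the expansion of $h_x$ produced by Proposition \ref{prop:stronghf} really absorbs into $\tilde P_{u,\tilde x}$ coherently for all $x$ near the same $\tilde x$; this is handled exactly as in Remark \ref{rem:final}, using (P3) to pass from pointwise agreement of $\mathcal T$-polynomials to a uniform $C^{k+2,\alpha}$ bound, and that is where I expect the proof to require the most care.
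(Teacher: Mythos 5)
Your proposal follows essentially the same route as the paper: the conical analog of Lemma \ref{lem:key} proved by the annular decomposition with Lemma \ref{lem:poisson}, Proposition \ref{prop:stronghf} and Proposition \ref{prop:truncation} is exactly the paper's Lemma \ref{lem:hkey}, and the verification of (H2) via (P4) plus that lemma plus the harmonic expansion, and of (H3') via the scaled interior Schauder estimate and Remark \ref{rem:final}, matches the paper's argument step for step. The only cosmetic difference is that the paper disposes of (H1) with a one-line appeal to the usual Schauder estimate, and the vanishing order in the near-singularity estimate is $(q+2)^*$ rather than $[q]+2$, but these do not change the substance.
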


This is the main result of this paper. The key step in its proof is an analog of Lemma \ref{lem:key}.

\begin{lem}
	\label{lem:hkey}
	For $q>0$ and $q+2 \notin D$, if $f$ is $O(d^q)$ on $\hat{B}_r$, then there exists some $u$ that is $O(d^{q+2})$ and satisfies
	\begin{equation*}
		\triangle_\beta u =f
	\end{equation*}
	and
	\begin{equation*}
		[u]_{{O}_{q+2},\hat{B}_r}\leq C [f]_{{O}_q,\hat{B}_r}.
	\end{equation*}
\end{lem}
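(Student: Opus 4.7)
The plan is to adapt Han's dyadic argument from Lemma \ref{lem:key} to $X_\beta$, replacing Taylor polynomials by truncations of the harmonic expansion provided by Proposition \ref{prop:stronghf}, and using Proposition \ref{prop:truncation} to ensure these truncations are globally $\triangle_\beta$-harmonic. Without loss of generality take $r=1$, write $\Lambda_f=[f]_{O_q,\hat B_1}$, and decompose $\hat B_1=\bigcup_{l\geq 0} A_l$ with $A_l=\hat B_{2^{-l}}\setminus \hat B_{2^{-l-1}}$. Setting $f_l=f\chi_{A_l}$, we have $|f_l|\leq \Lambda_f 2^{-lq}$; then Lemma \ref{lem:poisson} yields a bounded weak solution $w_l$ of $\triangle_\beta w_l=f_l$ on $X_\beta$ with $\|w_l\|_{L^\infty(X_\beta)}\leq C\Lambda_f 2^{-l(q+2)}$, and $w_l$ is $\triangle_\beta$-harmonic on the complement of $\overline{A_l}$, in particular on a ball around the origin.

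Next I would exploit the scaling symmetry of $X_\beta$ (under which $\triangle_\beta$ transforms by a scalar factor) to rescale the ball around the origin where $w_l$ is harmonic to the unit ball, and then apply Proposition \ref{prop:stronghf} with threshold $q'$ chosen in the gap $(q+2,q_*)$, where $q_*$ is the smallest element of $\mathcal D$ strictly exceeding $q+2$ (this gap exists because $\mathcal D$ is discrete and $q+2\notin \mathcal D$). Scaling back produces an expansion
\begin{equation*}
w_l(x)=P_l(x)+R_l(x)
\end{equation*}
valid near the origin, where $P_l$ is a finite linear combination of $X_\beta$-monomials of degree $<q+2$ (the choice of $q'$ rules out monomials of degree in $(q+2,q']$), with the coefficient of a monomial of degree $\gamma$ bounded by $C\Lambda_f 2^{-l(q+2-\gamma)}$, and with $|R_l(x)|\leq C\Lambda_f 2^{l(q'-q-2)} d(x)^{q'}$. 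By Proposition \ref{prop:truncation}, $P_l$ is $\triangle_\beta$-harmonic on all of $X_\beta$, so $u_l:=w_l-P_l$ satisfies $\triangle_\beta u_l=f_l$ globally, together with the three regional estimates paralleling Lemma \ref{lem:ul}: (a) $|u_l(x)|\leq C\Lambda_f 2^{l(q'-q-2)} d(x)^{q'}$ near the origin; (b) $|u_l(x)|\leq C\Lambda_f 2^{-l(q+2)}$ on $\hat B_{2^{-l}}$, obtained by combining the $L^\infty$ bound on $w_l$ with the coefficient bounds for $P_l$; (c) $|u_l(x)|\leq C\Lambda_f\bigl[2^{-l(q+2)}+\sum_\gamma 2^{-l(q+2-\gamma)}d(x)^\gamma\bigr]$ on the complement of $\hat B_{2^{-l}}$.

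Finally I would set $u:=\sum_l u_l$ and for $x\in A_{l_0}$ split the sum at $l_0$: using (a) for $l$ sufficiently less than $l_0$ (and (b) to absorb the finitely many boundary indices), one gets $\sum_{l<l_0} |u_l(x)|\leq C\Lambda_f 2^{l_0(q'-q-2)}d(x)^{q'}\leq C\Lambda_f d(x)^{q+2}$, since $d(x)^{q'-q-2}\leq C 2^{-l_0(q'-q-2)}$; the $l=l_0$ term is directly $O(\Lambda_f d(x)^{q+2})$ by (b); and the tail $l>l_0$ is controlled by (c), where each exponent $q+2-\gamma>0$ produces a geometrically convergent series summing to $C\Lambda_f d(x)^{q+2}$. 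Hence $|u(x)|\leq C\Lambda_f d(x)^{q+2}$ on $\hat B_1\setminus\{0\}$, $u$ extends continuously to $0$, and since only finitely many $u_l$ are non-harmonic near any given nonzero point the convergence is smooth enough there to conclude $\triangle_\beta u=f$. The main technical obstacle is extracting the scaled coefficient bound $|a_\epsilon^{(l)}|\leq C\Lambda_f 2^{-l(q+2-\gamma)}$ carefully from Proposition \ref{prop:stronghf} via the $X_\beta$-scaling, and it is precisely here that the hypothesis $q+2\notin \mathcal D$ is indispensable: it produces the gap $(q+2,q_*)$ in which $q'$ can be chosen, ensuring that $P_l$ has no spurious intermediate-degree terms that would destroy the summation bookkeeping.
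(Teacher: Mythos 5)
Your proposal is correct and follows essentially the same route as the paper: the same dyadic decomposition $f=\sum_l f\chi_{\hat A_l}$, the same use of Lemma \ref{lem:poisson} for $w_l$, truncation via Proposition \ref{prop:stronghf} with Proposition \ref{prop:truncation} guaranteeing $P_l$ is globally harmonic, and the same three regional estimates and summation (the paper phrases your threshold $q'$ as $(q+2)^*$, the smallest element of $\mathcal D$ above $q+2$, but this is immaterial). Your closing remark on the role of $q+2\notin\mathcal D$ correctly identifies why the geometric summation over $l<l_0$ closes without a logarithmic loss.
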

The rest of this section consists of two parts. In the first part, we prove Lemma \ref{lem:hkey}, following the proof of Lemma \ref{lem:key} and utilizing many facts about harmonic functions on $X_\beta$ proved in Section \ref{sec:harmonic}. In the second part, we complete the proof of Theorem \ref{thm:schauder}.

\subsection{Proof of Lemma \ref{lem:hkey}}
Without loss of generality, we assume $r=1$.

Setting
\begin{equation*}
	\hat{A}_l:=\hat{B}_{2^{-l}}\setminus \hat{B}_{2^{-l-1}}, \quad \text{for} \quad l=0,1,2,\cdots
\end{equation*}
and
\begin{equation*}
	f_l= f\cdot \chi_{\hat{A}_l},
\end{equation*}
we have
\begin{equation*}
	\abs{f_l}(x)\leq \Lambda_f d(x)^q \chi_{\hat{A}_l}\leq \Lambda_f 2^{-lq}\qquad \text{on} \quad X_\beta,
\end{equation*}
where as before $\Lambda_f=[f]_{\hat{O}_q,\hat{B}_1}$.

Let $w_l$ be the solution to the Poisson equation $\triangle_\beta w_l=f_l$ on $X_\beta$ in Lemma \ref{lem:poisson}, which satisfies that
\begin{equation}
	\sup_{X_\beta} \abs{w_l} \leq C\Lambda_f 2^{-l(q+2)}.
	\label{eqn:hwl}
\end{equation}

Again, $w_l$ is harmonic in $\hat{B}_{2^{-l-1}}$. Let $P_l$ be the $X_\beta$-polynomial in Proposition \ref{prop:stronghf} applied to $w_l$ in $\hat{B}_{2^{-l-1}}$ with $q+2$ replacing $q$.
Our plan is to set
\begin{equation*}
	u_l(x)=w_l(x)-P_l(x)
\end{equation*}
and to show that the series
\begin{equation*}
	u(x)=\sum_{l=0}^\infty u_l(x)
\end{equation*}
converges and gives the solution needed in Lemma \ref{lem:hkey}.
Notice that by Proposition \ref{prop:truncation}, $P_l$ is harmonic on the entire $X_\beta$ and $u_l$ is harmonic outside $\hat{A}_l$.

For that purpose, we need an analog of Lemma \ref{lem:ul}.
\begin{lem}
	\label{lem:hul} 
	There exists a constant $C_q$ depending on $n,\beta$ and $q$ such that

	(i) on $\hat{B}_{2^{-l-1}}$,
	\begin{equation*}
		\abs{u_l(x)} \leq C_q\Lambda_f 2^{( (q+2)^*-(q+2))l}d(x)^{ (q+2)^*};
	\end{equation*}
	Here $(q+2)^*$ is the smallest number in $\mathcal D$ that is larger than $q+2$.

	(ii) on $\hat{A}_l$,
	\begin{equation*}
		\abs{u_l(x)} \leq C_q \Lambda_f 2^{-(q+2)l}
	\end{equation*}
	or equivalently
	\begin{equation*}
		\abs{u_l(x)} \leq C_q \Lambda_f d(x)^{q+2}; 
	\end{equation*}

	(iii) on $\hat{B}_1\setminus \hat{B}_{2^{-l}}$,
	\begin{equation*}
		\abs{u_l(x)} \leq C_q \Lambda_f \left[ 2^{-l(q+2)} + \sum_{2j+\frac{k}{\beta}+ \abs{\sigma}<q+2} 2^{-l(q+2-(2j+\frac{k}{\beta}+\abs{\sigma}) )} d(x)^{2j+\frac{k}{\beta}+\abs{\sigma}} \right].
	\end{equation*}
\end{lem}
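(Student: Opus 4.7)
The plan is to follow the structure of the Euclidean Lemma \ref{lem:ul}, replacing the Taylor expansion of a harmonic function on $B_1\subset\Real^n$ by the $X_\beta$-polynomial expansion of Proposition \ref{prop:stronghf}, and viewing the truncation $P_l$ as a globally harmonic function on $X_\beta$ via Proposition \ref{prop:truncation}. The essential new tool is the cone scaling $\Psi_\lambda(\rho,\theta,\xi)=(\lambda\rho,\theta,\lambda\xi)$, under which $\Psi_\lambda^\ast g_\beta=\lambda^2 g_\beta$ (so distances scale by $\lambda$ and $\triangle_\beta$ gets a factor $\lambda^2$), and under which any $\mathcal T$-monomial of degree $\gamma$ pulls back with factor $\lambda^\gamma$. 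In particular the harmonicity of $w_l$ in $\hat B_{2^{-l-1}}$ is preserved by rescaling with $\lambda=2^{-l-1}$.

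The first step, and the heart of the proof, is to obtain coefficient and remainder bounds for the expansion of $w_l$. I would set $\tilde w_l:=w_l\circ\Psi_{2^{-l-1}}$, so that $\tilde w_l$ is harmonic on $\hat B_1$ with $\|\tilde w_l\|_{C^0(\hat B_1)}\le C\Lambda_f 2^{-l(q+2)}$ by \eqref{eqn:hwl}, and then apply Proposition \ref{prop:stronghf} at the order $(q+2)^\ast$. Since $q+2\notin\mathcal D$ and $(q+2)^\ast$ is by definition the next element of $\mathcal D$, no $\mathcal T$-monomial has degree in $[q+2,(q+2)^\ast)$, so the polynomial produced by the proposition coincides with $P_l\circ\Psi_{2^{-l-1}}$. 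Unscaling and collecting powers of $\lambda=2^{-l-1}$ yields the coefficient bound
\[
|a^\sigma_{j,k}|,|b^\sigma_{j,k}|\le C_q\Lambda_f\,2^{-l(q+2-(2j+\frac{k}{\beta}+|\sigma|))},
\]
the analog of \eqref{eqn:abeta}, together with the remainder bound on $\hat B_{2^{-l-1}}$ that is exactly statement (i). I expect this step to be the main obstacle, because it requires matching the discrete gap $(q+2)^\ast-(q+2)>0$ in $\mathcal D$ with the truncation order of Proposition \ref{prop:stronghf}; it is precisely this gap that converts the $C^0$-decay $2^{-l(q+2)}$ of $w_l$ into the extra vanishing factor $d(x)^{(q+2)^\ast-(q+2)}$ in (i).

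For (ii), the plan is to note that on $\hat B_{2^{-l}}$ one has $\rho(x),|\xi(x)|\le d(x)\le 2^{-l}$, so the estimate $|P_l(x)|\le\sum|a^\sigma_{j,k}|\rho^{2j+\frac{k}{\beta}}|\xi|^{|\sigma|}$, which is a finite sum, telescopes to $C_q\Lambda_f 2^{-l(q+2)}$ via the coefficient bound from the first step. Combining with \eqref{eqn:hwl} gives $|u_l|\le C_q\Lambda_f 2^{-l(q+2)}$ on all of $\hat B_{2^{-l}}$, of which $\hat A_l$ is a subset; the equivalent form in $d(x)$ follows from $d(x)\ge 2^{-l-1}$ on $\hat A_l$. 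For (iii) I would simply use the triangle inequality $|u_l|\le|w_l|+|P_l|$, control $|w_l|$ by \eqref{eqn:hwl}, and estimate $|P_l(x)|$ by the same coefficient bound with the general $d(x)$ in place of $2^{-l}$; this reproduces the displayed sum in (iii) immediately.
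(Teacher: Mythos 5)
Your proposal is correct and follows essentially the same route as the paper: rescale $w_l$ to the unit ball, apply Proposition \ref{prop:stronghf} at order $(q+2)^*$ (using that no $\mathcal T$-monomial has degree in $[q+2,(q+2)^*)$ so the truncation is $P_l$), read off the coefficient bounds and the remainder bound giving (i), and obtain (ii) and (iii) from \eqref{eqn:hwl} together with the coefficient bounds on $P_l$. If anything, you are more explicit than the paper about the estimate of $P_l$ needed for (ii), which the paper's one-line remark glosses over.
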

\begin{proof}
	The following are immediate corollaries of Proposition \ref{prop:stronghf}.
\begin{enumerate}[(a)]
	\item Recall that $P_l(x)$ is the $X_\beta$-polynomial in Proposition \ref{prop:stronghf} applied to $w_l$, namely,
		\begin{equation*}
			P_l(x) = \sum_{2j+\frac{k}{\beta}+\abs{\sigma}<q+2} \rho^{2j+\frac{k}{\beta}} \left( a_{j,k,\sigma}^l \cos k\theta + b_{j,k,\sigma}^l \sin k\theta \right) \xi^\sigma,
		\end{equation*}
		then
		\begin{equation*}
			\abs{a_{j,k,\sigma}^l} + \abs{b_{j,k,\sigma}^l} \leq C\Lambda_f 2^{-l(q+2-(2j+\frac{k}{\beta}+\abs{\sigma}))};
		\end{equation*}
	\item $u_l(x)$ is ${O}(d^{(q+2)^*})$ in $\hat{B}_{2^{-l-1}}$ and
		\begin{equation*}
			[u_l(x)]_{{O}_{(q+2)^{*}},\hat{B}_{2^{-l-1}}}\leq C\Lambda_f 2^{-l(q+2-(q+2)^{*})}.
		\end{equation*}
\end{enumerate}

Given the $C^0$ bound in \eqref{eqn:hwl}, (a) follows from a scaled version of Proposition \ref{prop:stronghf} applied to the harmonic function $w_l$ on $\hat{B}_{2^{-l-1}}$. For (b), we notice that $u_l$ is exactly the $O(d^q)$ term in \eqref{eqn:hfexpansion} if $q$ in Proposition \ref{prop:stronghf} is taken to be $(q+2)^*$ here. The factor $2^{-l(q+2)}$ comes from the $C^0$ bound of $w_l$ \eqref{eqn:hwl} and $2^{(l(q+2)^*)}$ is due to scaling.

With (a) and (b) above, we notice that (1) is the same as (b); (2) is the same as \eqref{eqn:hwl}; (3) is an easy combination of (a) and \eqref{eqn:hwl}.
\end{proof}

With this lemma, a similar computation as in the $\Real^n$ case verifies that the series $\sum_{l=1}^\infty u_l$ converges and therefore gives the solution we need in Lemma \ref{lem:hkey}.

\subsection{Proof of Theorem \ref{thm:schauder}}

By the usual Schauder estimate, to prove an estimate of $u$ in $\mathcal U^{q+2}(\hat{B}_1)$, we do not need to worry about (H1). For (H2), let $x\in \mathcal S\cap \hat{B}_1$, there is a $\mathcal T$-polynomial $P_f$ (order less than $q$) such that
\begin{equation*}
	f(y)=P_f(y-x) + O(d(x,y)^q),\quad \forall y\in \hat{B}_\delta(x).
\end{equation*}
By our choice of $\mathcal T$ (see (P4) in Lemma \ref{lem:PPP}), there is some $\mathcal T$-polynomial $\tilde{P}_u$ (order less than $q+2$) such that
\begin{equation}\label{eqn:puf}
	\triangle_\beta \tilde{P}_u =P_f.
\end{equation}
Notice that $\tilde{P}_u$ is not uniquely determined by $P_f$, since we may add any harmonic $\mathcal T$-polynomial to it.

If we denote the $O(d(x,y)^q)$ term by $e_f(y)$, Lemma \ref{lem:hkey} implies the existence of $e_u(y)$ defined on $\hat{B}_\delta(x)$ such that
\begin{equation*}
	\triangle_\beta e_u(y) = e_f(y)
\end{equation*}
and
\begin{equation*}
	[e_u]_{O_{q+2},\hat{B}_\delta(x)} \leq C [e_f]_{O_q,\hat{B}_{\delta}(x)}.
\end{equation*}
Therefore, $u-\tilde{P}_u(\cdot-x)-e_u$ is a harmonic function $v$ bounded by $C(\norm{u}_{C^0(\hat{B}_2)}+ \norm{f}_{\mathcal U^q(\hat{B}_2)})$. Proposition \ref{prop:stronghf} implies the existence of some $X_\beta$-polynomial $h_u$ of order less than $q+2$ such that
\begin{equation*}
	v(y)=h_u(y-x)+O(d(x,y)^{q+2}).
\end{equation*}
Then (H2) is verified by setting $P_{\tilde{x}}=\tilde{P}_u + h_u$.

For (H3), let $x\in \mathcal S_\delta$ and $\tilde{x}$ be its projection to $\mathcal S$. On one hand, (H2), which is proved above, implies the existence of $P_{\tilde{x}}$ such that
\begin{equation}\label{eqn:qq1}
	\norm{S_x(u-P_{\tilde{x}}(\cdot-\tilde{x}))}_{C^0(\tilde{B})}\leq C \rho(x)^{q+2}.
\end{equation}
On the other hand, by \eqref{eqn:puf} and the definition of $P_{\tilde{x}}$,
\begin{equation*}
	\triangle_\beta (S_x(u-P_{\tilde{x}}(\cdot-\tilde{x}))) = \rho(x)^{2} S_x (\triangle_\beta (u-P_{\tilde{x}}(\cdot-\tilde{x}))) = \rho(x)^{2} S_x (f-P_f(\cdot-\tilde{x})).
\end{equation*}
By (H3) for $f$ and the above equation,
\begin{equation}\label{eqn:qq2}
	\norm{ \triangle_\beta S_x(u-P_{\tilde{x}}(\cdot-\tilde{x}))}_{C^{k,\alpha}(\tilde{B}_{1/2})}\leq C \rho(x)^{q+2}.
\end{equation}
The usual interior Schauder estimate on $\tilde{B}_{1/2}$, together with \eqref{eqn:qq1} and \eqref{eqn:qq2}, implies that
\begin{equation*}
	\norm{S_x(u-P_{\tilde{x}}(\cdot-\tilde{x}))}_{C^{k+2,\alpha}(\tilde{B}_{3/8})}\leq C \rho(x)^{q+2}.
\end{equation*}
Now the proof of Theorem \ref{thm:schauder} is concluded by Remark \ref{rem:final}.

\appendix

\section{Proof of Lemma \ref{lem:rhogood}}

The proof of this lemma consists of a bootstrapping argument of a family of Poisson equations on cone surface, which was used in \cite{yin2016analysis}. More precisely, by Lemma \ref{lem:xigood}, $\partial_\sigma u$ is harmonic, which implies that
\begin{equation*}
	\tag{$E_\sigma$} \tilde{\triangle}_\beta (\partial_\xi^\sigma u)=- \triangle_\xi (\partial_\xi^\sigma u).
\end{equation*}
Here $\tilde{\triangle}_\beta$ is the Laplacian of the two dimensional cone surface $X_\beta^2$, parametrized by $(\rho,\theta)$ and equipped with the cone metric
\begin{equation*}
	g_\beta^2 = d\rho^2 + \beta^2 \rho^2 d\theta^2.
\end{equation*}
Also by Lemma \ref{lem:xigood}, the right hand side of ($E_\sigma$) is bounded by some constant depending on $\sigma$. It then follows that $\partial_\xi^\sigma u$ (with $\xi$ fixed) is H\"older continuous function with respect to the distance of $X_\beta^2$.  To see this, recall that in terms of the $(u,v)$ coordinates, where $u=\rho\cos \theta$ and $v=\rho\sin \theta$, $\tilde{\triangle}_\beta$ is a uniformly elliptic operator with bounded coefficients and hence the H\"older continuity follows from De Giorgi's iteration. (see \cite{yin2016analysis} for detail). This is the starting point of the bootstrapping.

A function $w$ is said to have $\mathcal T_h$-expansion up to order $q>0$ if
\begin{equation*}
	w(\rho,\theta)= \sum_{2j+\frac{k}{\beta}<q} \rho^{2j+\frac{k}{\beta}}(A_{j,k} \cos k\theta + B_{j,k} \sin k\theta) + O(\rho^q)
\end{equation*}
for $\rho<1/2$. The expansion is said to be bounded by $\Lambda$ if the coefficients $A_{j,k}$, $B_{j,k}$ and the constant in the definition of $O(\rho^q)$ are bounded by $\Lambda$.

\begin{rem}\label{rem:diff}
	Note that the expansion above is different from the one used in \cite{yin2016analysis}, where we also included
	\begin{equation*}
		\rho^{2j+\frac{k}{\beta}}\cos m\theta \quad \text{and} \quad \rho^{2j+\frac{k}{\beta}}\sin m\theta, \qquad \text{for} \quad \frac{k-m}{2}\in \mathbb N\cup \set{0}.
	\end{equation*}
	This is because in \cite{yin2016analysis}, we dealt with nonlinear equations, while here we are essentially working with linear equations. The product of harmonic functions is not necessarily harmonic and hence there is no need to require the formal series to be multiplicatively closed.
\end{rem}

The H\"older continuity of $\partial_\xi^\sigma u$ means that $\partial_\xi^\sigma u(\xi)$ has an expansion up to some order $q\in (0,1)$ uniformly (independent of $\xi$) bounded by $\Lambda=\Lambda(q,\sigma)$. The proof of Lemma \ref{lem:rhogood} is now reduced to the following claim. Notice that we prove \eqref{eqn:expansionu} and \eqref{eqn:expansionusigma} simultaneously.

{\bf Claim.} Let $u$ be a bounded solution to
\begin{equation*}
	\tilde{\triangle}_\beta u =f
\end{equation*}
on the unit ball centered at the unique singular point of $X^2_\beta$. If $f$ has a $\mathcal T_h$-expansion up to order $q$ bounded by $\Lambda$ for $q\ne 2j+\frac{k}{\beta}$ for any $k,j\in \mathbb N\cup \set{0}$, then $u$ has a $\mathcal T_h$-expansion up to order $q+2$ bounded by a multiple of $\Lambda$ and $C^0$ norm of $u$ on the ball.

This is nothing but Lemma 6.9 in \cite{yin2016analysis}. The difference pointed out in Remark \ref{rem:diff} does not cause a problem because for the proof, we only require that for each $\mathcal T_h$-polynomial of order $q'$, there exists a $\mathcal T_h$-polynomial of order $q'+2$ that is mapped to the given one by $\tilde{\triangle}_\beta$.

As a final remark, we notice that Lemma 6.9 in \cite{yin2016analysis} relies on Lemma 6.10 there, which is the precursor of Lemma \ref{lem:key} and Lemma \ref{lem:hkey} in this paper. We find the proof of Lemma 6.10 in \cite{yin2016analysis}, which depends on Fourier series, hard to generalize to higher dimensions. The new proof here of course can be used to prove Lemma 6.10.

\bibliography{foo}
\bibliographystyle{plain}

\end{document}